\theoremstyle{plain}
\newtheorem{thm}{Theorem}[section]
\newtheorem{lem}[thm]{Lemma}
\newtheorem{prop}[thm]{Proposition}
\newtheorem{ques}{Question}
\theoremstyle{definition}
\newtheorem{de}[thm]{Definition}
\newtheorem{exam}[thm]{Example}
\theoremstyle{remark}
\newtheorem{rem}[thm]{Remark}
\newtheorem{conj}[thm]{Conjecture}
\def \N {\mathbb N}
\def \Z {\mathbb Z}
\def \R {\mathbb R}
\def \B {\mathcal B}
\def \A {\mathcal A}
\def \F {\mathcal F}
\def \G {\mathcal{G}}
\def \X {\mathcal{X}}
\def \O {\mathcal{O}}
\def \E {\mathbb{E}}
\def \T {\mathbb{T}}
\def \ZZ {\mathcal{Z}}
\def \Q {{\bf Q}}
\def \RP {{\bf RP}}
\def \AP {{\bf AP}}
\def \id {{\rm id}}
\def \Ind {{\bf {\rm Ind}}}
\def \ep {\epsilon}
\def \d {\delta}
\def \lra{\longrightarrow}
\begin{document}

\title{Regionally proximal relation of order $d$ along arithmetic progressions and nilsystems}
\author{Eli Glasner}
\author{Wen Huang}
\author{Song Shao}
\author{Xiangdong Ye\\ (Dedicated To Professor Shantao Liao)}

\address{School of Mathematics, Tel Aviv University, Israel}
\email{eli.glasner@gmail.com}

\address{Wu Wen-Tsun Key Laboratory of Mathematics, USTC, Chinese Academy of Sciences and
School of Mathematics, University of Science and Technology of China,
Hefei, Anhui, 230026, P.R. China.}

\email{wenh@mail.ustc.edu.cn} \email{songshao@ustc.edu.cn}
\email{yexd@ustc.edu.cn}

\subjclass[2010]{Primary: 37B05; 37A99} \keywords{Nilsystems;
Regionally proximal relation; Minimal systems}

\thanks{This research is supported by NNSF of China (11431012, 11971455, 11571335, 11371339).}


\begin{abstract}
The regionally proximal relation of order $d$ along arithmetic
progressions, namely $\AP^{[d]}$ for $d\in \N$, is introduced and investigated. It turns out that if
$(X,T)$ is a topological dynamical system with $\AP^{[d]}=\Delta$, then each ergodic measure
of $(X,T)$ is isomorphic to a $d$-step pro-nilsystem, and thus $(X,T)$ has zero entropy.

Moreover, it is shown that if $(X,T)$ is a strictly ergodic distal system with the property that the maximal
topological and measurable $d$-step pro-nilsystems are isomorphic,
then $\AP^{[d]}=\RP^{[d]}$ for each $d\in\N$. It follows that for a minimal $\infty$-pro-nilsystem,
$\AP^{[d]}=\RP^{[d]}$ for each $d\in\N$. An example which is a strictly ergodic distal system
with discrete spectrum whose maximal equicontinuous factor
is not isomorphic to the Kronecker factor is constructed.
\end{abstract}

\maketitle

\markboth{Regionally proximal relation of order $d$ along a.p.}
{E. Glasner, W. Huang, S. Shao and X. Ye}

\section{Introduction}

This paper is dedicated to the counterpart of the study of multiple ergodic averages in ergodic theory  in the setting of topological dynamics. The regionally proximal relation of order $d$ along arithmetic
progressions, namely $\AP^{[d]}$ for $d\in \N$, is introduced and investigated.

\medskip

In some sense an equicontinuous system is the simplest system in
topological dynamics. In the study of topological dynamics, one of
the first problems was to characterize the equicontinuous structure
relation $S_{eq}(X)$ of a system $(X, T)$, i.e. to find the smallest
closed invariant equivalence relation $R(X)$ on $(X, T)$ such that
$(X/ R(X), T)$ is equicontinuous. A natural candidate for $R(X)$ is
the so-called regionally proximal relation $\RP(X)$ introduced by Ellis and Gottschalk \cite{EG}.
By the definition, $\RP(X)$ is closed, invariant, and reflexive, but
not necessarily transitive. The problem was then to find conditions
under which $\RP(X)$ is an equivalence relation. It turns out to be
a difficult problem. Starting with Veech \cite{V68}, various
authors, including MacMahon \cite{Mc}, Ellis-Keynes \cite{EK}, Bronstein \cite{Br} etc., came
up with various sufficient conditions for $\RP(X)$ to be an
equivalence relation. Note that in our case, $T: X\rightarrow X$ being
homeomorphism and $(X,T)$ being minimal, $\RP(X)$ is always an
equivalence relation. Using the relative version of equicontinuity, Furstenberg \cite{F63} gave the
structure theorem of a minimal distal system, which had a very important influence both in topological dynamics and ergodic theory.

\medskip

The connection between ergodic theory and additive combinatorics was
built in the 1970's with Furstenberg's beautiful proof of
Szemer\'edi's theorem via ergodic theory \cite{F77}. For a measurable system $(X,\X,\mu,T)$,
Furstenberg asked about the convergence (both in the sense of $L^2(\mu)$ and almost surely) of the multiple ergodic averages
\begin{equation}\label{MEA}
 \frac 1 N\sum_{n=0}^{N-1}f_1(T^nx)\ldots
f_d(T^{dn}x),
\end{equation}
where $f_1, \ldots , f_d \in L^\infty(X,\mu)$.
After nearly 30 years' efforts of many researchers, this problem for the case of $L^2$-convergence was
finally solved in \cite{HK05,Z}. 
In their proofs the notion of characteristic factors, introduced by Furstenberg and Weiss, plays a great
role. Loosely speaking, to
understand the multiple ergodic averages $ \frac 1 N\sum_{n=0}^{N-1}f_1(T^nx)\ldots f_d(T^{dn}x)$,
one can replace each function $f_i$ by its conditional expectation
with respect to some $d$-step pro-nilsystem (the $1$-step pro-nilsystem is the Kroneker factor). Thus one can reduce the problem to the study of the same average in a nilsystem.
In \cite{HK05}, some very useful tools, such as dynamical parallelepipeds, ergodic uniformity seminorms,
structure theory involving pro-nilsystems for ergodic systems etc., were introduced
and obtained  (For the details we refer to the recent book by Host and Kra \cite{HK18}).

\medskip
In the topological setting, Host, Kra and Maass \cite{HKM} obtained a topological structure theory involving
pro-nilsystems for all minimal distal systems, which can be viewed as an analog of the purely ergodic structure
theory of \cite{HK05} and the refinement of the Furstenberg's structure theorem for minimal distal systems.
In \cite{HKM}, a certain generalization of the regionally proximal
relation, namely $\RP^{[d]}$ (the regionally proximal relation of order $d$),
is introduced and used to produce the maximal pro-nilfactors. Precisely, in \cite{HKM} it is shown that if a system is minimal and
distal then $\RP^{[d]}$ is an equivalence relation and $(X/\RP^{[d]},T)$ is the maximal $d$-step pro-nilfactor of the system.
The maximal pro-nilfactor of order $d$, namely $(X/\RP^{[d]},T)$ can be seen as the characteristic factor of
the minimal system $(X,T)$.
In \cite{SY} Shao and Ye show that all these results in fact hold for arbitrarily minimal systems
of abelian group actions. In a recent paper by Glasner, Gutman and Ye \cite{GGY-16}, the same question is considered for
a general group $G$, and similar results are proved. Applications of the above structure theorems can be found in \cite{HKM-1,HSY1}.

\medskip

Earlier the counterpart
of characteristic factors in topological dynamics was studied by
Glasner \cite{G94} from a different point of view, where the characteristic
factors for the action $T\times T^2\times \ldots \times T^n$ are considered. To be precise, let $(X,T)$ be a topological system and $d\in \N$.
Let $\sigma_d=T\times T^2\times \ldots\times T^d$. $(Y,T)$ is said to be an {\em topological
characteristic factor of order $d$ } if there exists a dense $G_\d$
set $\Omega$ of $X$ such that for each $x\in \Omega$ the orbit
closure $L=\overline{\O}({x^d}, \sigma_d)$ is ${\pi\times \ldots \times
\pi}$ ($d$ times) saturated, where $x^d=(x,\ldots,x)$ ($d$ times) and $\pi: X\rightarrow Y$ is
the corresponding factor map. That is, $(x_1,x_2,\ldots, x_d)\in L$
iff $(x_1',x_2',\ldots, x_d')\in L$ whenever for all $1\le i\le d$,
$\pi(x_i)=\pi(x_i')$.
In \cite{G94}, it is shown that if $(X,T)$ is a distal minimal system, then its largest class $d$ distal
factor (in the Furstenberg's tower of a minimal distal system) is a topological characteristic factor of order $d$; if $(X,T)$ is a weakly mixing system,
then the trivial system is its topological characteristic factor.

\medskip
It is a long open question whether for a minimal distal system in Glasner's theorem in \cite{G94} one can replace the largest class $d$ distal factor by the maximal pro-nilfactor of order $d$.
Indeed, this is the case when we consider characteristic factors along cubes of minimal systems.
In \cite{CS}, the topological characteristic factors along cubes of minimal systems are studied.
It is shown that up to proximal extensions the pro-nilfactors are the topological characteristic factors
along cubes of minimal systems. In particular, for a distal minimal system,
the maximal $(d-1)$-step pro-nilfactor is the topological cubic
characteristic factor of order $d$ \cite{CS}.

\medskip
In this paper, we try to give another way to study the counterpart
of characteristic factors in topological dynamics.
Note that for a minimal system, the maximal pro-nilfactor of order $d$ is obtained by the
regionally proximal relation of order $d$, i.e. $\RP^{[d]}$. Here we propose a direct approach,
that is, we consider the regionally proximal relation of order $d$ along arithmetic
progressions, namely $\AP^{[d]}$ for $d\in \N$.

\medskip
It turns out that if
$(X,T)$ is a topological dynamical system with $\AP^{[d]}=\Delta$, then each ergodic measure
of $(X,T)$ is isomorphic to a $d$-step pro-nilsystem, and thus $(X,T)$ has zero entropy.
We also show that if $(X,T)$ is a strictly ergodic distal system with the property that the maximal
topological and measurable $d$-step pro-nilsystems are isomorphic,
then $\AP^{[d]}=\RP^{[d]}$ for each $d\in\N$. It then follows that for a minimal $\infty$-pro-nilsystem,
$\AP^{[d]}=\RP^{[d]}$ for each $d\in\N$.
We construct an example $(X,T)$ which is a uniquely ergodic minimal distal system
with discrete spectrum whose maximal equicontinuous factor
is not isomorphic to the Kronecker factor.

\medskip
To finish the introduction we make the following
\begin{conj}
Let $(X,T)$ be a minimal distal system. Then $\AP^{[d]}=\RP^{[d]}$ for any $d\in\N.$
\end{conj}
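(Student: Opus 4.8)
The plan is to establish the two inclusions $\AP^{[d]}\subseteq\RP^{[d]}$ and $\RP^{[d]}\subseteq\AP^{[d]}$ separately, since only the first is routine. Directly from the definitions, a witnessing sequence for $\AP^{[d]}$ along a single direction $n_i$ — that is, $x_i\to x$, $y_i\to y$ with $T^{jn_i}x_i$ and $T^{jn_i}y_i$ approaching a common limit for every $1\le j\le d$ — is nothing but a cube witness in which all $d$ coordinates of the frequency vector are taken equal to $n_i$, the nonempty subset sums of $(n_i,\dots,n_i)$ being exactly the progression $n_i,2n_i,\dots,dn_i$. Hence $\AP^{[d]}\subseteq\RP^{[d]}$ holds on every system, and for $d=1$ both relations reduce to the classical regionally proximal relation, so the conjecture is trivial there. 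The whole content is therefore the reverse inclusion $\RP^{[d]}\subseteq\AP^{[d]}$: one must show that cube-regional-proximality, which uses $d$ independent directions, can always be realized along a single arithmetic progression. (This is consistent with the abstract giving only a \emph{measurable} conclusion from $\AP^{[d]}=\Delta$, as one expects when $\AP^{[d]}$ is the smaller relation.)

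My first attempt would be to reduce to the situation covered by the main theorem of this paper — a strictly ergodic distal system whose maximal topological and measurable $d$-step pro-nilsystems coincide. Two points must be addressed. Since that theorem is stated under strict ergodicity, I would first pass to the ergodic components of $X$, each of which has full support as $X$ is minimal, and try to see that $\RP^{[d]}$ and $\AP^{[d]}$ do not depend on the chosen component. The more serious point is the required coincidence of the topological and measurable maximal $d$-step pro-nilfactors. This is precisely the hypothesis that cannot be assumed in general: the strictly ergodic distal example constructed in this paper, whose maximal equicontinuous factor is not isomorphic to its Kronecker factor, shows that already for $d=1$ the topological and measurable nilstructures can differ. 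Hence the main theorem cannot by itself settle the conjecture, and a genuinely topological argument is needed for those distal systems on which the measurable theory yields strictly less than the topological one.

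For such an argument I would induct along the Furstenberg tower, writing the minimal distal $(X,T)$ as a (transfinite) inverse limit of isometric extensions starting from the one-point system. The base case and the inverse-limit step should be routine, since $\RP^{[d]}$ and $\AP^{[d]}$ are closed invariant relations that behave well under inverse limits and the desired inclusion passes to a limit once it is known at every stage. The heart of the induction is the isometric-extension step. Given an isometric extension $\pi\colon X\to Y$ with the inclusion known on $Y$, take $(x,y)\in\RP^{[d]}(X)$; then $(\pi x,\pi y)\in\RP^{[d]}(Y)\subseteq\AP^{[d]}(Y)$, so there is an arithmetic-progression witness downstairs, and the task is to lift it to a witness for $(x,y)$ upstairs by exploiting the compact group acting isometrically on the fibers and extracting a synchronizing subsequence.

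The hard part will be precisely this lifting. In the cube setting of \cite{HKM,SY} one has $d$ independent frequencies to adjust, and a van der Corput / compactness argument synchronizes all $2^d-1$ corners through the fiber group. In the arithmetic-progression setting only the single parameter $n_i$ is free, yet the $d$ levels $n_i,2n_i,\dots,dn_i$ must be synchronized simultaneously in the fiber; there is far less room, and a plain isometric (equicontinuous) fiber rotation need not provide it. This rigidity is the real obstruction, and it is the topological shadow of the phenomenon exhibited by the constructed example: it is the nilpotent, Abramov-type structure present in pro-nilsystems — not mere isometricity — that lets a single direction suffice, which is why the corollary for $\infty$-pro-nilsystems goes through while the general distal case stays open. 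A successful proof would have to extract from the distal structure enough polynomial behavior along the diagonal direction to replace the $d$ independent cube directions, showing that for minimal distal systems $\RP^{[d]}$ never detects more than a single progression already sees.
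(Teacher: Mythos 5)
You have not proved the statement, and neither does the paper: this item is stated as a \emph{Conjecture}, and the authors say explicitly, immediately after it, ``Unfortunately, we can not achieve this currently.'' So there is no proof in the paper to compare against, and your write-up should be judged as a research plan rather than as a proof. As a plan it is accurate in all the parts that can actually be carried out: the inclusion $\AP^{[d]}\subseteq\RP^{[d]}$ via the constant frequency vector ${\bf n}=(n,\dots,n)$ is exactly the paper's Remark following Definition \ref{arithm}; the case $d=1$ is trivial because $\AP^{[1]}=\RP^{[1]}=\RP(X)$; your observation that the paper's positive result (strictly ergodic distal systems with $Z_d$ isomorphic to $X_d$) cannot settle the conjecture is correct, and the paper's own example of a strictly ergodic distal system with discrete spectrum whose maximal equicontinuous factor differs from its Kronecker factor is precisely the witness that this hypothesis genuinely fails in the distal category; and the inverse-limit step of your induction does go through, by the paper's proposition on inverse limits for $\AP^{[d]}$ together with the projection property of $\RP^{[d]}$.

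The genuine gap is the one you yourself flag: the isometric-extension step of the Furstenberg-tower induction, i.e.\ lifting an arithmetic-progression witness from $Y$ to $X$ through an equicontinuous extension $\pi\colon X\to Y$, is not carried out, and it is the entire content of the conjecture --- everything else is bookkeeping. Your own discussion gives reason to doubt that the step can be done with isometric fiber data alone: a single parameter $n$ must synchronize the $d$ levels $n,2n,\dots,dn$ in the fiber group, and mere equicontinuity of the fibers provides no polynomial mechanism to do this (this is why the paper's proof of its partial theorem does not argue fiberwise at all, but instead uses measure-theoretic tools --- disintegration over the maximal nilfactor, relatively invariant measures with full-support sections, and Theorem \ref{fibre-ap} --- all of which require unique ergodicity and the coincidence $Z_d\cong X_d$). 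So the correct conclusion of your analysis is not a proof but a sharpened statement of the obstruction: any proof of $\RP^{[d]}\subseteq\AP^{[d]}$ for general minimal distal systems must extract progression-witnesses by some mechanism that is invisible both to the measurable structure (because of the example) and to naive fiberwise isometric synchronization. The statement remains open, in your write-up as in the paper.
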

Unfortunately, we can not achieve this currently.

\section{preliminaries}

\subsection{Topological dynamical systems}

A {\em transformation} of a compact metric space X is a
homeomorphism of X to itself. A {\em topological dynamical system}
(t.d.s.) or just a {\em system}, is a pair $(X,T)$, where $X$ is a
compact metric space and $T : X \rightarrow  X$ is a transformation.
We use $\rho (\cdot, \cdot)$ to denote a compatible metric in $X$. In the
sequel, and if there is no room for confusion, {\em in any t.d.s. we will always
use $T$ to indicate the transformation}.

\medskip



A system $(X, T)$ is {\em transitive} if there exists
$x\in X$ whose orbit $\O(x,T)=\{T^nx: n\in \Z\}$ is dense in $X$ and
such point is called a {\em transitive point}. The system is {\em
minimal} if the orbit of every point is dense in $X$. This is equivalent to saying that X and the empty set are the only closed invariant subsets of $X$.

\medskip

Let $(X,T)$ be a system and let $\B(X)$ be the Borel $\sigma$-algebra. Let $\mathcal{M}(X)$ be the set of Borel probability measures in $X$. A measure $\mu\in\mathcal{M}(X)$ is
{\em $T$-invariant} if for every Borel set $B$ of $X$,
$\mu(T^{-1}B)=\mu(B)$. Denote by $\mathcal{M}(X,T)$ the set of
invariant probability measures. A measure $\mu\in\mathcal{M}(X,T)$
is {\em ergodic} if for any Borel set $B$ of $X$ satisfying
$\mu(T^{-1}B\triangle B)=0$ we have $\mu(B)=0$ or $\mu(B)=1$. Denote
by $\mathcal{M}^e(X,T)$ the set of ergodic measures. The system
$(X,T)$ is  {\em uniquely ergodic} if $\mathcal{M}(X,T)$ consists of
only one element, and it is {\em strictly ergodic} if in addition it is minimal.

\medskip

A {\em homomorphism} between the t.d.s. $(X,T)$ and $(Y,T)$ is a
continuous onto map $\pi: X\rightarrow Y$ which intertwines the
actions; one says that $(Y,T)$ is a {\it factor} of $(X,T)$ and that
$(X,T)$ is an {\em extension} of $(Y,T)$. One also refers to $\pi$
as a {\em factor map} or an {\em extension} and one uses the
notation $\pi: (X,T) \rightarrow (Y,T)$. The systems are said to be
{\em conjugate} if $\pi$ is a bijection. An extension $\pi$ is
determined by the corresponding closed invariant equivalence
relation
$$R_{\pi} = \{ (x_{1},x_2): \pi (x_1)= \pi (x_2) \} =(\pi
\times \pi )^{-1} \Delta_Y \subset  X \times X,$$
where $\Delta_Y$ is
the diagonal on $Y$.

\subsection{Distality and Proximality}

Let $(X,T)$ be a t.d.s. A pair $(x,y)\in X\times X$ is a {\it proximal}
pair if
\begin{equation*}
\inf_{n\in \Z} \rho (T^nx,T^ny)=0
\end{equation*}
and is a {\em distal} pair if it is not proximal. Denote by
${\bf P}(X,T)$ or ${\bf P}_X$ the set of proximal pairs of $(X,T)$. The t.d.s.
$(X,T)$ is {\em distal} if $(x,y)$ is a distal pair whenever
$x,y\in X$ are distinct.

\medskip

An extension $\pi : (X,T) \rightarrow (Y,T)$ is  {\em
proximal} if $R_{\pi} \subset {\bf P}(X,T)$ and is {\em distal} if
$R_\pi\cap {\bf P}(X,T)=\Delta_X$. Observe that when $Y$ is trivial (reduced to one point) the map
$\pi$ is distal if and only if $(X,T)$ is distal.

\subsection{Independence}

The notion of \emph{independence} was first introduced and studied in
\cite[Definition 2.1]{KL}. It corresponds to a modification of the notion of
\emph{interpolator} studied in \cite{GW, HY06} and was discussed in depth in
\cite{HLY}.

\begin{de}
Let $(X, T)$ be a t.d.s. Given a tuple $\A=(A_1,\ldots,A_k)$ of
subsets of $X$ we say that a subset $F\subset\Z$ is an {\em
independence set} for $\A$ if for any nonempty finite subset
$J\subset F$ and any $s=(s(j): j\in J ) \in\{1,\ldots,k\}^J$ we have
$$\bigcap\limits_{j\in J}T^{-j}A_{s(j)}\neq\emptyset \ .$$

We shall denote the collection of all independence sets for $\A$ by
$\Ind(A_1,\ldots,A_k)$ or $\Ind\A$.

\end{de}




\begin{de}
Let $(X, T)$ be a t.d.s. A pair $(x_1,x_2) \in X \times X$ is called
an {\em ${\rm Ind}_{ap}$-pair} (ap for arithmetic progression) if for every pair of neighborhoods $U_1$,
$U_2$ of $x_1$ and $x_2$ respectively, and every $d\in \N$ there is some $n\in \N$ such that
for each $(t_1,\ldots, t_d)\in \{1,2\}^d$,
$$T^{-n}U_{t_1}\cap T^{-2n}U_{t_2}\cap \ldots\cap
T^{-nd}U_{t_d}\not=\emptyset.$$

Denote by ${\rm Ind}_{ap}(X,T)$ or $\Ind_{ap}(X)$
the set of all ${\rm Ind}_{ap}$-pairs of $(X,T)$.
\end{de}

\subsection{Dynamical parallelepipeds}

 Let $X$ be a set, and let $d\ge 1$ be an integer. We view element in $\{0, 1\}^d$ as a sequence
$\ep=\ep_1\ldots \ep_d$ of $0'$s and $1'$s.
We denote $X^{2^d}$ by $X^{[d]}$. A point ${\bf x}\in X^{[d]}$ can
be written as
${\bf x} = (x_\ep :\ep\in \{0,1\}^d ). $

\begin{de}
Let $(X, T)$ be a topological dynamical system and let $d\ge 1$ be
an integer. We define $\Q^{[d]}(X)$ to be the closure in $X^{[d]}$
of elements of the form $$(T^{{\bf n}\cdot \ep}x=T^{n_1\ep_1+\ldots
+ n_d\ep_d}x: \ep= \ep_1\ldots\ep_d \in\{0,1\}^d) ,$$ where $ x\in X$ and ${\bf
n} = (n_1,\ldots , n_d)\in \Z^d$. When there is no
ambiguity, we write $\Q^{[d]}$ instead of $\Q^{[d]}(X)$. An element
of $\Q^{[d]}(X)$ is called a (dynamical) {\em parallelepiped of
dimension $d$}.
\end{de}

As examples, $\Q^{[2]}$ is the closure in $X^{[2]}=X^4$ of the set
$$\{(x, T^mx, T^nx, T^{n+m}x) : x \in X, m, n \in \Z\}$$ and $\Q^{[3]}$
is the closure in $X^{[3]}=X^8$ of the set $$\{(x, T^mx, T^nx,
T^{m+n}x, T^px, T^{m+p}x, T^{n+p}x, T^{m+n+p}x) : x\in X, m, n, p\in
\Z\}.$$


Let $(X, T)$ be a system and $d\ge 1$ be an integer. The {\em
diagonal transformation} of $X^{[d]}$ is the map $T^{[d]}:
X^{[d]}\rightarrow X^{[d]}$ defined by $(T^{[d]}{\bf x})_\ep=T x_\ep$
for every ${\bf x}\in X^{[d]}$ and every $\ep\in \{0,1\}^d$.

\begin{de}
{\em Face transformations} are defined inductively as follows: Let
$T^{[0]}=T$, $T^{[1]}_1=\id \times T$. If
$\{T^{[d-1]}_j\}_{j=1}^{d-1}$ is defined already, then set
\begin{equation}\label{def-T[d]}
\begin{split}
T^{[d]}_j&=T^{[d-1]}_j\times T^{[d-1]}_j, \ j\in \{1,2,\ldots, d-1\},\\
T^{[d]}_d&=\id ^{[d-1]}\times T^{[d-1]}.
\end{split}
\end{equation}
\end{de}


The {\em face group} of dimension $d$ is the group $\F^{[d]}(X)$ of
transformations of $X^{[d]}$ spanned by the face transformations.
The {\em parallelepiped group} of dimension $d$ is the group
$\G^{[d]}(X)$ spanned by the diagonal transformation $T^{[d]}$ and the face
transformations $\F^{[d]}(X)$. We often write $\F^{[d]}$ and $\G^{[d]}$ instead of
$\F^{[d]}(X)$ and $\G^{[d]}(X)$, respectively. For $\G^{[d]}$ and
$\F^{[d]}$, we use similar notations to that used for $X^{[d]}$:
namely, an element of either of these groups is written as $S =
(S_\ep : \ep\in\{0,1\}^d)$. In particular, $\F^{[d]} =\{S\in
\G^{[d]}: S_\emptyset ={\rm id}\}$.

\medskip

For convenience, we denote the orbit closure of ${\bf x}\in X^{[d]}$
under $\F^{[d]}$ by $\overline{\F^{[d]}}({\bf x})$, instead of
$\overline{\O({\bf x}, \F^{[d]})}$.
It is easy to verify that $\Q^{[d]}$ is the closure in $X^{[d]}$ of
$$\{Sx^{[d]} : S\in \F^{[d]}, x\in X\}.$$
If $x$ is a transitive point of $X$, then $\Q^{[d]}$ is the orbit closure of $x^{[d]}$ under the group $\G^{[d]}$.

\subsection{Nilmanifolds and nilsystems}

Let $G$ be a group. For $g, h\in G$ and $A,B \subset G$, we write $[g, h] =
ghg^{-1}h^{-1}$ for the commutator of $g$ and $h$ and
$[A,B]$ for the subgroup spanned by $\{[a, b] : a \in A, b\in B\}$.
The commutator subgroups $G_j$, $j\ge 1$, are defined inductively by
setting $G_1 = G$ and $G_{j+1} = [G_j ,G]$. Let $d \ge 1$ be an
integer. We say that $G$ is {\em $d$-step nilpotent} if $G_{d+1}$ is
the trivial subgroup.

\medskip

Let $G$ be a $d$-step nilpotent Lie group and $\Gamma$ be a discrete
cocompact subgroup of $G$. The compact manifold $X = G/\Gamma$ is
called a {\em $d$-step nilmanifold}. The group $G$ acts on $X$ by
left translations and we write this action as $(g, x)\mapsto gx$.
The Haar measure $\mu$ of $X$ is the unique probability measure on
$X$ invariant under this action. Fix $\tau\in G$ and $T$ be the
transformation $x\mapsto \tau x$ of $X$. Then $(X, \mu, T)$ is
called a {\em $d$-step nilsystem}. In the topological setting we omit the measure
and just say that $(X,T)$ is a $d$-step nilsystem. For more details on nilsystems, refer to \cite{HK18}.

\medskip

We will need to use inverse limits of nilsystems, so we recall the
definition of a sequential inverse limit of systems. If
$(X_i,T_i)_{i\in \N}$ are systems with $diam(X_i)\le 1$ and
$\pi_i: X_{i+1}\rightarrow X_i$ are factor maps, the {\em inverse
limit} of these systems is defined to be the compact subset of
$\prod_{i\in \N}X_i$ given by $\{ (x_i)_{i\in \N }: \pi_i(x_{i+1}) =
x_i\}$, and we denote it by
$\lim\limits_{\longleftarrow}(X_i,T_i)_{i\in\N}$. It is a compact
metric space endowed with the distance $\rho((x_{i})_{i\in\N}, (y_{i})_{i\in
\N}) = \sum_{i\in \N} 1/2^i \rho_i(x_i, y_i )$, where $\rho_{i}$ is the metric in
$X_{i}$. We note that the
maps $T_i$ induce naturally a transformation $T$ on the inverse
limit.

\medskip

The following structure theorem characterizes inverse limits of
nilsystems using dynamical parallelepipeds.

\begin{thm}[Host-Kra-Maass]\cite[Theorem 1.2]{HKM}\label{HKM}
Assume that $(X, T)$ is a transitive topological dynamical system
and let $d \ge 2$ be an integer. The following properties are
equivalent:
\begin{enumerate}
  \item If ${\bf x}, {\bf y} \in \Q^{[d]}$ have $2^d-1$ coordinates in common, then ${\bf x} = {\bf y}$.
  \item If $x, y \in X$ are such that $(x, y,\ldots , y) \in  \Q^{[d]}$,
then $x = y$.
  \item $X$ is an inverse limit of $(d-1)$-step minimal
nilsystems.
\end{enumerate}
\end{thm}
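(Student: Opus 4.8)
The plan is to prove the cycle $(1)\Rightarrow(2)\Rightarrow(3)\Rightarrow(1)$, where $(1)\Rightarrow(2)$ is immediate, $(3)\Rightarrow(1)$ is an algebraic verification, and $(2)\Rightarrow(3)$ carries essentially all of the content. For $(1)\Rightarrow(2)$ I would compare a given point with the constant parallelepiped. The constant point $y^{[d]}=(y,\ldots,y)$ always lies in $\Q^{[d]}$. If $(x,y,\ldots,y)\in\Q^{[d]}$, with $x$ in the coordinate indexed by $\emptyset$, then this point and $y^{[d]}$ are two elements of $\Q^{[d]}$ agreeing in all $2^d-1$ coordinates other than $\emptyset$; by $(1)$ they must coincide, so $x=y$.

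For $(3)\Rightarrow(1)$ I would first check the unique-closing property $(1)$ on a single $(d-1)$-step nilsystem $X=G/\Gamma$ and then pass to inverse limits. On a nilsystem one has an explicit description of $\Q^{[d]}(X)$ as an orbit of the Host--Kra cube group acting on $x^{[d]}$, and the indeterminacy of the vertex indexed by $\ep=1\cdots1$, once the remaining $2^d-1$ vertices are fixed, is governed by the top commutator subgroup $G_d$. As $G$ is $(d-1)$-step nilpotent, $G_d$ is trivial, so the final vertex is uniquely determined and $(1)$ holds. For an inverse limit $X=\lim_{\longleftarrow}X_i$, two cubes agreeing in $2^d-1$ coordinates project to such a pair in each factor $X_i$; since $\Q^{[d]}(X)$ is the inverse limit of the $\Q^{[d]}(X_i)$, applying the nilsystem case coordinatewise yields the claim.

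The crux is $(2)\Rightarrow(3)$, which I would attack by induction on $d$. The first task is to show that $(2)$ forces $(X,T)$ to be minimal and distal: a nontrivial proximal pair $(x,y)$ can be propagated along a proximality sequence to produce a parallelepiped of the form $(x,y,\ldots,y)$ with $x\neq y$, contradicting $(2)$, and transitivity together with this rigidity gives minimality. The base case $d=2$ is the classical equicontinuous structure theorem: here $(2)$ says that the regionally proximal relation is trivial, so $X$ is equicontinuous and hence an inverse limit of rotations on compact abelian Lie groups, that is, of $1$-step nilsystems. For the inductive step I would pass to the maximal factor $Z$ of one order lower, namely the quotient by the order-$(d-2)$ regionally proximal relation; by the inductive hypothesis $Z$ is an inverse limit of $(d-2)$-step nilsystems. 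It then remains to analyze the extension $X\to Z$, the point being to show that it is an \emph{abelian group extension} whose defining cocycle is of the appropriate ``type $d$'', and to assemble the translations of $X$ into a $(d-1)$-step nilpotent Lie group acting transitively, so that $X$ is realized as a $(d-1)$-step nilmanifold (up to inverse limits).

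The main obstacle is exactly this last analysis: extracting from the purely combinatorial closing property $(2)$ the algebraic datum of a nilpotent Lie group acting on $X$. This requires the full machinery of \cite{HKM} --- the description of minimal distal systems as towers of isometric extensions, the classification of the cocycles arising at each level, and the rigidity of the cube groups $\F^{[d]}$ and $\G^{[d]}$ acting on $\Q^{[d]}$. I would not expect to bypass this step; in a self-contained treatment the most one can do is reduce cleanly to the cocycle classification and then invoke the nilpotent-group construction of Host--Kra--Maass.
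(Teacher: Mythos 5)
This theorem is not proved in the paper at all: it is quoted verbatim as Theorem 1.2 of Host--Kra--Maass \cite{HKM}, so there is no internal proof to compare your attempt against; the right benchmark is the original HKM argument, and measured against that your outline is essentially faithful. Your $(1)\Rightarrow(2)$ via comparison with the constant cube $y^{[d]}$ is exactly right, and your $(3)\Rightarrow(1)$ is the standard argument: on a $(d-1)$-step nilsystem the set $\Q^{[d]}$ is an orbit of the Host--Kra cube group and the top vertex is determined by the others modulo $G_d=\{e\}$ (one should add that the Euclidean symmetries of the discrete cube $\{0,1\}^d$ reduce the case of an arbitrary missing vertex to the vertex $1\cdots 1$), and cubes of an inverse limit form the inverse limit of the cube sets. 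For $(2)\Rightarrow(3)$ your roadmap --- first distality, hence minimality for a transitive system; then induction on $d$ with the equicontinuous structure theorem as base case; then analysis of the extension over the maximal lower-order pro-nilfactor as an abelian group extension by a cocycle of the appropriate type, assembled into a nilpotent group action --- is precisely the architecture of the HKM proof. Two caveats are worth recording: the distality step is more than ``propagating a proximality sequence'' (one needs enveloping-semigroup idempotents or the IP-structure of proximality times to manufacture the cube $(x,y,\ldots,y)$ in a merely transitive system), and the inductive step presupposes that $\RP^{[d-2]}$ is a closed invariant equivalence relation, itself a nontrivial theorem of \cite{HKM} and \cite{SY}. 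Since you explicitly defer these points and the cocycle classification to the machinery of \cite{HKM}, your proposal is an honest reduction rather than a self-contained proof --- which is consistent with how the paper itself treats the statement, namely as an imported result.
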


A transitive system satisfying one of the equivalent properties
above is called  a {\it $(d-1)$-step pro-nilsystem} or {\it system of order $(d-1)$}.

\subsection{Regionally proximal relation
of order $d$}

\begin{de}
Let $(X, T)$ be a system and let $d\in \N$. The points $x, y \in X$ are
said to be {\em regionally proximal of order $d$} if for any $\d  >
0$, there exist $x', y'\in X$ and a vector ${\bf n} = (n_1,\ldots ,
n_d)\in\Z^d$ such that $\rho (x, x') < \d, \rho (y, y') <\d$, and $$
\rho (T^{{\bf n}\cdot \ep}x', T^{{\bf n}\cdot \ep}y') < \d\
\text{for every nonempty $\ep\subset [d]$}.$$ In other words, there
exists $S\in \F^{[d]}$ such that $\rho (S_\ep x', S_\ep y') <\d$ for
every $\ep\neq \emptyset$. The set of regionally proximal pairs of
order $d$ is denoted by $\RP^{[d]}$ (or by $\RP^{[d]}(X,T)$ in case of
ambiguity), and is called {\em the regionally proximal relation of
order $d$}.
\end{de}

It is easy to see that $\RP^{[d]}$ is a closed and invariant
relation. Observe that
\begin{equation*}
    {\bf P}(X,T)\subseteq  \ldots \subseteq \RP^{[d+1]}\subseteq
    \RP^{[d]}\subseteq \ldots \RP^{[2]}\subseteq \RP^{[1]}=\RP(X,T).
\end{equation*}

The following theorems proved in \cite{HKM} (for minimal distal systems) and
in \cite{SY} (for general minimal systems) tell us conditions under which
$(x,y)$ belongs to $\RP^{[d]}$ and the relation between $\RP^{[d]}$ and
$d$-step pro-nilsystems, which are defined in Theorem \ref{HKM}.

\begin{thm}\label{thm-1}
Let $(X, T)$ be a minimal system and let $d\in \N$. Then
\begin{enumerate}
\item $(x,y)\in \RP^{[d]}$ if and only if $(x,y,\ldots,y)\in \Q^{[d+1]}$
if and only if $(x,y,\ldots,y) \in
\overline{\F^{[d+1]}}(x^{[d+1]})$.

\item $\RP^{[d]}$ is an equivalence relation.

\item $(X,T)$ is a $d$-step pro-nilsystem if and only if $\RP^{[d]}=\Delta_X$.
\end{enumerate}
\end{thm}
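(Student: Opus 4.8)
The plan is to treat part (3) as a consequence of part (1) together with the Host--Kra--Maass theorem, to dispatch reflexivity and symmetry in part (2) directly from the definition, and to isolate transitivity in part (2) as the genuinely hard point. Throughout I would lean on one structural input, established for minimal systems in \cite{HKM,SY}: for minimal $(X,T)$ the cube system $(\Q^{[d]},\G^{[d]})$ is itself minimal, so that the face group $\F^{[d]}$ acts with controlled orbit closures on the fibers of the projection $\Q^{[d]}\to X$, $\mathbf{x}\mapsto x_\emptyset$. This minimality of the cube systems is the engine that lets one move base points of parallelepipeds and thereby pass between the pair-formulation of $\RP^{[d]}$ and the $2^{d+1}$-fold parallelepiped formulation.

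For part (1) I would prove the three conditions equivalent by a short cycle. The inclusion $\ov{\F^{[d+1]}}(x^{[d+1]})\subseteq \Q^{[d+1]}$ is immediate, since $x^{[d+1]}\in\Q^{[d+1]}$ (take $\mathbf{n}=0$) and $\Q^{[d+1]}$ is closed and $\F^{[d+1]}$-invariant; the reverse identification of the two face-orbit descriptions rests on minimality of $(\Q^{[d+1]},\G^{[d+1]})$. The implication $(x,y,\dots,y)\in\Q^{[d+1]}\Rightarrow (x,y)\in\RP^{[d]}$ is direct: approximating $(x,y,\dots,y)$ by a genuine parallelepiped $(T^{\mathbf{m}\cdot\ep}z:\ep\in\{0,1\}^{d+1})$ with $\mathbf{m}=(m_1,\dots,m_{d+1})$, one reads off the witnesses $x'=z$, $y'=T^{m_{d+1}}z$ and $\mathbf{n}=(m_1,\dots,m_d)$, and checks that for every nonempty $\ep'\subseteq[d]$ both $T^{\mathbf{n}\cdot\ep'}x'$ and $T^{\mathbf{n}\cdot\ep'}y'$ lie near $y$. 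For the converse $(x,y)\in\RP^{[d]}\Rightarrow(x,y,\dots,y)\in\Q^{[d+1]}$ I would take $\RP^{[d]}$-witnesses $x_k\to x$, $y_k\to y$, $S^{(k)}\in\F^{[d]}$ and form $(S^{(k)}\times S^{(k)})(x_k^{[d]},y_k^{[d]})$; since $(x_k^{[d]},y_k^{[d]})\in\Q^{[d+1]}$ and $S\times S\in\F^{[d+1]}$ whenever $S\in\F^{[d]}$, any limit $\mathbf{w}$ lies in $\Q^{[d+1]}$ and has lower and upper faces agreeing off the $\emptyset$-corner, with $\emptyset$-corners $x$ and $y$. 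Degenerating the shared off-diagonal coordinates of $\mathbf{w}$ down to the constant value $y$, so as to reach exactly $(x,y,\dots,y)$, is where I would again invoke minimality of the cube system.

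Part (2) is then quick in two cases and hard in the third. Reflexivity follows by taking $x'=y'=x$, and symmetry is built into the definition, which is invariant under swapping $x'$ and $y'$; hence $\RP^{[d]}$ is reflexive and symmetric. Transitivity is the main obstacle. In the distal case one can concatenate two parallelepipeds witnessing $(x,y)$ and $(y,z)$ rather directly, since the absence of nontrivial proximal pairs forces limits to behave rigidly. For a general minimal system I expect to pass to the enveloping (Ellis) semigroup of $(\Q^{[d+1]},\G^{[d+1]})$, fix a minimal left ideal and a minimal idempotent $u$ to absorb the proximal part, and show that $u$ carries the parallelepiped for $(y,z)$ onto one gluing with that for $(x,y)$ along their common $y$-corners, producing $(x,z,\dots,z)\in\Q^{[d+1]}$ and hence $(x,z)\in\RP^{[d]}$. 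This idempotent bookkeeping is the delicate step and the heart of the non-distal argument.

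Finally, part (3) is essentially a restatement. By part (1), $\RP^{[d]}=\Delta_X$ means precisely that $(x,y,\dots,y)\in\Q^{[d+1]}$ forces $x=y$, which is exactly condition (2) of Theorem~\ref{HKM} applied with $d+1$ in place of $d$. That theorem then identifies this with $X$ being an inverse limit of $d$-step minimal nilsystems, i.e. a $d$-step pro-nilsystem, completing the proof.
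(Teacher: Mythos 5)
First, a point of calibration: the paper does not prove Theorem~\ref{thm-1} at all --- it is quoted as a known result, with the distal case attributed to \cite{HKM} and the general minimal case to \cite{SY}. So your proposal has to be measured against those proofs, and against that standard it has a genuine gap, located in the hard direction of part (1). The easy pieces are fine: $(x,y,\ldots,y)\in\Q^{[d+1]}\Rightarrow(x,y)\in\RP^{[d]}$ by reading off witnesses, reflexivity and symmetry in part (2), and the reduction of part (3) to part (1) plus Theorem~\ref{HKM} (which is exactly how the literature does it, since ``$d$-step pro-nilsystem'' is defined by that theorem). The problem is the step you call ``degenerating the shared off-diagonal coordinates of $\mathbf{w}$ down to $y$''. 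Your limit point $\mathbf{w}$ does lie in $\Q^{[d+1]}$ (minimality of $X$ gives $(x_k^{[d]},y_k^{[d]})\in\Q^{[d+1]}$), and it has $\emptyset$-corner $x$, corner $(0,\ldots,0,1)$ equal to $y$, and all remaining coordinates paired equal to uncontrolled points $a_\ep$. To finish you must push every $a_\ep$ to $y$ while holding \emph{both} distinguished corners fixed, and ``minimality of the cube system'' cannot do this: elements of $\G^{[d+1]}$, and limits of them, move all $2^{d+1}$ coordinates, while the face group $\F^{[d+1]}=\{S\in\G^{[d+1]}:S_\emptyset=\id\}$ fixes only the corner carrying $x$ and moves the corner carrying $y$. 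No available group action freezes the two corners you need frozen. This is precisely where \cite{SY} (and \cite{HKM} in the distal case) must work hard: they prove the sharper statement that $(x,y)\in\RP^{[d]}$ forces, for every neighborhood $U$ of $y$, the existence of ${\bf n}\in\Z^{d+1}$ with $T^{{\bf n}\cdot\ep}x\in U$ for all nonempty $\ep$ (from which $(x,y,\ldots,y)\in\ov{\F^{[d+1]}}(x^{[d+1]})$ is immediate), and this rests on an Ellis-semigroup analysis of the face action, including the minimality of $(\ov{\F^{[d]}}(x^{[d]}),\F^{[d]})$ --- the main technical content of those papers. Your sketch compresses all of this into one sentence.

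Conversely, you overestimate transitivity. Once part (1) is available in its strong form, transitivity needs no idempotent bookkeeping: if $(x,y),(y,z)\in\RP^{[d]}$, choose $S_k,R_j\in\F^{[d+1]}$ with $S_ky^{[d+1]}\to(y,z,\ldots,z)$ and $R_jx^{[d+1]}\to(x,y,\ldots,y)$; then for each $k$, $S_k(x,y,\ldots,y)=\lim_jS_kR_jx^{[d+1]}\in\ov{\F^{[d+1]}}(x^{[d+1]})$, while $(S_k)_\emptyset=\id$ and $(S_k)_\ep y\to z$ for $\ep\neq\emptyset$ give $S_k(x,y,\ldots,y)\to(x,z,\ldots,z)$, so $(x,z)\in\RP^{[d]}$ by part (1). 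Thus the genuine difficulty of the theorem lives entirely in part (1), exactly the place your proposal leaves blank, while the step you flag as ``the heart of the non-distal argument'' is in fact a short corollary.
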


\subsection{$\infty$-step pro-nilsystems}
The regionally proximal relation of order $d$ allows to construct the maximal $d$-step
pro-nilfactor of a system.  In \cite{HKM}, it was shown that for a minimal distal system $(X,T)$ the quotient of $X$ under $\RP^{[d]}(X,T)$ is the
maximal $d$-step pro-nilfactor of $X$. In general one has the following:


\begin{thm}\label{thm0}\cite{SY}
Let $\pi: (X,T)\rightarrow (Y,T)$ be a factor map between minimal systems
and let $d\in \N$. Then,
\begin{enumerate}
  \item $\pi\times \pi (\RP^{[d]}(X,T))=\RP^{[d]}(Y,T)$.
  \item $(Y,T)$ is a $d$-step pro-nilsystem if and only if $\RP^{[d]}(X,T)\subset R_\pi$.
\end{enumerate}
In particular, $X_d=X/\RP^{[d]}(X,T)$, the quotient of $(X,T)$ under $\RP^{[d]}(X,T)$, is the
maximal $d$-step pro-nilfactor of $X$. 
\end{thm}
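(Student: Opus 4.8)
The plan is to prove Theorem \ref{thm0} in the order the three assertions are listed, deriving the final ``in particular'' statement as a formal consequence. Throughout I would lean heavily on the characterization of $\RP^{[d]}$ supplied by Theorem \ref{thm-1}, especially part (1): for a minimal system, $(x,y)\in\RP^{[d]}$ iff $(x,y,\ldots,y)\in\Q^{[d+1]}$ iff $(x,y,\ldots,y)\in\overline{\F^{[d+1]}}(x^{[d+1]})$. The key structural fact I would invoke repeatedly is that a factor map $\pi\colon X\to Y$ induces, coordinatewise, a factor map $\pi^{[d+1]}\colon X^{[d+1]}\to Y^{[d+1]}$ which intertwines the face transformations, and hence satisfies $\pi^{[d+1]}(\Q^{[d+1]}(X))=\Q^{[d+1]}(Y)$. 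That last equality is the workhorse: it says cubes map onto cubes, and it follows directly from the definition of $\Q^{[d+1]}$ as a closure of explicit orbit points together with continuity and surjectivity of $\pi$.

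For assertion (1), the inclusion $\pi\times\pi(\RP^{[d]}(X))\subseteq\RP^{[d]}(Y)$ is the easy direction: if $(x,y)\in\RP^{[d]}(X)$ then $(x,y,\ldots,y)\in\Q^{[d+1]}(X)$, so applying $\pi^{[d+1]}$ gives $(\pi x,\pi y,\ldots,\pi y)\in\Q^{[d+1]}(Y)$, whence $(\pi x,\pi y)\in\RP^{[d]}(Y)$ by Theorem \ref{thm-1}(1). The reverse inclusion is the substantive part: given $(y_1,y_2)\in\RP^{[d]}(Y)$, I must lift it to a regionally proximal pair upstairs. Here I would start from $(y_1,y_2,\ldots,y_2)\in\Q^{[d+1]}(Y)$, choose any $x_1\in\pi^{-1}(y_1)$, and use the surjectivity $\pi^{[d+1]}(\Q^{[d+1]}(X))=\Q^{[d+1]}(Y)$ to find a preimage cube in $\Q^{[d+1]}(X)$ projecting to it. The difficulty is that an arbitrary preimage need not have the rigid form $(x_1,x,\ldots,x)$ required to read off a pair in $\RP^{[d]}(X)$; one must massage the preimage, using minimality and the action of the face group $\F^{[d+1]}$ on fibers, so that all but the $\emptyset$-coordinate collapse to a single point $x$ lying in $\pi^{-1}(y_2)$. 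I expect this lifting argument to be the main obstacle, and it is the step where minimality is genuinely used.

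For assertion (2), I would argue as follows. If $(Y,T)$ is a $d$-step pro-nilsystem then $\RP^{[d]}(Y)=\Delta_Y$ by Theorem \ref{thm-1}(3), so (1) gives $\pi\times\pi(\RP^{[d]}(X))=\Delta_Y$, which says precisely that $\RP^{[d]}(X)\subseteq R_\pi$. Conversely, if $\RP^{[d]}(X)\subseteq R_\pi$, then applying $\pi\times\pi$ and using (1) yields $\RP^{[d]}(Y)=\pi\times\pi(\RP^{[d]}(X))\subseteq\pi\times\pi(R_\pi)=\Delta_Y$, so $\RP^{[d]}(Y)=\Delta_Y$ and $(Y,T)$ is a $d$-step pro-nilsystem by Theorem \ref{thm-1}(3). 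This direction is essentially formal once (1) is in hand.

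Finally, for the ``in particular'' claim, set $Y=X_d=X/\RP^{[d]}(X)$ with $\pi$ the quotient map; this is legitimate because $\RP^{[d]}(X)$ is a closed invariant equivalence relation by Theorem \ref{thm-1}(2), so the quotient is a minimal system and $\pi$ is a factor map. By construction $R_\pi=\RP^{[d]}(X)$, so trivially $\RP^{[d]}(X)\subseteq R_\pi$, and assertion (2) shows $X_d$ is a $d$-step pro-nilfactor. For maximality, let $\psi\colon X\to Z$ be any factor map onto a $d$-step pro-nilsystem; by (2) we have $\RP^{[d]}(X)\subseteq R_\psi$, which means $\psi$ factors through $\pi$, exhibiting $Z$ as a factor of $X_d$. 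Hence $X_d$ dominates every $d$-step pro-nilfactor, establishing maximality.
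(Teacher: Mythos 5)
First, a calibration point: the paper does not prove this theorem at all --- it is quoted verbatim from \cite{SY} --- so there is no internal proof to compare against, and your outline has to stand or fall on its own. The parts of it you actually carry out are fine: the inclusion $\pi\times \pi(\RP^{[d]}(X,T))\subseteq \RP^{[d]}(Y,T)$ via the (correct and elementary) identity $\pi^{[d+1]}(\Q^{[d+1]}(X))=\Q^{[d+1]}(Y)$, the two implications in (2), and the derivation of the ``in particular'' clause from (2) together with Theorem \ref{thm-1}(2) are all sound, formal deductions.

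The problem is that the one step you defer --- the inclusion $\RP^{[d]}(Y,T)\subseteq \pi\times\pi(\RP^{[d]}(X,T))$ --- is not a technical detail to be ``massaged''; it is essentially the whole content of the theorem, and your sketch contains no mechanism that would produce the required collapse. Concretely: starting from $(y_1,y_2,\ldots,y_2)\in \overline{\F^{[d+1]}}(y_1^{[d+1]})$, choose $F_k\in\F^{[d+1]}$ with $F_k\,y_1^{[d+1]}\to (y_1,y_2,\ldots,y_2)$, fix $x_1\in\pi^{-1}(y_1)$, and pass to a convergent subsequence of $F_k\,x_1^{[d+1]}$; since face transformations fix the $\emptyset$-coordinate, the limit $\mathbf{z}$ satisfies $z_\emptyset=x_1$ and $\pi(z_\ep)=y_2$ for all $\ep\neq\emptyset$. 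This much is easy, but now the coordinates $z_\ep$, $\ep\neq\emptyset$, are in general \emph{distinct} points of the fiber $\pi^{-1}(y_2)$, and the set of points of $\Q^{[d+1]}(X)$ whose nonempty coordinates all coincide is \emph{not} $\F^{[d+1]}$-invariant (the face transformations move different coordinates by different powers of $T$), so neither compactness nor minimality of the face-group action can be applied naively to push $\mathbf{z}$ into that set. In \cite{SY} exactly this point is where the heavy machinery enters: one needs the enveloping-semigroup results on dynamical cubes for general minimal systems --- in particular the minimality of $(\overline{\F^{[d+1]}}(x^{[d+1]}),\F^{[d+1]})$ and the structural analysis of $\F^{[d+1]}$-minimal subsets of $\Q^{[d+1]}$ --- and these occupy the bulk of that paper (for distal systems the analogous work is done in \cite{HKM}). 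So your proposal correctly localizes the difficulty and correctly reduces everything else to it, but as written it is a plan whose crucial step is missing, not a proof.
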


It follows that for any minimal system $(X,T)$,
$$\RP^{[\infty]}=\bigcap\limits_{d\ge 1} \RP^{[d]}$$
is a closed invariant equivalence relation (we write $\RP^{[\infty]}(X,T)$ in case of ambiguity). Now we formulate the
definition of $\infty$-step pro-nilsystems. 

\begin{de}\cite{D-Y}
A minimal system $(X, T)$ is an {\em $\infty$-step
pro-nilsystem} or {\em a system of order $\infty$}, if the equivalence
relation $\RP^{[\infty]}$ is trivial, i.e. coincides with the
diagonal.
\end{de}

\begin{rem}
Similar to Theorem \ref{thm0}, one can show that the quotient of a
minimal system $(X,T)$ under $\RP^{[\infty]}$ is the maximal
$\infty$-step pro-nilfactor of $(X,T)$.
\end{rem}


\section{the regionally
proximal relation of order $d$ along arithmetic progressions}

Now we introduce the notion of
the regionally
proximal relation of order $d$ along arithmetic progressions.


\subsection{Definition of $\AP^{[d]}(X,T)$}\

\subsubsection{Definition}
In \cite{HKM} $\RP^{[d]}$ was introduced based on
$d$-dimensional parallelepipeds.
Now we define a relation based on Furstenberg's original average.

\begin{de}\label{arithm} Let $(X,T)$ be a t.d.s. and $d\in\N$.
We say that $(x,y)\in X\times X$ is a {\em regionally proximal pair of order
$d$ along arithmetic progressions} if for each $\d>0$ there exist
$x',y'\in X$ and $n\in\Z$ such that $\rho(x, x') < \d,
\rho(y, y') <\d$ and $$\rho(T^{in}(x'),
T^{in}(y'))<\d\ \text{for each}\ 1\le i\le d.$$

The set of all such
pairs is denoted by $\AP^{[d]}(X)$ and is called the {\em regionally
proximal relation of order $d$ along arithmetic progressions}.
\end{de}

For a relation $B$ on $X$ let $\mathbf{R}(B)$ be the smallest closed invariant equivalence generated by $B$.

\begin{rem}

\begin{enumerate}
\item When $d=1$, $\AP^{[1]}(X)$ is nothing but the regionally
proximal relation $\RP(X)$.


\item  Note that for ${\bf n}=(n,n,\ldots,n)\in \Z^d$, one has that $\{{\bf n}\cdot \ep: \ep=(\ep_1,\ep_2,\ldots,\ep_d)\in \{0,1\}^d\setminus \{\emptyset\}\}=\{n,2n,\ldots,dn\}$. It follows  easily that $\AP^{[d]}(X)\subset \RP^{[d]}(X)$ for each $d\in\N$, and hence $\mathbf{R}(\AP^{[d]}(X))\subset \RP^{[d]}(X)$.

\end{enumerate}
\end{rem}

\begin{lem}
Let $k\in \N$ then $\AP^{[d]}(X,T)=\AP^{[d]}(X,T^k).$
\end{lem}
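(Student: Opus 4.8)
The plan is to prove the two inclusions separately; the inclusion $\AP^{[d]}(X,T^k)\subseteq \AP^{[d]}(X,T)$ is immediate, while the reverse inclusion is the substantive one and rests on a pigeonhole argument on residues modulo $k$.

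For $\AP^{[d]}(X,T^k)\subseteq \AP^{[d]}(X,T)$ I would simply unwind the definitions. If $(x,y)\in \AP^{[d]}(X,T^k)$, then for every $\delta>0$ there are $x',y'$ with $\rho(x,x')<\delta$, $\rho(y,y')<\delta$ and some $m\in\Z$ satisfying $\rho((T^k)^{im}x',(T^k)^{im}y')<\delta$ for all $1\le i\le d$. Since $(T^k)^{im}=T^{i(km)}$, the integer $n=km$ witnesses $(x,y)\in \AP^{[d]}(X,T)$, so this direction needs no real work.

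For the reverse inclusion I would first restate membership sequentially: $(x,y)\in \AP^{[d]}(X,T)$ means there are sequences $x_i\to x$, $y_i\to y$ and integers $n_i$ with $\max_{1\le j\le d}\rho(T^{jn_i}x_i,T^{jn_i}y_i)\to 0$ (take $\delta=1/i$ in the definition). The idea is to force the common difference of the witnessing progression to be divisible by $k$. Since each $n_i$ lies in one of the $k$ residue classes modulo $k$, by pigeonhole some residue $r\in\{0,\dots,k-1\}$ occurs infinitely often; passing to that subsequence I may write $n_i=km_i+r$ with $m_i\in\Z$. Then for each $1\le j\le d$,
$$T^{jkm_i}x_i=T^{-jr}\bigl(T^{jn_i}x_i\bigr),\qquad T^{jkm_i}y_i=T^{-jr}\bigl(T^{jn_i}y_i\bigr),$$
so the progression with common difference $km_i$ is recovered from the original one by applying, in coordinate $j$, the fixed homeomorphism $T^{-jr}$.

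The key point—and the only place where any care is needed—is to pass the smallness through these shifts uniformly. Here $j$ ranges over the finite set $\{1,\dots,d\}$, so $\{T^{-jr}:1\le j\le d\}$ is a finite family of homeomorphisms of the compact metric space $X$ and is therefore uniformly equicontinuous: for every $\eta>0$ there is $\delta>0$ with $\rho(p,q)<\delta\Rightarrow \rho(T^{-jr}p,T^{-jr}q)<\eta$ for all $1\le j\le d$ simultaneously. Applying this to $\rho(T^{jn_i}x_i,T^{jn_i}y_i)\to 0$ yields $\max_{1\le j\le d}\rho(T^{jkm_i}x_i,T^{jkm_i}y_i)\to 0$, while still $x_i\to x$ and $y_i\to y$. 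Read with respect to the transformation $T^k$ and common difference $m_i$, this is precisely $(x,y)\in \AP^{[d]}(X,T^k)$, completing the argument. I expect the residue pigeonhole to be the conceptual heart, and the finiteness of the index set $\{1,\dots,d\}$—which is what makes the equicontinuity of the shifts $T^{-jr}$ uniform—to be the detail one must not overlook, since it is exactly the dependence of the shift on the progression index $j$ that would otherwise obstruct the cancellation.
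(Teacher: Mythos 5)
Your proof is correct and takes essentially the same approach as the paper: both arguments replace the witnessing common difference $n$ by a nearby multiple of $k$, absorbing the remainder terms $jr$ into finitely many fixed powers of $T$, and then transfer smallness through these maps by uniform continuity on the compact space $X$. The only cosmetic difference is that you fix a single residue $r$ by pigeonhole (so you need only the $d$ maps $T^{-jr}$), whereas the paper skips the pigeonhole and chooses the tolerance $\delta_1$ uniformly over all $T^j$ with $1\le j\le dk$, handling every residue at once.
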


\begin{proof}
First we note that $\AP^{[d]}(X,T)\supset \AP^{[d]}(X,T^k).$ Now let $(x,y)\in \AP^{[d]}(X,T)$
then each $\d>0$ there exist $x',y'\in X$ and $n\in\Z$ such that $\rho(x, x') < \d$,
$\rho(y, y') <\d$ and $\rho(T^{in}(x'),
T^{in}(y'))<\d_1$ for each $\ 1\le i\le d,$ where $\d_1<\d$ is such that $\rho(z_1,z_2)<\d_1$ implies
$\rho(T^jz_1,T^jz_2)<\d$ for each $1\le j\le dk$. Then we know that $(x,y)\in \AP^{[d]}(X,T^k)$.
\end{proof}

\subsubsection{Comparing $\AP^{[d]}$ with $\RP^{[d]}$}

In order to show that $\RP^{[d]}$ is an equivalence relation in \cite{SY} (see also \cite{HKM})
one proves that $(x,y)\in \RP^{[d]}$ if and only if for each neighborhood $U$ of $y$ there is ${\bf n}=(n_1,\ldots,n_{d+1})\in \Z^{d+1}$
such that $T^{{\bf n}\cdot \ep}(x)\in U$ for each $\ep\not=\emptyset$. Since $\AP^{[1]}=\RP^{[1]}$, it
is natural to ask if for $d=1$ $(x,y)\in \AP^{[1]}$ if and only if for each neighborhood $U$ of $y$ there is
$n\in\N$ such that $T^{n}x, T^{2n}x\in U$. Unfortunately this is not the case as the following example shows.

\medskip

Consider $T:\T^2\lra \T^2, (x,y)\mapsto (x+\alpha, x+y)$, where $\alpha$ is irrational. Then
$T^n(x,y)=(x+n\alpha,y+nx+a(n)\alpha)$ with $a(n)=\frac{1}{2}n(n-1)$. It is easy to see that
$$\RP^{[1]}=\{((x,y_1),(x,y_2)):x,y_1,y_2\in \T\}.$$

Let $y\in \T=[0,1)$ and $y\not=0, \frac{1}{3},\frac{2}{3}$. We claim it is not
true that for each neighborhood $U$ of $(0,y)$ there is $n\in \N$ such that
$T^n(0,0), T^{2n}(0,0)\in U$. Assume that this is the case, i.e. for each
$\ep>0$ there is $n\in\N$ such that
$$-\ep<n\alpha \ ({\text{mod}\ 1})<\ep,\ -\ep<2n\alpha\ ({\text{mod}\ 1})<\ep,$$
and
$$ -\ep<y-a(n)\alpha\ ({\text{mod}\ 1})<\ep,\ -\ep<y-a(2n)\alpha\ ({\text{mod}\ 1})<\ep.$$

A simple calculation shows that $3y=0 \ ({\text{mod}\ 1})$, a contradiction.
Thus, we do not have the property for $\AP^{[1]}$ as for $\RP^{[1]}$.


\subsubsection{A question}

It is easy to check that $\AP^{[d]}(X)$ is a closed $T\times T$
invariant relation. We do not know if it is an equivalence relation, i.e.
\begin{ques}
Is it true that for a minimal t.d.s. $\AP^{[d]}(X)$ is an equivalence relation? If not, is this true when $(X,T)$ is also distal?
\end{ques}

\subsection{Systems with $\AP^{[d]}(X)=X\times X$}\

In this subsection we
show that in some cases we have $\AP^{[d]}(X)=X\times X$.
Glasner studied the diagonal action $\sigma_d=T\times
T^2\times \ldots\times T^d$ and showed that

\begin{thm}\cite{G94} \label{g}
Let $(X,T)$ be a minimal weakly mixing t.d.s. Then for each $d\in\N$
there is a dense $G_\delta$ subset $K_d$ of $X$ such that for each
$x\in K_d$, the orbit of $(x,\ldots,x)$ under $\sigma_d$ is dense in
$X^d$.
\end{thm}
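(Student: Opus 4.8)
The plan is to construct $K_d$ by a Baire category argument and to reduce the whole statement to a single multiple recurrence property that is then extracted from weak mixing. First I would fix a countable base $\{V_k\}_{k\in\N}$ of nonempty open boxes $V_k=U_1^{(k)}\times\cdots\times U_d^{(k)}$ for the topology of $X^d$, and for each $k$ set
$$W_k=\{x\in X:\ \exists\, n\in\N \text{ with } (T^nx,T^{2n}x,\ldots,T^{dn}x)\in V_k\}.$$
Each $W_k$ is open, being the union over $n$ of the open sets $T^{-n}U_1^{(k)}\cap\cdots\cap T^{-dn}U_d^{(k)}$, and by construction $K_d:=\bigcap_k W_k$ is exactly the set of $x$ for which the $\sigma_d$-orbit of $x^d=(x,\ldots,x)$ is dense in $X^d$. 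By the Baire category theorem it then suffices to show that each $W_k$ is dense, which, taking an arbitrary nonempty open $U_0$ as a witness, is precisely the recurrence statement $(\ast)$: for all nonempty open $U_0,U_1,\ldots,U_d\subset X$ there is $n\in\N$ with $U_0\cap T^{-n}U_1\cap\cdots\cap T^{-dn}U_d\neq\emptyset$.

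To prove $(\ast)$ I would first record the two structural consequences of weak mixing that drive the argument: a minimal weakly mixing system is \emph{totally minimal} (a nontrivial cyclic-rotation factor would contradict weak mixing), and every Cartesian power $(X^m,T\times\cdots\times T)$ is transitive. I would then split $(\ast)$ into its recurrence content and its mixing content. The recurrence content is the diagonal case $U_0=U_1=\cdots=U_d=U$, which needs only minimality: by the topological (van der Waerden) multiple recurrence theorem there exist $x$ and $n\ge 1$ with $T^{in}x\in U$ for all $0\le i\le d$. The remaining and essential step is to \emph{decouple} the distinct targets $U_i$, and this is where weak mixing is used in a genuine way. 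The idea is to steer the successive coordinates $T^{n}x,T^{2n}x,\ldots,T^{dn}x$ of the progression independently into $U_1,\ldots,U_d$ for a suitable starting point $x\in U_0$: one works inside the commuting $\Z^2$-action on $X^d$ generated by the diagonal $T\times\cdots\times T$ and by $\sigma_d$, and uses transitivity of all powers together with the structure of the enveloping semigroup (minimal left ideals and idempotents) to pass from the rigid ``same set'' recurrence to arbitrary prescribed $U_i$.

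The main obstacle is exactly this decoupling step, where the rigid arithmetic coupling $n,2n,\ldots,dn$ must be reconciled with the independence supplied by weak mixing; minimality alone yields only the equal-sets case, so weak mixing has to enter essentially to realize distinct targets simultaneously along a single progression. Two routes seem viable for making it rigorous. The first is Glasner's original method of topological ergodic decompositions and quasifactors: one analyzes the minimal subsets of $X^d$ under the $\Z^2$-action generated by $T\times\cdots\times T$ and $\sigma_d$, and shows that weak mixing forces the relevant quasifactor to be trivial, so that $\overline{\O(x^d,\sigma_d)}=X^d$ for a residual set of $x$. The second is to invoke Furstenberg's structure theory and disjointness for weakly mixing systems to supply the product-transitivity needed for the enveloping-semigroup argument above. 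Either way, once $(\ast)$ is established the Baire category argument of the first paragraph produces the dense $G_\delta$ set $K_d$ and completes the proof.
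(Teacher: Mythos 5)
Your Baire-category reduction is fine and standard: the sets $W_k$ are open, $K_d=\bigcap_k W_k$ is exactly the set of points whose forward diagonal $\sigma_d$-orbit is dense, and density of every $W_k$ is equivalent to your statement $(\ast)$. But observe that this reduction is reversible: $(\ast)$ is literally equivalent to the theorem (a transitive diagonal point in $U_0$ produces the required $n$), so after the first paragraph nothing has actually been proven --- the entire content of Glasner's theorem \emph{is} $(\ast)$, and your proposal never proves it. You candidly identify the ``decoupling'' of the distinct targets $U_1,\ldots,U_d$ as the main obstacle, and then offer two ``viable routes'' (Glasner's quasifactor / topological ergodic decomposition machinery, or an enveloping-semigroup argument built on product transitivity) without executing either; naming a method is not supplying an argument. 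Moreover, the structural facts you propose to lean on do not bridge the gap: topological van der Waerden gives only the case $U_0=U_1=\cdots=U_d$; and weak mixing gives transitivity of the powers $T\times\cdots\times T$ of the \emph{same} map, which says nothing direct about the product of \emph{distinct} powers $T\times T^2\times\cdots\times T^d$. Indeed, as the remark following Proposition \ref{wmixing} in this paper points out (citing \cite{HY02}), for a weakly mixing system that is not TE the product $(X,T)\times(X,T^2)\times\cdots\times(X,T^d)$ need not even be weakly mixing, so any correct proof must use minimality and weak mixing jointly in an essential and genuinely nontrivial way --- precisely the step your outline leaves blank.

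For comparison, this paper does not prove the theorem either: it is quoted from \cite{G94}, whose title (``Topological ergodic decompositions and applications to products of powers of a minimal transformation'') indicates where the real work lies --- the proof there is a substantial development of quasifactors, the Ellis semigroup, and topological analogues of the ergodic decomposition, and is not recovered by soft category or recurrence arguments. So your first ``route'' correctly points at the location of the actual proof, but as a self-contained attempt the proposal has a genuine gap: the key recurrence statement $(\ast)$ is asserted, not established.
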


Using this result we have
\begin{prop}\label{wmixing} Let $(X,T)$ be a minimal t.d.s. Then the following
statements are equivalent:
\begin{enumerate}
\item Each pair is an $\Ind_{ap}$-pair, i.e. $\Ind_{ap}(X)=X\times X$.

\item $(X,T)$ is weakly mixing.

\item $\AP^{[d]}(X)=X\times X$ for some $d\ge 2$.
\end{enumerate}
\end{prop}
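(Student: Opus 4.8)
The plan is to prove the equivalence of the three statements by establishing the cycle $(1)\Rightarrow(2)\Rightarrow(3)\Rightarrow(1)$, using Theorem \ref{g} as the bridge from weak mixing to the statements about arithmetic progressions. The implication $(1)\Rightarrow(2)$ should be relatively direct: if every pair is an $\Ind_{ap}$-pair, then in particular for any pair of nonempty open sets $U_1,U_2$ and $d=1$ we obtain some $n\in\N$ with $T^{-n}U_{t_1}\cap T^{-2n}U_{t_2}\neq\emptyset$ for all choices $(t_1,t_2)\in\{1,2\}^2$; taking $t_1=1,t_2=2$ gives $T^{-n}U_1\cap T^{-2n}U_2\neq\emptyset$, so $T^nX$ hits the required intersections, which upon unwinding yields that $T\times T$ has a dense orbit, i.e. weak mixing. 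More carefully, I would use the standard characterization that $(X,T)$ is weakly mixing iff for all nonempty open $U,V$ the set $N(U,V)=\{n:T^nU\cap V\neq\emptyset\}$ is thick, or iff $T\times T$ is transitive, and extract this from the $\Ind_{ap}$ condition.

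For $(2)\Rightarrow(3)$, the plan is to invoke Theorem \ref{g} directly. Fix any $d\ge 2$. By Theorem \ref{g} there is a dense $G_\delta$ set $K_d\subset X$ such that for each $x\in K_d$ the orbit of $x^d=(x,\ldots,x)$ under $\sigma_d=T\times T^2\times\cdots\times T^d$ is dense in $X^d$. I would then show that this density forces every pair to lie in $\AP^{[d]}(X)$. Indeed, given $(x,y)\in X\times X$ and $\d>0$, I pick a point $z\in K_d$ close enough to $x$ and, using the density of the $\sigma_d$-orbit of $z^d$ in $X^d$, find $n\in\Z$ so that $(T^nz,T^{2n}z,\ldots,T^{dn}z)$ is within $\d$ of the constant tuple $(w,w,\ldots,w)$ for a suitably chosen $w$ near $y$; setting $x'=z$ and $y'$ an appropriate nearby point, the inequalities $\rho(T^{in}x',T^{in}y')<\d$ for $1\le i\le d$ follow. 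This shows $\AP^{[d]}(X)=X\times X$.

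The implication $(3)\Rightarrow(1)$ is where I expect the only real subtlety, since it must upgrade the statement about a single $d$ to the multi-coordinate $\Ind_{ap}$ condition for all $d$ simultaneously. The natural route is to first prove $(3)\Rightarrow(2)$, namely that $\AP^{[d]}(X)=X\times X$ for some $d\ge 2$ already forces weak mixing, and then to close the cycle via a separate argument $(2)\Rightarrow(1)$. For $(3)\Rightarrow(2)$, the condition $\AP^{[d]}(X)=X\times X$ means in particular that any pair is regionally proximal of order $d$ along progressions; taking $d\ge 2$ and examining the $i=1,2$ constraints, one gets that for all open $U_1,U_2$ there are approximate return times $n$ with $T^{in}$ moving points of $U_1$ near points of $U_2$ for both $i=1$ and $i=2$, which again yields transitivity of $T\times T$ and hence weak mixing. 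The genuinely delicate point will be $(2)\Rightarrow(1)$: I would need to promote the generic-point density given by Theorem \ref{g} into the combinatorial intersection condition defining $\Ind_{ap}$-pairs, uniformly over all tuples $(t_1,\ldots,t_d)\in\{1,2\}^d$. The key idea is that weak mixing of $(X,T)$ implies weak mixing (indeed minimality-plus-transitivity) of the product system sufficient to run a multiple-recurrence-type argument along the diagonal $\sigma_d$; concretely, for fixed neighborhoods $U_1,U_2$ and fixed $d$, the $2^d$ sets indexed by $\{1,2\}^d$ can be handled by showing the $\sigma_d$-orbit of a generic diagonal point visits the product neighborhood $U_{t_1}\times\cdots\times U_{t_d}$, which is exactly the required nonempty intersection $T^{-n}U_{t_1}\cap\cdots\cap T^{-dn}U_{t_d}\neq\emptyset$. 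Assembling this over all finitely many $t\in\{1,2\}^d$ at once is the crux, and I would use the density of a single generic orbit in the full power $X^{d}$ (not just coordinatewise) provided by Theorem \ref{g} to obtain the simultaneous intersections.
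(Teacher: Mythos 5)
Your skeleton is sound, and two of your implications essentially match the paper: your $(1)\Rightarrow(2)$ is the paper's argument (from the $\Ind_{ap}$ condition one extracts $N(U,U)\cap N(U,V)\neq\emptyset$ for all nonempty open $U,V$ and quotes the known characterization of weak mixing; note your "$d=1$" should be $d=2$), and your $(3)\Rightarrow(2)$ is the paper's observation that $\AP^{[d]}\subset\RP^{[d]}\subset\RP^{[1]}$, so $\AP^{[d]}=X\times X$ forces $\RP(X)=X\times X$ and hence weak mixing. But the two implications where the real work lies both have gaps, and it is the same missing idea in both: you never control the orbit of the \emph{second} point.

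In $(2)\Rightarrow(3)$ you take a generic $z\in K_d$ near $x$, steer $(T^nz,\ldots,T^{dn}z)$ close to a constant tuple $(w,\ldots,w)$ with $w$ near $y$, and then say that "setting $x'=z$ and $y'$ an appropriate nearby point" the inequalities follow. They do not: the definition of $\AP^{[d]}$ requires $\rho(T^{in}x',T^{in}y')<\delta$ for \emph{two} points, and for an arbitrary $y'$ near $y$ nothing whatsoever constrains $T^{in}y'$. The only points whose orbits you control are translates of $z$ itself; the repair is to apply Theorem \ref{g} in dimension $d+1$, obtain $T^{in}z\in B(y,\delta)$ for all $1\le i\le d+1$, and take $x'=z$, $y'=T^nz$, so that both orbits at times $n,\ldots,dn$ lie in $B(y,\delta)$. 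The same omission is fatal at the step you yourself call the crux, $(2)\Rightarrow(1)$: you correctly see that a single $n$ must serve all $2^d$ tuples, but the tool you propose --- density of the $\sigma_d$-orbit of one generic point in $X^{d}$ --- only yields, for each tuple $t$, its own time $n_t$; it cannot synchronize them. The paper's resolution is a concatenation trick in a much larger power: list $\{1,2\}^d=\{S_1,\ldots,S_{2^d}\}$, concatenate into $s\in\{1,2\}^{d2^d}$, and apply Theorem \ref{g} in dimension $d\cdot 2^d$ to get a single $n$ with $T^{in}x\in U_{s_i}$ for all $1\le i\le d2^d$; then for the $k$-th block the translate $T^{(k-1)dn}x$ lies in $\bigcap_{j=1}^{d}T^{-jn}U_{t_j}$ where $S_k=(t_1,\ldots,t_d)$, so one $n$ witnesses all $2^d$ intersections simultaneously. (With $(2)\Rightarrow(1)$ proved this way, $(1)\Rightarrow(3)$ becomes immediate, which is how the paper closes the cycle.) Without this dimension blow-up and the use of translates of a single generic point, your proof of the crux does not close.
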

\begin{proof}(1)$\Rightarrow$(2). Assume that $U_0,U_1$ are
two non-empty open subsets of $X$. Then there is $n\in\N$ such that
$$T^{-n}U_0\cap T^{-2n}U_0\cap T^{-3n}U_0\cap
T^{-4n}U_1\not=\emptyset,$$ which implies that $N(U_0,U_0)\cap
N(U_0,U_1)\not=\emptyset$, and hence $(X,T)$ is weakly mixing.
\medskip



(2)$\Rightarrow$(1). Let $d\ge 1$, $A_d=\{0,1\}^d=\{S_1,\ldots,
S_{2^d}\}$ and $s=S_1S_2\ldots S_{2^d}\in
\{0,1\}^{d2^d}=\{s_1,\ldots,s_{d2^d}\}$. Assume that $U_0,U_1$ are
two non-empty open subsets of $X$. By Theorem  \ref{g} there are $x\in
X$ and $n\in \N$ such that
$$(T^nx,T^{2n}x, \ldots, T^{(d2^d)n}x)\in \prod_{i=1}^{d2^d} U_{s_i}.$$

This implies that for each $(t_1,\ldots, t_d)\in A_d$
$$T^{-n}U_{t_1}\cap T^{-2n}U_{t_2}\cap \ldots\cap
T^{-nd}U_{t_d}\not=\emptyset,$$ that is, each pair is an
$\Ind_{ap}$-pair.


(1)$\Rightarrow$(3) is obvious. To show (3)$\Rightarrow$(1) we
observe that $\RP(X)=X\times X$ which implies weak mixing by well known
results (see, for example, \cite{Au88}).
\end{proof}

\begin{rem} (1). Let $(X,T)$ be a weakly mixing t.d.s. If in addition $(X,T)$ is TE\footnote{TE means topologically ergodic, that is, for all non-empty open sets $U,V\subseteq X$, $N(U,V)$ is syndetic.},
then $(X,T)\times (X,T^2)\times \ldots \times (X,T^d)$ is weakly
mixing (and TE), see \cite[Corollary 4.2]{HY02}. Without the
assumption of TE, this is not true in general. 

\medskip

(2). Let $(X,T)$ be a t.d.s.. If there is a dense $G_\delta$ set
$X_0$ such that for each $x\in X_0$, $(x,x,\ldots,x)$ has a dense
orbit under $\sigma_d$ in $X^d$, then using the method of the
previous theorem we get that $\AP^{[d]}(X)=X\times X$ for each $d\ge
1$.


\end{rem}

To show the following property we need a lemma from \cite{F}.

\begin{lem}\cite[Theorem 1.24]{F}\label{fur}
Let $\N=N_1\cup N_2\cup \ldots \cup N_d$ for some $d\in\N$.
Then there is $1\le i\le d$ such that $N_i$ is piece-wise syndetic.
\end{lem}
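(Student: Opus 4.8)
The plan is to give a short dynamical proof, recasting the finite partition as a coloring and extracting a piecewise syndetic color class from a minimal subset of the associated subshift. Encode the partition by the coloring $c\in \Sigma:=\{1,\ldots,d\}^{\{0,1,2,\ldots\}}$ defined by $c(n)=i$ iff $n\in N_i$, and let $\sigma:\Sigma\to\Sigma$ be the shift $(\sigma z)(n)=z(n+1)$, a continuous surjection of the compact metric space $\Sigma$. Put $X=\ov{\{\sigma^n c:n\ge 0\}}$, the forward orbit closure of $c$. The guiding idea is that some color $i$ will be \emph{uniformly recurrent} along $c$, and that this is exactly piecewise syndeticity of $N_i$.

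First I would produce a uniformly recurrent point. The family of nonempty closed $\sigma$-invariant subsets of $X$ is nonempty (it contains $X$), and the intersection of any decreasing chain of such sets is again nonempty by compactness, closed, and invariant; so by Zorn's lemma there is a minimal such set $M$. Fix $y\in M$. Since $M$ is minimal, $y$ is uniformly recurrent, i.e. for every neighborhood $V$ of $y$ the return time set $\{n\ge0:\sigma^n y\in V\}$ is syndetic. Let $i=y(0)$ and take $V=[i]:=\{z\in X:z(0)=i\}$, a clopen neighborhood of $y$. Then $R:=\{n\ge 0: y(n)=i\}=\{n\ge0:\sigma^n y\in[i]\}$ is syndetic; fix $L$ so that $R$ meets every interval of length $L+1$, which says that within $y$ the color-$i$ positions have bounded gaps.

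Next I would transport this bounded-gap structure back to $c$. Because $y\in X=\ov{\{\sigma^n c:n\ge 0\}}$, every finite initial window of $y$ occurs in $c$: for each $k$ there is $m_k\ge 0$ with $c(m_k+j)=y(j)$ for all $0\le j\le k$. Inside the window $[m_k,m_k+k]$ the positions belonging to $N_i$ are exactly $m_k+(R\cap[0,k])$, which meets every subinterval of length $L+1$. Letting $k\to\infty$ yields arbitrarily long intervals on which $N_i$ has gaps bounded by $L+1$; equivalently $\bigcup_{j=0}^{L}(N_i-j)$ is thick. This is precisely the statement that $N_i$ is piecewise syndetic, completing the proof.

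The step I expect to be the crux is the passage from uniform recurrence to the combinatorial definition, specifically guaranteeing a \emph{single} gap bound $L$ that works for all windows simultaneously; this is exactly what uniform recurrence provides, since $L$ depends only on $y$ and the clopen set $[i]$, not on $k$. The other point requiring care is that windows of an orbit-closure point genuinely reappear in $c$ at nonnegative positions, which is why I work with the one-sided shift and the forward orbit closure. An alternative, even shorter route is algebraic: piecewise syndetic sets are exactly those whose closure in $\beta\N$ meets the smallest two-sided ideal $K(\beta\N)$, and since any ultrafilter in the nonempty ideal $K(\beta\N)$ must contain one of the $N_i$, that $N_i$ is piecewise syndetic. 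I would relegate this to a remark, as it requires the compact right-topological semigroup machinery, whereas the subshift argument above is self-contained.
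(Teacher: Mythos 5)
Your proof is correct, but note that the paper itself does not prove this lemma at all: it is quoted directly from Furstenberg's book \cite[Theorem 1.24]{F}, so the only comparison to be made is with that cited source, and your subshift argument is essentially the classical dynamical proof of this partition result (orbit closure of the coloring, a minimal subset via Zorn's lemma, uniform recurrence of its points, and transfer of finite windows back to the original sequence). Each step checks out: minimal closed invariant subsets of the one-sided orbit closure exist by Zorn's lemma; every point of a minimal set is uniformly recurrent, and the usual compactness argument for this works equally well for the non-invertible one-sided shift; the gap bound $L$ coming from the cylinder $[i]$ depends only on $y$, not on the window length $k$; and your final computation shows $[m_k,\,m_k+k-L]\subseteq\bigcup_{j=0}^{L}(N_i-j)$, so that union is thick, which is exactly piecewise syndeticity of $N_i$. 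One small repair is needed at the outset: the sets $N_1,\ldots,N_d$ are only assumed to cover $\N$, not to be pairwise disjoint, so ``$c(n)=i$ iff $n\in N_i$'' is not well defined as stated. Set instead $c(n)=\min\{i:\,n\in N_i\}$; then $c(n)=i$ implies $n\in N_i$, which is the only implication your window argument uses, and since any superset of a piecewise syndetic set is piecewise syndetic, the conclusion for the disjointified cell yields it for $N_i$ itself. Your closing $\beta\N$ remark is also a valid alternative route (an ultrafilter in the minimal ideal $K(\beta\N)$ must contain some cell of a finite cover, and a set is piecewise syndetic precisely when its closure in $\beta\N$ meets $K(\beta\N)$); relegating it to a remark is sensible, since it imports substantially heavier machinery than the self-contained symbolic argument.
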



\begin{thm} Let $\pi:(X,T)\lra (Y,T)$ be a proximal extension between two t.d.s.
Then $\AP^{[d]}(X)\supset R_\pi=\{(x,y)\in X^2:\pi(x)=\pi(y)\}$ for any $d\in\N$. In particular, if $(X,T)$ is proximal,
then $\AP^{[d]}(X)=X\times X$ for any $d\in\N$.
\end{thm}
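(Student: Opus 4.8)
The first move is a reduction. Taking $Y$ to be a one--point system makes $\pi$ a proximal extension exactly when $(X,T)$ is proximal, so the ``in particular'' clause is the special case $R_\pi=X\times X$; hence it suffices to prove the inclusion $R_\pi\subseteq\AP^{[d]}(X)$ in general. Since $\pi$ is a proximal extension, $R_\pi\subseteq{\bf P}(X,T)$, and therefore it is enough to establish the stronger statement ${\bf P}(X,T)\subseteq\AP^{[d]}(X)$ for every $d\in\N$. So I would fix a proximal pair $(x,y)$ and $\d>0$, and try to produce $x',y'$ with $\rho(x,x')<\d$, $\rho(y,y')<\d$ together with some $n$ such that $\rho(T^{in}x',T^{in}y')<\d$ for all $1\le i\le d$.

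Next I would extract the recurrence hidden in proximality. Because $(x,y)$ is proximal there is a minimal idempotent $u$ in the enveloping semigroup with $ux=uy=:z$; the point $z$ is then uniformly recurrent, and along a sequence $m_k$ one has $T^{m_k}x\to z$ and $T^{m_k}y\to z$. In particular, for every $\ep>0$ the return set $D_\ep=\{n\in\Z:\rho(T^nx,T^ny)<\ep\}$ is large: it is a member of $u$, hence central, and so by Lemma \ref{fur} (via the standard dynamical reformulation of van der Waerden's theorem) it is piece-wise syndetic and contains arbitrarily long arithmetic progressions.

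The heart of the matter is to upgrade the bare closeness guaranteed at the times $m_k$ into closeness along a genuine dilation pattern $\{n,2n,\dots,dn\}$. Here I would invoke topological multiple recurrence in the minimal system $M=\overline{\O(z,T)}$, whose combinatorial engine is precisely Lemma \ref{fur}, to obtain for each prescribed $\eta>0$ an integer $n$ along which a point of $M$ arbitrarily close to $z$ returns $\eta$--close to itself at all of $n,2n,\dots,dn$ simultaneously; equivalently, I would locate inside $D_\ep$ an arithmetic progression $\{a,a+n,a+2n,\dots,a+dn\}$, so that $T^{a+in}x$ and $T^{a+in}y$ are $\ep$--close for $0\le i\le d$. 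Setting $x'=T^ax$ and $y'=T^ay$ then gives $\rho(T^{in}x',T^{in}y')<\ep$ for $1\le i\le d$, the required configuration for the pair $(x',y')$.

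The main obstacle, and the step that forces the use of Lemma \ref{fur} rather than the soft argument of the kind that yields ${\bf P}(X,T)\subseteq\RP^{[d]}(X)$, is the rigidity of the single dilation parameter: unlike for $\RP^{[d]}$, where the $d$ time increments may be chosen independently and all driven to infinity along the proximal approach, here the times are forced to be $n,2n,\dots,dn$. Consequently the progression produced above is anchored at its base $a$, which need not be a near--return time of the original pair, so $x'=T^ax$ and $y'=T^ay$ need not be $\d$--close to $x$ and $y$. Reconciling the dilation pattern with the \emph{location} of the proximal approach is therefore the crux: one must arrange the base of the progression to be a recurrence time of $(x,y)$ (immediate when $x,y$ are uniformly recurrent, for instance in the minimal case, by intersecting the syndetic set of return times with the van der Waerden structure of $D_\ep$), while in general one would have to exploit the $T\times T$--invariance and closedness of $\AP^{[d]}(X)$ together with the approach $T^{m_k}(x,y)\to(z,z)$ to transport the configuration back to a neighbourhood of $(x,y)$. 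I expect this re--anchoring to be the delicate part of the argument.
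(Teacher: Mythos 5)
Your first move---reducing to the ``stronger statement'' ${\bf P}(X,T)\subseteq\AP^{[d]}(X)$---is where the proposal breaks down. That statement is not a convenient strengthening: it is precisely the question the paper leaves open immediately after this theorem (whether every proximal pair of a minimal system lies in $\AP^{[d]}$), and the reduction throws away exactly the hypothesis that makes the theorem provable. Membership of $(x,y)$ in $R_\pi$ for a proximal \emph{extension} gives far more than proximality of that one pair: since $R_\pi$ is closed and $T\times T$-invariant, the entire orbit closure of $(x,y)$ under $T\times T$ lies in $R_\pi\subseteq {\bf P}(X,T)$, so every minimal (almost periodic) point of that orbit closure is simultaneously almost periodic and proximal, hence lies on the diagonal. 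This is what the paper's proof uses: assuming $(x,y)\notin\AP^{[d]}$, the separation sets $E_i=\{m\in\N:\rho(T^{im}x,T^{im}y)\ge\ep\}$ cover $\N$, Lemma \ref{fur} makes some $E_i$ piecewise syndetic, hence $E_1\supseteq iE_i$ is piecewise syndetic, and a piecewise syndetic set of $\ep$-separation times forces a minimal point of the orbit closure \emph{off} the diagonal---a contradiction. For a bare proximal pair no such contradiction is available: in the full shift $(\{0,1\}^{\Z},\sigma)$ one can interleave blocks of $0$'s (shared by $x$ and $y$) with blocks copied from a Sturmian point $z$ in $x$ and from $\sigma^k z$ in $y$, producing a proximal pair $(x,y)$ with $(\sigma\times\sigma)^{m_j}(x,y)\to(z,\sigma^k z)$, a non-diagonal minimal point of the product. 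So nothing confines the minimal points in the orbit closure of a merely proximal pair to the diagonal, and your strategy cannot run.

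The ``re-anchoring'' issue you flag is indeed the fatal point, and neither of your proposed repairs works. In the minimal case: a proximal pair with $x\neq y$ is \emph{never} an almost periodic point of $(X\times X,T\times T)$ (its orbit closure meets the diagonal, and a minimal set meeting the diagonal is contained in it), so there is no syndetic set of simultaneous return times of the pair to intersect with the van der Waerden structure of $D_\ep$; the return sets of $x$ and of $y$ separately are syndetic, but two syndetic sets can be disjoint. In the general case: closedness and invariance of $\AP^{[d]}$ only propagate \emph{forward} along orbits, so knowing $T^{m_k}(x,y)\to(z,z)$ and that the diagonal point $(z,z)$ lies in $\AP^{[d]}$ (trivially true) says nothing about $(x,y)$ itself. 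Finally, no purely combinatorial largeness of $D_\ep$ can close the gap: van der Waerden gives progressions $\{a,a+n,\dots,a+dn\}$ with an uncontrolled base $a$, whereas what is needed is the anchored dilation pattern $\{n,2n,\dots,dn\}\subseteq D_\ep$, and even syndetic sets (e.g.\ $1+3\Z$, which contains no pair $\{n,2n\}$) can avoid all such patterns. The paper obtains the dilation pattern only through the dynamical contradiction argument sketched above, which in turn requires the full proximal-extension hypothesis rather than proximality of a single pair.
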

\begin{proof} Assume to the contrary that there are $d\in\N$ and a pair $(x_1,x_2)\in
R_\pi$ but $(x_1,x_2)\not\in \AP^{[d]}.$ This implies that there is
$\ep_0>0$ such that if $0<\ep\le \ep_0$ then for each $m\in\N$ there
is $1\le i\le d$ such that $$\rho(T^{im}x_1,T^{im}x_2)\ge \ep.$$ Let
$$E_i=\{m\in\N:\rho(T^{im}x_1,T^{im}x_2)\ge \ep\}, 1\le i\le d.$$ Then
$\N=E_1\cup\ldots\cup E_d$, and then by Lemma \ref{fur} there is $1\le i\le d$
such that $E_i$ is piece-wise syndetic. This implies that
$E_1\supset iE_i$ is piecewise syndetic. Thus the orbit closure of
$(x_1,x_2)$ under $T\times T$ contains a minimal point which is not on the diagonal, a
contradiction, since $\pi$ is proximal.
\end{proof}

To finish the section we ask

\begin{ques} Let $(X,T)$ be a minimal system and assume that $(x,y)$ is proximal.
Is it true that $(x,y)\in \AP^{[d]}$ for each $d\in\N$?
\end{ques}

\subsection{Factors and extensions}

The following property follows directly by the definition.
\begin{prop}
Let $\pi:(X,T)\lra (Y,T)$ be a factor map between two
systems. Then $\pi\times \pi (\AP^{[d]}(X))\subset\AP^{[d]}(Y)$.
\end{prop}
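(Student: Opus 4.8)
The plan is to show directly from the definitions that a factor map $\pi$ pushes forward the relation $\AP^{[d]}(X)$ into $\AP^{[d]}(Y)$. Since $\pi$ is a continuous map between compact metric spaces, it is uniformly continuous, and this is the only analytic input I expect to need. Let me write $\rho_X$ and $\rho_Y$ for compatible metrics on $X$ and $Y$ respectively.

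First I would fix a pair $(x,y)\in \AP^{[d]}(X)$ and aim to verify that $(\pi(x),\pi(y))\in \AP^{[d]}(Y)$ by checking the defining condition for an arbitrary $\d>0$. The key preparatory step is to invoke uniform continuity of $\pi$: given $\d>0$, choose $\eta>0$ so that $\rho_X(a,b)<\eta$ implies $\rho_Y(\pi(a),\pi(b))<\d$ for all $a,b\in X$. Then I would apply the hypothesis $(x,y)\in\AP^{[d]}(X)$ with this $\eta$ in place of $\d$, obtaining points $x',y'\in X$ and an integer $n\in\Z$ with $\rho_X(x,x')<\eta$, $\rho_X(y,y')<\eta$, and $\rho_X(T^{in}x',T^{in}y')<\eta$ for each $1\le i\le d$.

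The conclusion then follows by setting $\bar x=\pi(x')$ and $\bar y=\pi(y')$ and using that $\pi$ intertwines the transformations, i.e. $\pi(T^{in}x')=T^{in}\pi(x')=T^{in}\bar x$ and similarly for $y'$. Applying the choice of $\eta$ to each of the relevant inequalities gives $\rho_Y(\pi(x),\bar x)<\d$, $\rho_Y(\pi(y),\bar y)<\d$, and $\rho_Y(T^{in}\bar x,T^{in}\bar y)=\rho_Y(\pi(T^{in}x'),\pi(T^{in}y'))<\d$ for each $1\le i\le d$. Thus $\bar x,\bar y\in Y$ and $n\in\Z$ witness that $(\pi(x),\pi(y))$ satisfies Definition \ref{arithm} for the given $\d$. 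Since $\d>0$ was arbitrary, $(\pi(x),\pi(y))\in\AP^{[d]}(Y)$, which is exactly $\pi\times\pi(\AP^{[d]}(X))\subset\AP^{[d]}(Y)$.

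I do not anticipate a genuine obstacle here, since the statement is flagged in the paper as following \emph{directly by the definition}; the only point requiring a small amount of care is the order of quantifiers, namely that the neighborhood scale $\eta$ downstairs must be chosen \emph{before} invoking the hypothesis upstairs so that all four families of inequalities transfer simultaneously through the single uniform-continuity modulus. One should also note that the perturbed points $x',y'$ chosen upstairs automatically supply perturbed points $\pi(x'),\pi(y')$ downstairs, so no separate lifting argument is needed; surjectivity of $\pi$ is not even required for this inclusion, only continuity and equivariance.
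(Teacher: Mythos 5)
Your proof is correct and is exactly the routine verification the paper has in mind: the paper offers no written proof, stating only that the proposition ``follows directly by the definition,'' and your argument (uniform continuity of $\pi$ to pick $\eta$ for the given $\d$, then pushing the witnesses $x',y',n$ forward using equivariance $\pi\circ T=T\circ\pi$) fills in those details properly, including the correct order of quantifiers. Your closing remarks are also accurate: surjectivity of $\pi$ plays no role in this inclusion, only continuity and the intertwining property.
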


Generally we do not have $\pi\times \pi
(\AP^{[d]}(X))=\AP^{[d]}(Y)$. For example, let $(X_1,T_1)$ and
$(X_2,T_2)$ be two non-trivial proximal t.d.s. with $X_1\cap
X_2=\emptyset$ and $X=X_1\cup X_2$. Assume that $T:X\lra X$ is such
that $T(x)=T_i(x)$ if $x\in X_i$. Then $(X,T)$ has two minimal
points. It is clear that $\AP^{ [d]}(X,T)=X_1\times X_1\cup
X_2\times X_2\not=X\times X$. Let $(Y,S)$ be the t.d.s. obtained by
collapsing the two minimal points. Then $\AP^{[d]}(Y)=Y\times Y$
since $(Y,S)$ is proximal. Choose $(y_1,y_2)\in Y\times Y$ such that
$y_i$ is not minimal and $y_i\in X_i$. It is clear that
$(y_1,y_2)\not \in \AP^{[d]}(X)$.

\begin{ques}
Let $\pi:(X,T)\lra (Y,T)$ be a factor map between two
minimal systems. Is it true that $\pi\times \pi
(\AP^{[d]}(X))=\AP^{[d]}(Y)$?
\end{ques}

\begin{prop} Let $(X,T)$ be the inverse limit of $(X_i,T_i)$ with
bonding maps $\pi_i:X_{i+1}\lra X_i$. If $x=(x_1,x_2,\ldots),
y=(y_1,y_2,\ldots)\in X$ are such that $(x_i,y_i)\in \AP^{[d]}(X_i)$,
then $(x,y)\in\AP^{[d]}(X)$.
\end{prop}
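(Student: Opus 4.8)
The plan is to work directly from the definition of $\AP^{[d]}$, exploiting the explicit metric on the inverse limit. Recall that a point of $X=\lim\limits_{\longleftarrow}(X_i,T_i)$ is a sequence $x=(x_1,x_2,\ldots)$ with $\pi_i(x_{i+1})=x_i$, and the metric is $\rho(x,y)=\sum_{i\in\N}2^{-i}\rho_i(x_i,y_i)$. Given $(x_i,y_i)\in\AP^{[d]}(X_i)$ for every $i$, I want to produce, for each $\d>0$, approximating points $x',y'\in X$ and an integer $n$ witnessing that $(x,y)\in\AP^{[d]}(X)$.

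First I would fix $\d>0$ and choose a level $N$ large enough that the tail of the series is negligible, i.e. $\sum_{i>N}2^{-i}<\d/4$ (or a similar threshold chosen so the final estimates close). The point is that to control $\rho$ on $X$ up to error $\d$ it suffices to control the first $N$ coordinates. Next I would apply the hypothesis at the single coordinate level $N$: since $(x_N,y_N)\in\AP^{[d]}(X_N)$, there exist $x_N',y_N'\in X_N$ and $n\in\Z$ with $\rho_N(x_N,x_N')<\d'$, $\rho_N(y_N,y_N')<\d'$ and $\rho_N(T_N^{in}x_N',T_N^{in}y_N')<\d'$ for all $1\le i\le d$, where $\d'$ is a small constant to be calibrated against $\d$. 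The key structural observation is that all lower coordinates are recovered from the $N$th by the (uniformly continuous) bonding maps: $x_j=\pi_j\circ\cdots\circ\pi_{N-1}(x_N)$ for $j<N$, and these bonding maps intertwine the dynamics, so $T_j^{in}$ applied at level $j$ is the image under the bonding projection of $T_N^{in}$ applied at level $N$. Hence a single good approximation at level $N$ simultaneously controls all coordinates $j\le N$.

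The remaining issue is genuinely the main obstacle: the approximating points $x_N',y_N'$ I obtain live in the factor $X_N$, but to conclude $(x,y)\in\AP^{[d]}(X)$ I need approximating points $x',y'$ that are honest points of the inverse limit $X$, i.e. coherent sequences. To fix this I would lift $x_N'$ and $y_N'$ to $X$. The cleanest route is to use surjectivity of the projection $X\to X_N$ (valid because each bonding map, and hence each projection from the inverse limit, is onto) to pick any $x',y'\in X$ whose $N$th coordinates are $x_N',y_N'$. Their lower coordinates are then automatically the bonding images of $x_N',y_N'$, so they remain $\d$-close to $x,y$ in the first $N$ coordinates by uniform continuity of the finitely many composed bonding maps (choose $\d'$ small enough for this), and $\d/4$-close in the tail by the choice of $N$; the same estimates bound $\rho(T^{in}x',T^{in}y')$ for $1\le i\le d$. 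One must take a little care that a single $\d'$ works for all the finitely many bonding-map compositions and for all $d$ exponents simultaneously, but since everything is a finite collection of uniformly continuous maps on compact spaces this is routine.

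Summing the coordinatewise estimates against the weights $2^{-i}$ then yields $\rho(x,x')<\d$, $\rho(y,y')<\d$, and $\rho(T^{in}x',T^{in}y')<\d$ for each $1\le i\le d$, which is exactly the witness required by Definition \ref{arithm}. As $\d>0$ was arbitrary, $(x,y)\in\AP^{[d]}(X)$. I expect the lifting step to be the only point requiring thought; the rest is a standard inverse-limit $\d/N$ bookkeeping argument.
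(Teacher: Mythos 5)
Your proof is correct and follows essentially the same route as the paper's: both arguments fix a high enough level so that the weighted tail of the inverse-limit metric is negligible, apply the $\AP^{[d]}$ hypothesis at that single level with a tolerance calibrated (via uniform continuity of the finitely many bonding maps) to control all lower coordinates, and then lift the approximating points to the inverse limit using surjectivity of the coordinate projections. The only difference is presentational: the paper phrases the choice of level through neighborhoods $\pi_i^{-1}(U_i)\subset U$, $\pi_i^{-1}(V_i)\subset V$ rather than directly through the metric, which is equivalent bookkeeping.
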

\begin{proof} Let $\ep>0$, and let $U\times V$ be a neighborhood of $(x,y)$. Then there
are $i\in \N$ and a neighborhood $U_i\times V_i$ of $(x_i,y_i)$ such
that $\pi_i^{-1}(U_i)\subset U$ and $\pi_i^{-1}(V_i)\subset V$.
Since $(x_i,y_i)\in \AP^{[d]}(X_i)$ there are $x_i'\in U_i$ and $y_i'\in V_i$
such that there is $n>0$ large with $\rho(T_i^{jn}x_i',
T_i^{jn}y_i')<\ep'$ ($1\le j\le d$, $\ep'>0$) and
$\rho(\pi_{i-1,k}T_i^{jn}x_i',
\pi_{i-1,k}T_i^{jn}y_i')<\frac{\ep}{2}$, where $1\le j\le d$, $1\le
k\le i-2$. Note that $\pi_{i-1,k}:X_i\lra X_k$ is defined by
$\pi_{i-1,k}=\pi_{k}\ldots \pi_{i-1}$. Now choose $i$
with $\displaystyle \sum_{m=i+1}^\infty
\cfrac{{\rm diam}(X_i)}{2^m(1+{\rm diam}(X_i)}<\frac{\ep}{2}$.

Put $x'=(\ldots x_i' \ldots)\in\pi_i^{-1}(U_i)$ and $y'=(\ldots
y_i'\ldots)\in\pi_i^{-1}(V_i)$. Then $$T^{jn}(x')=(\ldots
T_i^{jn}(x_i')\ldots)\ \text{and}\ T^{jn}(y')=(\ldots
T_i^{jn}(y_i')\ldots).$$ Thus we have $\rho(T^{jn}x',T^{jn}y')<\ep$.
\end{proof}

\section{Systems with $\AP^{[d]}=\Delta$}

In this section we discuss the structure of a t.d.s. with
$\AP^{[d]}=\Delta$, and we show that each ergodic measure
of $(X,T)$ is isomorphic to a system of order $d$,
and in particular $(X,T)$ has zero entropy.

\subsection{Metric description}

\subsubsection{}
Let $d\in \N$. A factor $(Z,\ZZ,\nu,T) $ of $X$ is {\em characteristic} for averages
\begin{equation}\label{F}
    \frac 1 N\sum_{n=0}^{N-1}f_1(T^nx)\ldots
f_d(T^{dn}x)
\end{equation}
if the limiting behavior of (\ref{F}) only depends on the
conditional expectation of $f_i$ with respect to $Z$:
\begin{equation*}
    ||\lim_{N\to \infty}\frac{1}{N}\sum_{n=0}^{N-1}(T^nf_1 T^{2n}f_2 \ldots T^{dn}f_d -
    T^n\E(f_1|\mathcal{Z}) T^{2n}\E(f_2|\mathcal{Z}) \ldots T^{dn}\E(f_d|\mathcal{Z}))||_{L^2}=0
\end{equation*}
for any $f_1,\ldots,f_d\in L^{\infty}(X,\X,\mu)$. The system $Z$ is a universal characteristic
factor if it is a characteristic factor of $X$, and a factor of any
other characteristic factor of $X$. The universal characteristic factor of (\ref{F}) always exists \cite{HK05, Z}, and is denoted by $(Z_{d-1},\mathcal{Z}_{d-1},\mu_{d-1},T)$.

\begin{thm}\cite{HK05}\label{HK}
Let $(X,\X,\mu , T)$ be an ergodic system and $d\in \N$. Then the
system $(Z_{d-1}, \ZZ_{d-1}, \mu_{d-1}, T)$ is a (measure theoretic) inverse
limit of $d-1$-step nilsystems.
$(Z_{d-1}, \ZZ_{d-1}, \mu_{d-1}, T)$ is called a {\it system of order $d-1$}.
\end{thm}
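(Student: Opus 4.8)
The plan is to follow the Host--Kra construction, proving the two halves of the statement in turn: first that the seminorm factor $Z_{d-1}$ is the universal characteristic factor of (\ref{F}), and then that any ergodic system equal to its own $Z_{d-1}$ is an inverse limit of $(d-1)$-step nilsystems. First I would construct, by induction on $k$, the cubic measures $\mu^{[k]}$ on $X^{[k]}=X^{2^k}$: set $\mu^{[0]}=\mu$, and given $\mu^{[k]}$ with ergodic decomposition $\mu^{[k]}=\int \mu^{[k]}_\omega\,d\mu^{[k]}(\omega)$ over the $\sigma$-algebra of $T^{[k]}$-invariant sets, define $\mu^{[k+1]}=\int \mu^{[k]}_\omega\times\mu^{[k]}_\omega\,d\mu^{[k]}(\omega)$, the relatively independent self-joining over that $\sigma$-algebra. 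From these I define the seminorms $\|f\|_{U^k}^{2^k}=\int \prod_{\ep\in\{0,1\}^k} f(x_\ep)\,d\mu^{[k]}(x)$ and verify, via the Cauchy--Schwarz--Gowers inequalities, that they are genuine seminorms and are nondecreasing in $k$. The factor $Z_{d-1}$ is then characterized by $\E(f\mid Z_{d-1})=0 \iff \|f\|_{U^d}=0$; one must check that $\{\|f\|_{U^d}=0\}$ indeed comes from a factor $\sigma$-algebra, which follows from the fact that this set is an ideal stable under the operations used to define the seminorm.

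Next I would prove that $Z_{d-1}$ is characteristic. The engine here is the van der Corput inequality: applied to the averages (\ref{F}) it bounds the limiting $L^2$-norm of $\frac1N\sum_{n} f_1(T^nx)\cdots f_d(T^{dn}x)$ by a constant multiple of $\|f_i\|_{U^d}$ for each $i$ (with the remaining functions controlled in $L^\infty$), so that replacing each $f_i$ by $\E(f_i\mid Z_{d-1})$ leaves the limit unchanged. Universality --- that $Z_{d-1}$ is a factor of every characteristic factor --- follows from the same seminorm estimate together with the minimality of the $\sigma$-algebra it defines. This part is comparatively soft and essentially harmonic-analytic.

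The substantive part, and the main obstacle, is showing that an ergodic \emph{system of order $d-1$} (one for which $Z_{d-1}(X)=X$) is an inverse limit of $(d-1)$-step nilsystems. I would argue by induction on $d$. The base cases $d-1\in\{0,1\}$ are the trivial system and the Kronecker system (an inverse limit of group rotations, i.e. $1$-step nilsystems). For the inductive step, one first shows that the extension $X\to Z_{d-2}(X)$ is an ergodic \emph{isometric} extension, realizable as a compact abelian group extension by a cocycle $\rho$; by the inductive hypothesis the base $Z_{d-2}(X)$ is already an inverse limit of $(d-2)$-step nilsystems. The heart of the matter is then the cocycle analysis: one must show that $\rho$ is, up to a coboundary, of \emph{type $d-1$}, meaning it satisfies a generalized Conze--Lesigne functional equation forced by the structure of $\mu^{[d-1]}$.

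Finally, from the type-$(d-1)$ (Conze--Lesigne) equation I would extract the nilpotent structure. Following Host--Kra, one introduces the group $\mathcal{G}$ of measure-preserving transformations of $X$ that preserve the entire tower of cubic measures --- generated by $T$ together with the vertical rotations of the group extensions at each level --- and shows, using the type-$(d-1)$ equations, that $\mathcal{G}$ is $(d-1)$-step nilpotent and acts transitively. Passing to a suitable sequence of finite-dimensional quotients realizes $X$ as an inverse limit $\lim_{\longleftarrow}\mathcal{G}_i/\Gamma_i$ of $(d-1)$-step nilmanifolds on which $T$ is a translation, which is precisely the asserted conclusion. I expect the cocycle step --- proving the generalized Conze--Lesigne equation and deducing from it that the extension embeds into a nilpotent-group structure --- to be by far the most delicate point, since it is exactly where nilpotency (rather than mere isometric, abelian extension) is forced, and it is the step with no analogue in the classical theory of distal systems.
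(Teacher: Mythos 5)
This theorem is quoted in the paper from \cite{HK05} (with \cite{Z} for the universal characteristic factor) and the paper itself contains no proof of it, so the only meaningful comparison is with the cited Host--Kra argument, which your outline reproduces faithfully: cubic measures $\mu^{[k]}$ via relatively independent self-joinings, the seminorms $\|\cdot\|_{U^k}$ and the factor they define, the van der Corput estimate making $Z_{d-1}$ characteristic, and then the structure theorem by induction through abelian group extensions, type-$(d-1)$ (Conze--Lesigne) cocycle equations, and the nilpotent group of transformations preserving the cubic structure. Your sketch is correct and is essentially the same approach as the source the paper relies on; the one point to flag is that each stage you describe (especially the cocycle analysis forcing nilpotency) is itself a long chapter of \cite{HK05}, so this is an accurate roadmap rather than a self-contained proof.
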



We also need the following classic result by Furstenberg.
\begin{thm}\cite{F77}\label{F77}
Let $(X,\mathcal{X},\mu,T)$ be a m.p.t. and let $A\in \mathcal{X}$
be a set with positive measure. Then for every integer $k\geq 1$,
$$\liminf_{N\rightarrow\infty}\frac{1}{N}\sum_{n=0}^{N-1}
\mu(A\cap T^{-n}A\cap T^{-2n}A\cap\cdots\cap T^{-kn}A)>0.$$
\end{thm}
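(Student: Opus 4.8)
The plan is to derive the theorem from the structure of the characteristic factors recalled above, and in particular from Theorem~\ref{HK}. Write $T^mf:=f\circ T^m$, so that
\[
\mu(A\cap T^{-n}A\cap\cdots\cap T^{-kn}A)=\int_X \mathbf{1}_A\cdot(T^n\mathbf{1}_A)\cdots(T^{kn}\mathbf{1}_A)\,d\mu,
\]
which is exactly the correlation in (\ref{MEA}) with all functions equal to $\mathbf{1}_A$. First I would reduce to the ergodic case: letting $\mu=\int_\Omega\mu_\omega\,dP(\omega)$ be the ergodic decomposition, Fatou's lemma shows that the liminf of the averages for $\mu$ is at least $\int_\Omega\big(\liminf_N\tfrac1N\sum_{n}\mu_\omega(A\cap\cdots\cap T^{-kn}A)\big)\,dP(\omega)$; since $\mu(A)>0$ forces $\mu_\omega(A)>0$ on a set of positive $P$-measure, it suffices to prove strict positivity when $\mu$ is ergodic.

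So assume $(X,\X,\mu,T)$ is ergodic. By the convergence theorem of \cite{HK05,Z} together with Theorem~\ref{HK}, the averages $\tfrac1N\sum_n\prod_{i=1}^{k}T^{in}\mathbf{1}_A$ converge in $L^2(\mu)$ to a $\ZZ_{k-1}$-measurable limit, and this limit is unchanged when $\mathbf{1}_A$ is replaced by $g:=\E(\mathbf{1}_A\mid\ZZ_{k-1})$. Multiplying by the remaining factor $\mathbf{1}_A$ (which may itself be replaced by $g$ under the integral, by the tower property) and integrating, the quantity in question equals $\lim_N\tfrac1N\sum_n\int_{Z_{k-1}}\prod_{i=0}^{k}T^{in}g\,d\mu_{k-1}$, where $g\ge 0$ and $\int g\,d\mu_{k-1}=\mu(A)>0$. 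Passing to the canonical topological model, $(Z_{k-1},\mu_{k-1},T)$ is a strictly ergodic distal system, being an inverse limit of minimal $(k-1)$-step nilsystems. Thus the problem is reduced to a recurrence statement on a distal system: if $g\ge 0$ with $\int g>0$, then $\liminf_N\tfrac1N\sum_n\int\prod_{i=0}^{k}T^{in}g\,d\mu_{k-1}>0$.

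To prove this I would first choose, by inner regularity, a continuous $h$ with $0\le h\le g$ and $\int h\,d\mu_{k-1}>0$; by nonnegativity it suffices to bound the average for $h$ from below. The key point is that a distal system remains distal after passing to the product $(Z_{k-1}^{\,k},\sigma_k)$ with $\sigma_k=T\times T^2\times\cdots\times T^k$, because an off-diagonal pair is off-diagonal in some coordinate $j$, and $T^j$ inherits distality from $T$. Hence every point of $(Z_{k-1}^{\,k},\sigma_k)$ is almost periodic; in particular the diagonal point $(z,\ldots,z)$ is uniformly recurrent, so for each $\delta>0$ the return-time set $R=\{n:\rho(T^{in}z,z)<\delta,\ 1\le i\le k\}$ is syndetic and therefore of positive lower density. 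Choosing $\delta$ small relative to the modulus of continuity of $h$, for $n\in R$ one has $\prod_{i=0}^{k}h(T^{in}z)\ge(h(z)/2)^{k+1}$, whence $\liminf_N\tfrac1N\sum_n\prod_{i=0}^{k}h(T^{in}z)>0$ for every $z$ with $h(z)>0$. Since $\{h>0\}$ has positive measure, a final use of Fatou's lemma over $z$ yields $\liminf_N\tfrac1N\sum_n\int\prod_{i=0}^{k}T^{in}h\,d\mu_{k-1}>0$, as desired.

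I expect the only genuine obstacle to be hidden in Theorem~\ref{HK}: once the characteristic factor is identified as a pro-nilsystem, the remaining recurrence is the soft distal argument above. If one is not permitted to invoke Theorem~\ref{HK}, the difficulty migrates to its proof, equivalently to Furstenberg's structure theorem and the two extension lemmas---multiple recurrence passing through compact isometric extensions via relative almost periodicity, and through relatively weakly mixing extensions via a van der Corput estimate---which is where the real content of the theorem resides.
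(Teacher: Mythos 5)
The paper itself offers no proof of this statement: it is quoted directly from Furstenberg \cite{F77}. So your proposal has to stand on its own, and most of it does stand: the reduction to the ergodic case via Fatou, the passage from $\mathbf{1}_A$ to $g=\E(\mathbf{1}_A\mid\ZZ_{k-1})$ using the characteristic factor property and the tower property, and the soft distal argument (finite products of powers of a distal map form a distal system, so every point of $(Z_{k-1}^{\,k},\sigma_k)$ is almost periodic, so diagonal return-time sets are syndetic and of positive lower density) are all correct as far as they go, and they do prove the recurrence statement for \emph{continuous} nonnegative functions on the pro-nilsystem model.

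The fatal gap is the sentence ``by inner regularity, choose a continuous $h$ with $0\le h\le g$ and $\int h\,d\mu_{k-1}>0$.'' No such $h$ need exist: a nonnegative measurable function with positive integral need not dominate any nonzero continuous function. Concretely, take $X$ itself to be an irrational rotation of $\T$ (so that $Z_{k-1}=X$ and $g=\mathbf{1}_A$) and let $A$ be a fat Cantor set: $A$ is compact, nowhere dense, of positive Lebesgue measure, and any continuous $h\le \mathbf{1}_A$ vanishes on the dense open set $\T\setminus A$, hence vanishes identically. This is not a repairable technicality; it is exactly the divide between \emph{topological} multiple recurrence (Furstenberg--Weiss, which your syndeticity argument proves for open sets and continuous functions) and \emph{measure-theoretic} multiple recurrence, which is the content of the theorem. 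Nor can you patch it by approximating $g$ in $L^1$ by continuous functions: your topological lower bound has no quantitative dependence on $\int h$ alone (the syndeticity constant depends on the point $z$ and on the modulus of continuity of $h$), so the approximation error cannot be absorbed. After your (correct) reduction to a strictly ergodic pro-nilsystem, the known ways to finish are genuinely measure-theoretic: either Furstenberg's original structure-theorem induction (the SZ property passes through compact and weakly mixing extensions), or, for nilsystems, equidistribution of the polynomial orbit $(x,T^nx,\ldots,T^{kn}x)$ in a subnilmanifold (Parry, Lesigne, Leibman) combined with a Lebesgue density-point argument near the diagonal, as in Bergelson--Host--Kra. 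Accordingly, your closing assessment that ``the only genuine obstacle is hidden in Theorem \ref{HK}'' is not right: even granting Theorem \ref{HK} in full, your argument does not close.
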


\begin{rem}
In fact by Theorem 1.1. in \cite{HK05}, one can replace $\liminf$ in Theorem \ref{F77} by $\lim$, that is,
$$\lim_{N\rightarrow\infty}\frac{1}{N}\sum_{n=0}^{N-1}
\mu(A\cap T^{-n}A\cap T^{-2n}A\cap\cdots\cap T^{-kn}A)>0.$$
\end{rem}

\subsubsection{}

Let $(X, \mathcal{B},\mu,T)$ be an ergodic m.p.t. and $(Z_k,
\mathcal{Z}_k,\mu_k,T)$ be the $k$-step nilfactor of
$(X,\mathcal{B},\mu,T)$. Let $\displaystyle \mu=\int_{Z_{k}} \mu_z d\mu_k(z)$ be
the disintegration of $\mu$ over $\mu_k$. Pairs in the support of
the measure
$$\lambda_k=\int_{{Z}_{k}} \mu_z\times \mu_z d\mu_k(z)$$
are called $L_k^\mu$-pairs, where $L_k^\mu={\rm Supp}(\lambda_k)$.

Now we may obtain the following theorem related to $L_k^\mu$.

\begin{thm}\label{fibre-ap}
Let $(X,T)$ be a t.d.s. and $\mu$ an ergodic Borel measure on $X$.
Let $k\geq 1$ be an integer, then $L_k^\mu\subset \AP^{[k]}(X)$.
Moreover, $\bigcap_{k=1}^\infty L_k^\mu\subset \Ind_{ap}(X)$.
\end{thm}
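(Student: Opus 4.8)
The plan is to prove both statements by unwinding the definition of the measures $\lambda_k$ via the disintegration over the nilfactor $Z_k$, and then using the fact that on a $k$-step nilsystem one has good recurrence along arithmetic progressions. The key point is that a point in the support of $\lambda_k$ is, up to arbitrarily small perturbation, a pair $(x',y')$ lying in a single fiber of $X\to Z_k$, i.e.\ having the same image in the maximal measure-theoretic $k$-step nilfactor. So the first step is to reduce, for a given $\delta>0$ and a given $L_k^\mu$-pair $(x,y)$, to finding a single $n\in\Z$ with $\rho(T^{in}x',T^{in}y')<\delta$ for all $1\le i\le k$, where $(x',y')$ is a suitable nearby fiber pair.

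First I would fix $\delta>0$ and a $L_k^\mu$-pair $(x,y)$, and pick a pair of open neighborhoods $U\ni x$, $V\ni y$ of diameter less than $\delta/3$. Because $(x,y)\in\mathrm{Supp}(\lambda_k)$, the product $U\times V$ has positive $\lambda_k$-measure, which by definition of $\lambda_k=\int_{Z_k}\mu_z\times\mu_z\,d\mu_k(z)$ means that the set $W$ of $z\in Z_k$ with $\mu_z(U)>0$ and $\mu_z(V)>0$ has positive $\mu_k$-measure. The heart of the argument is then a multiple-recurrence statement \emph{relative} to the factor $Z_k$: I want to find a large set of $n$ for which, on a positive-measure set of fibers $z$, the fiber measure $\mu_z$ sees the simultaneous return $U\cap T^{-n}V$ (and more generally the progression $T^{-in}(\cdot)$ for $1\le i\le k$) with positive mass. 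The natural tool is Furstenberg's multiple recurrence (Theorem \ref{F77}), but applied in the right coordinates: since $Z_k$ is the characteristic factor of order $k$, the averages $\frac1N\sum_{n}\mu(A\cap T^{-n}A\cap\cdots\cap T^{-kn}A)$ are governed entirely by the conditional structure over $Z_k$, so the positivity of the multiple-recurrence average forces, for a positive-density set of $n$, the existence of points $x',y'$ in a common fiber of $Z_k$ with $x'$ near $x$, $T^{in}x'$ near $T^{in}y'$ for each $i$. Choosing $\delta_1<\delta$ small enough that $\delta_1$-closeness of two points propagates to $\delta$-closeness of their first $dk$ iterates (exactly as in the lemma showing $\AP^{[d]}(X,T)=\AP^{[d]}(X,T^k)$) then yields $(x,y)\in\AP^{[k]}(X)$.

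For the second statement, $\bigcap_{k=1}^\infty L_k^\mu\subset\Ind_{ap}(X)$, I would run essentially the same scheme but now extract an \emph{independence} set for the tuple rather than a single recurrence time. Given a pair $(x,y)$ lying in every $L_k^\mu$ and neighborhoods $U_1\ni x$, $U_2\ni y$, I must produce, for each $d$, a single $n$ such that $\bigcap_{i=1}^d T^{-in}U_{t_i}\ne\emptyset$ for all choices $(t_1,\dots,t_d)\in\{1,2\}^d$ simultaneously. The idea is to apply the characteristic-factor description at level $k=d\cdot2^d$ (concatenating, for every $\varepsilon=(t_1,\dots,t_d)$, a block of length $d$) and then invoke the IP/multiple-recurrence machinery on the nilfactor $Z_k$: on a $k$-step nilsystem the return times along a common fiber form a syndetic-type set rich enough that one value of $n$ works for all $2^d$ sign patterns at once. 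Here I would lean on the combinatorial structure encoded in the definition of $\Ind_{ap}$, bootstrapping the single-pattern recurrence from the first part into the all-patterns-simultaneously statement by intersecting positive-density (hence nonempty-intersection) sets of good $n$'s across the finitely many patterns.

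The main obstacle I anticipate is the passage from \emph{positivity of an average} to the \emph{existence of an actual point in a common fiber} realizing the simultaneous closeness: Furstenberg's theorem gives $\liminf_N\frac1N\sum_n \mu(A\cap T^{-n}A\cap\cdots\cap T^{-kn}A)>0$ as an unconditional statement about $\mu$, but what I really need is a conditional/fiberwise version that places the returning point in the support of a single $\mu_z$. Making this fiberwise localization rigorous—so that the recurrence happens \emph{within} a fiber of $Z_k$ rather than merely in $X$—is where the argument requires care, and it is precisely the characteristic property of $Z_k$ (that the limit of the averages depends only on the conditional expectations onto $\mathcal Z_k$, Theorem \ref{HK} together with the defining equation of a characteristic factor) that must be invoked to guarantee the returning mass concentrates on common fibers. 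The second, $\Ind_{ap}$, statement compounds this difficulty because one must control all $2^d$ patterns with a \emph{single} $n$, which forces the fiberwise recurrence to be uniform across the patterns rather than pattern-by-pattern.
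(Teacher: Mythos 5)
Your overall skeleton---positivity of $\lambda_k(U_0\times U_1)$, the characteristic property of $Z_k$, and Furstenberg's recurrence theorem---matches the paper's, but at the two decisive steps you substitute mechanisms that are either missing or fail. For the first statement, your argument rests on a fiberwise multiple recurrence: you want points $x',y'$ in a \emph{common fiber} of $X\to Z_k$ with $T^{in}x'$ close to $T^{in}y'$ for $1\le i\le k$, and you yourself flag this ``fiberwise localization'' as the unresolved obstacle. It is a genuine gap: $Z_k$ is only a measure-theoretic factor, so its fibers and the supports of the disintegration $\mu_z$ are defined only almost everywhere and carry no topological structure, and no off-the-shelf theorem produces recurrence \emph{within} such fibers. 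The paper's proof shows the localization is never needed. It applies Furstenberg's theorem inside the factor $Z_k$ to the set $A_a=\{z:\E(1_{U_0}|\mathcal{Z}_k)(z)>a \ \text{and}\ \E(1_{U_1}|\mathcal{Z}_k)(z)>a\}$, transfers positivity back to $X$ by the characteristic property to get
\[
\lim_{N\to\infty}\frac1N\sum_{n=0}^{N-1}\mu\bigl(U_0\cap T^{-n}U_1\cap T^{-2n}U_1\cap\cdots\cap T^{-(k+1)n}U_1\bigr)>0
\]
---note the progression has length $k+2$, one longer than the definition of $\AP^{[k]}$ requires---and then simply sets $x_0'=x$ and $x_1'=T^n x$ for a point $x\in U_0$ with $T^{jn}x\in U_1$, $j=1,\ldots,k+1$. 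For $1\le j\le k$ both $T^{jn}x_0'=T^{jn}x$ and $T^{jn}x_1'=T^{(j+1)n}x$ lie in $U_1$, hence are within $\mathrm{diam}(U_1)<\delta$ of each other. The pair is a point and its own $n$-th iterate; no common fiber enters at all.

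For the second statement you do name the right device (concatenation at level $m=d\cdot 2^d$), but the mechanism you actually invoke to finish---``intersecting positive-density (hence nonempty-intersection) sets of good $n$'s across the finitely many patterns''---is false as a principle: positive-density, and even syndetic, subsets of $\N$ can be pairwise disjoint (the even and the odd integers), so pattern-by-pattern recurrence plus intersection does not yield a single common $n$. The point of the concatenation is precisely that no intersection argument is required: enumerate $\{0,1\}^d=\{t^{(1)},\ldots,t^{(2^d)}\}$, apply the single-pattern positivity to the one long pattern $(i_1,\ldots,i_m)=(t^{(1)},t^{(2)},\ldots,t^{(2^d)})$, and observe that with $V_t=T^{-n}U_{t_1}\cap\cdots\cap T^{-dn}U_{t_d}$ one has
\[
T^{-n}U_{i_1}\cap\cdots\cap T^{-mn}U_{i_m}= V_{t^{(1)}}\cap T^{-dn}V_{t^{(2)}}\cap\cdots\cap T^{-(2^d-1)dn}V_{t^{(2^d)}},
\]
so nonemptiness of the left-hand side forces every $V_t\neq\emptyset$ with the \emph{same} $n$. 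Note also that the mixed patterns occurring here (arbitrary strings of $U_0$'s and $U_1$'s) still have positive averages because on $A_a$ both conditional expectations exceed $a$, so $\E(1_{U_{i_j}}|\mathcal{Z}_m)\ge a\,1_{A_a}$ whichever symbol $i_j$ is---this is exactly where membership of $(x_0,x_1)$ in every $L_m^\mu$, not just one of them, is used.
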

\begin{proof}
Let $(x_0,x_1)\in L_k^\mu$. Then for any neighborhood $U_0\times
U_1$ of $(x_0,x_1)$
$$\lambda_k(U_0\times U_1)=\int_{Z_k} \E(1_{U_0}|\mathcal{Z}_k)
\E(1_{U_1}|\mathcal{Z}_k)d \mu_k>0.$$

By Theorem \ref{HK}, we have
\begin{eqnarray*}
&&\lim_{N\rightarrow\infty}\frac{1}{N}\sum_{n=0}^{N-1}
\mu(U_0\cap T^{-n}U_1\cap T^{-2n}U_1\cap\cdots\cap T^{-(k+1)n}U_1)\\
&=& \lim_{N\rightarrow\infty}\frac{1}{N}\sum_{n=0}^{N-1} \int_X
1_{U_0}(x)1_{U_1}(T^n x)1_{U_1}(T^{2n}x)\cdots1_{U_1}(T^{(k+1)n}x)
d\mu(x)\\
&=& \lim_{N\rightarrow\infty}\frac{1}{N}\sum_{n=0}^{N-1}
\int_{{Z}_k}
\E(1_{U_0}|\mathcal{Z}_k)(z)\E(1_{U_1}|\mathcal{Z}_k)(T^n z)
\cdots \E(1_{U_1}|\mathcal{Z}_k)(T^{(k+1)n}z)d\mu_k(z)\\
&\geq& \liminf_{N\rightarrow\infty}\frac{1}{N}\sum_{n=0}^{N-1}
a^{k+2}\int_{{Z}_k} 1_{A_a}(z) 1_{A_a}(T^n z)\cdots
1_{A_a}(T^{(k+1)n}z)d\mu_k(z)\\
&=& \liminf_{N\rightarrow\infty}\frac{1}{N}\sum_{n=0}^{N-1}
a^{k+2}\mu(A_a\cap T^{-n}A_a\cap T^{-2n}A_a\cap\cdots\cap
T^{-(k+1)n}A_a),
\end{eqnarray*}
where $a>0$ and $A_a=\{z\in Z_k:\E(1_{U_0}|\mathcal{Z}_k)(z)>a\
\text{and}\ \E(1_{U_1}|\mathcal{Z}_k)(z)>a\}$.

As $\E(1_{U_0}|\mathcal{Z}_k)\le 1$ and $\E(1_{U_1}|\mathcal{Z}_k)\le 1$, one has that
\begin{equation*}
  \begin{split}
  0< b & :=  \int_{Z_k}
\E(1_{U_0}|\mathcal{Z}_k) \E(1_{U_1}|\mathcal{Z}_k)d \mu_k\\
& = \int_{A_a}
\E(1_{U_0}|\mathcal{Z}_k) \E(1_{U_1}|\mathcal{Z}_k)d \mu_k + \int_{Z_k\setminus A_a}
\E(1_{U_0}|\mathcal{Z}_k) \E(1_{U_1}|\mathcal{Z}_k)d \mu_k \\
& \le \mu_k(A_a)+a\mu_k(Z_k\setminus A_a).
  \end{split}
\end{equation*}
Hence there exists $a>0$ such that $\mu_k(A_a)=b-a \mu_k(Z_k\setminus A_a)>0$.  And so
$$\lim_{N\rightarrow\infty}\frac{1}{N}\sum_{n=0}^{N-1} \mu(U_0\cap
T^{-n}U_1\cap T^{-2n}U_1\cap\cdots\cap T^{-(k+1)n}U_1)>0$$ following
Theorem \ref{F77}. In particular, there is some $x\in X$ such that
$x\in U_0$ and $T^{jn} x\in U_1$ for $j=1,2,\ldots,k+1$.

Given $\epsilon>0$, let $U_0\times U_1$ be a neighborhood of
$(x_0,x_1)$ with diameters of $U_0$ and $U_1$ less that $\epsilon$.
By the above discussion, and letting $x_0'=x$ and $x_1'=T^n(x)$, we
get that $T^{jn} x_0', T^{jn} x_1'\in U_1$ for $j=1,2,\ldots,k$.
This implies that $(x_0,x_1)\in \AP^{[k]}(X)$.

\medskip
Now assume that $(x_0,x_1)\in \bigcap_{k=1}^\infty L_k^\mu$ and
$U_0\times U_1$ is a neighborhood of $(x_0,x_1)$. Let
$(i_0,i_1,\ldots,i_m)\in \{0,1\}^{m+1}$. In a similar discussion as
above, we get that there is $n>0$ such that
$$U_{i_0}\cap T^{-n}U_{i_1}\cap \ldots\cap
T^{-mn}U_{i_m}\not=\emptyset.$$

Since $m$ is arbitrary, for a fixed $k\in\N$ by choosing suitable
$(i_0,i_1,\ldots,i_m)$ we get that $(x_0,x_1)\in \Ind_{ap}(X)$.
To see this, we will show  for every pair of neighborhoods $U_0$,
$U_1$ of $x_0$ and $x_1$ respectively, and every $d\in \N$ there is some $n\in \N$ such that
for each $(t_1,\ldots, t_d)\in \{0,1\}^d$,
$$T^{-n}U_{t_1}\cap T^{-2n}U_{t_2}\cap \ldots\cap
T^{-nd}U_{t_d}\not=\emptyset.$$
Let $\{0,1\}^d=\{t^{(k)}=(t^{(k)}_1, \ldots,t^{(k)}_d): k=1,2,\ldots,2^d\}$. Let $m=d\cdot 2^d$ and $(i_1,\ldots,i_m)=(t^{(1)},t^{(2)},\ldots,t^{(2^d)})\in \{0,1\}^{d\cdot 2^d}$. There is $n>0$ such that
$$T^{-n}U_{i_1}\cap \ldots\cap
T^{-mn}U_{i_m}\not=\emptyset.$$
Set $V_{t^{(k)}}=T^{-n}U_{t^{(k)}_1}\cap T^{-2n}U_{t^{(k)}_2}\cap \ldots\cap
T^{-nd}U_{t^{(k)}_d}$. Then one has that
$$T^{-n}U_{i_1}\cap \ldots\cap
T^{-mn}U_{i_m}=V_{t^{(1)}}\cap T^{-n} V_{t^{(2)}} \cap T^{-2n} V_{t^{(2)}}\cap \ldots \cap T^{-(2^d-1)n} V_{t^{(2^d)}}.$$
It follows that $V_{t^{(k)}}\neq \emptyset$ for all $k\in \{1,2,\ldots,2^d\}$. Since $\{0,1\}^d=\{t^{(k)}: k=1,2,\ldots,2^d\}$, we have that  for each $(t_1,\ldots, t_d)\in \{0,1\}^d$,
$$T^{-n}U_{t_1}\cap T^{-2n}U_{t_2}\cap \ldots\cap
T^{-nd}U_{t_d}\not=\emptyset.$$
The proof is complete.
\end{proof}

A direct application of the above theorem is the following.
\begin{thm}
Let $(X,T)$ be a t.d.s. with $\AP^{[d]}(X)=\Delta$ for some integer
$d\geq 1$, then for each ergodic Borel measure $\mu$, $(X,\mu,T)$ is
measure theoretical isomorphic to a $d$-step pro-nilsystem.

Likewise, if
$\AP^{[\infty]}(X)=\Delta$, then for each ergodic Borel measure $\mu$,
$(X,\mu,T)$ is measure theoretical isomorphic to a $\infty$-step
pro-nilsystem.
\end{thm}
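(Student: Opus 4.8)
The plan is to read off both assertions directly from Theorem \ref{fibre-ap}, the decisive elementary fact being that a measure of the form $\int \nu_z\times\nu_z$ whose support lies in the diagonal must have Dirac fibres. For the finite statement, I fix an ergodic $\mu$, let $(Z_d,\ZZ_d,\mu_d,T)$ be the $d$-step nilfactor with disintegration $\mu=\int_{Z_d}\mu_z\,d\mu_d(z)$, and set $\lambda_d=\int_{Z_d}\mu_z\times\mu_z\,d\mu_d(z)$. By Theorem \ref{fibre-ap} we have $L_d^\mu={\rm Supp}(\lambda_d)\subseteq \AP^{[d]}(X)=\Delta$, so $\lambda_d$ is concentrated on the diagonal. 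Since $(\nu\times\nu)(\Delta)=\sum_{a}\nu(\{a\})^2\le 1$ with equality exactly when $\nu$ is a point mass, the identity $\lambda_d(\Delta^c)=\int_{Z_d}\bigl(1-(\mu_z\times\mu_z)(\Delta)\bigr)\,d\mu_d(z)=0$ forces $\mu_z$ to be a Dirac mass for $\mu_d$-a.e.\ $z$. Hence the factor map $\pi_d\colon X\to Z_d$ has a measurable inverse off a null set, i.e.\ $(X,\mu,T)$ is measure-theoretically isomorphic to $(Z_d,\mu_d,T)$, which by Theorem \ref{HK} is a system of order $d$, that is, a $d$-step pro-nilsystem.

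For the infinite statement I would use $\AP^{[\infty]}(X)=\bigcap_{k\ge 1}\AP^{[k]}(X)$ and the measurable $\infty$-step pro-nilfactor $Z_\infty=\bigvee_{k}Z_k$, carrying the disintegration $\mu=\int_{Z_\infty}\mu^\infty_w\,d\mu_\infty(w)$ and the measure $\lambda_\infty=\int \mu^\infty_w\times\mu^\infty_w\,d\mu_\infty(w)$. The crucial step is the inclusion ${\rm Supp}(\lambda_\infty)\subseteq {\rm Supp}(\lambda_k)=L_k^\mu$ for every $k$: if $A\times B$ is a box with $\lambda_k(A\times B)=0$, then $\mu^k_z(A)\,\mu^k_z(B)=0$ for $\mu_k$-a.e.\ $z$, and since each $\mu^k_z$ is the average of the $\mu^\infty_w$ over the fibre $\pi_{k,\infty}^{-1}(z)$, for a.e.\ such $w$ either $\mu^\infty_w(A)=0$ or $\mu^\infty_w(B)=0$, whence $\lambda_\infty(A\times B)=0$. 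Combining this with Theorem \ref{fibre-ap} yields ${\rm Supp}(\lambda_\infty)\subseteq\bigcap_k L_k^\mu\subseteq\bigcap_k\AP^{[k]}(X)=\AP^{[\infty]}(X)=\Delta$, and the same diagonal-versus-Dirac argument as above shows the fibres $\mu^\infty_w$ are point masses. Thus $(X,\mu,T)$ is isomorphic to $(Z_\infty,\mu_\infty,T)$, an inverse limit of systems of order $k$ and hence an $\infty$-step pro-nilsystem.

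The routine ingredients are the fibrewise computation of $(\nu\times\nu)(\Delta)$ and the standard fact that an extension with point-mass conditional measures is a measure isomorphism. The step I expect to require the most care is the support inclusion ${\rm Supp}(\lambda_\infty)\subseteq{\rm Supp}(\lambda_k)$ in the infinite case: it hinges on the compatibility of the disintegrations over the nested factors $Z_k\subseteq Z_\infty$, and because the diagonal can be $\lambda$-null when the fibres are non-atomic, the argument must be carried out at the level of boxes $A\times B$ and their null sets rather than by comparing the two measures directly.
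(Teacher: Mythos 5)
Your proposal is correct and follows exactly the route the paper intends: the paper states this theorem as a ``direct application'' of Theorem \ref{fibre-ap}, and your fleshing-out --- ${\rm Supp}(\lambda_d)\subseteq\AP^{[d]}=\Delta$ forces the conditional measures $\mu_z$ to be Dirac masses, hence $(X,\mu,T)\cong(Z_d,\mu_d,T)$, which is a system of order $d$ by Theorem \ref{HK} --- is precisely the intended argument. Your box-level verification of ${\rm Supp}(\lambda_\infty)\subseteq\bigcap_k L_k^\mu$ in the infinite case supplies the inclusion the paper uses but leaves unproved (it asserts $\bigcap_k L_k^\mu=L_\infty^\mu$ as ``easy to verify'' in the proof of Proposition \ref{zeroentropy}), so no gap remains.
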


\subsection{Topological description}





Let $(X,T)$ be a t.d.s.
A pair $(x,y)\in X \times X$ is said to be {\em asymptotic} when $\lim_{n \to +\infty} d(T^nx,T^ny)=0$.
The set of asymptotic pairs of $(X,T)$ is denoted by $Asym(X,T)$.

The notion of entropy pair was introduced by Blanchard in \cite{B1, B2}. Let $(X,T)$ be a  t.d.s. and $x,x'$ two distinct points of $X$. Call $(x,x')\in X^2$ an {\em entropy pair} of $(X,T)$ if for every open cover $\{U,V\}$ of $X$ with $x\in {\rm int}(U^c), x'\in {\rm int}(V^c)$ we have that the entropy $h_{top}(\{U,V\})>0$.
Let $E(X,T)$ be the set of entropy pairs.

It is proved in \cite{HLY1} that $\Ind_{ap}$ has the lifting property. Since ${\bf P}\subset
\RP^{[d]}$, we know that $X/\RP^{[d]}$ is distal, and hence has zero
entropy. Here we have

\begin{prop}\label{zeroentropy} Let $(X, T)$ be a t.d.s.. Then
\begin{enumerate}
\item $Asym(X,T)\subset\AP^{[d]}(X)$. Consequently, $X/\mathbf{R}(\AP^{[d]}(X))$ has zero topological entropy.

\item $E(X,T)\subset \Ind_{ap}(X,T)\subset \AP^{[d]}(X,T)$. This also implies that
$X/{\mathbf R}(\AP^{[d]}(X))$ has zero topological entropy.

\item  If $\mu\in M^e(X,T)$ is not measure theoretically isomorphic to an $\infty$-step nilsystem,
then $\Ind_{ap}\not=\Delta_X$, where an $\infty$-step nilsystem means that it is an inverse
limit of minimal nilsystems. This also implies that
$X/\mathbf{R}(\AP^{[d]}(X))$ has zero topological entropy.
\end{enumerate}
\end{prop}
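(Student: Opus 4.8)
The plan is to establish in each part a containment of the relevant pair‑relation inside $\AP^{[d]}(X)$, and then to derive all three entropy statements from one principle: the maximal zero‑entropy (topological Pinsker) factor of $(X,T)$ is $X/\mathbf{R}(E(X,T))$, together with the fact that entropy pairs lift through factor maps. Concretely, write $R=\mathbf{R}(\AP^{[d]}(X))$ and $\pi\colon X\to X/R$. Once $E(X,T)\subset R=R_\pi$ is known, any nondiagonal entropy pair of $X/R$ would lift to an entropy pair of $X$ whose coordinates lie in distinct $R$-classes, contradicting $E(X,T)\subset R_\pi$; hence $E(X/R)=\emptyset$ and $X/R$ has zero topological entropy. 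So for the entropy conclusions it suffices to produce $E(X,T)\subset\AP^{[d]}(X)$, which is exactly the content of part (2).

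For part (1) the containment $Asym(X,T)\subset\AP^{[d]}(X)$ is immediate: if $\lim_{m\to+\infty}\rho(T^mx,T^my)=0$, then given $\delta>0$ I choose $N$ with $\rho(T^mx,T^my)<\delta$ for all $m\ge N$; taking $x'=x$, $y'=y$ and any $n\ge N$ forces $\rho(T^{in}x,T^{in}y)<\delta$ for all $1\le i\le d$, so $(x,y)\in\AP^{[d]}(X)$.

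For part (2) I would first prove $\Ind_{ap}(X)\subset\AP^{[d]}(X)$ by a shifting trick. Given $(x_1,x_2)\in\Ind_{ap}$ and $\delta>0$, pick neighborhoods $U_1\ni x_1$, $U_2\ni x_2$ of diameter $<\delta$ and apply the $\Ind_{ap}$ property in dimension $d+1$ to obtain $n$ realizing every pattern in $\{1,2\}^{d+1}$ on the progression $\{n,2n,\dots,(d+1)n\}$. The pattern $(1,1,\dots,1)$ yields $p$ with $T^{in}p\in U_1$ for $1\le i\le d+1$, and the pattern $(2,1,\dots,1)$ yields $q$ with $T^{n}q\in U_2$ and $T^{in}q\in U_1$ for $2\le i\le d+1$. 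Setting $x'=T^np\in U_1$ and $y'=T^nq\in U_2$, both $T^{in}x'=T^{(i+1)n}p$ and $T^{in}y'=T^{(i+1)n}q$ lie in $U_1$ for $1\le i\le d$, whence $\rho(T^{in}x',T^{in}y')<\delta$, giving $(x_1,x_2)\in\AP^{[d]}$. The deeper containment $E(X,T)\subset\Ind_{ap}(X,T)$ I would obtain from the independence characterization of entropy pairs (entropy pairs are IE-pairs) together with Furstenberg's multiple recurrence, Theorem \ref{F77}: for an entropy pair $(x_1,x_2)$ and neighborhoods $U_1,U_2$ one produces a positive-entropy invariant measure witnessing the pair and then runs the recurrence argument of Theorem \ref{fibre-ap} to realize the homogeneous progression $\{n,2n,\dots,dn\}$; alternatively this is the lifting/independence result of \cite{HLY1}. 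I expect this to be the main obstacle, since positive density of an independence set by itself does not produce the multiplicatively structured progression $\{n,2n,\dots,dn\}$, so a genuinely dynamical (measure-theoretic recurrence) input is needed.

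For part (3), when $\mu$ is ergodic but not isomorphic to an $\infty$-step nilsystem, the measurable $\infty$-step nilfactor $Z_\infty=\bigvee_k Z_k$ is a proper factor, so the disintegration of $\mu$ over $Z_\infty$ has nontrivial fibers and the fiber-product measure is not concentrated on $\Delta_X$; hence $\bigcap_k L_k^\mu\not\subset\Delta_X$, and Theorem \ref{fibre-ap} gives $\bigcap_k L_k^\mu\subset\Ind_{ap}(X)$, so $\Ind_{ap}(X)\neq\Delta_X$. The remaining entropy assertions in (1) and (3) then follow from the mechanism of the first paragraph, since a positive-entropy ergodic measure is never an $\infty$-step nilsystem (pro-nilsystems are distal, hence of zero entropy), and in every case $E(X,T)\subset\Ind_{ap}(X)\subset\AP^{[d]}(X)\subset R$, so $X/R$ carries no entropy pairs and has zero topological entropy.
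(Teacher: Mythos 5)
Your part (1), your part (3), the shifting trick you use to prove $\Ind_{ap}(X,T)\subset\AP^{[d]}(X,T)$ (pass to $d+1$, use the patterns $(1,1,\dots,1)$ and $(2,1,\dots,1)$, and shift by $T^n$), and your unified entropy mechanism are all correct. The entropy mechanism -- collapse all entropy pairs, then use Blanchard's theorem that positive entropy forces the existence of entropy pairs together with the lifting of entropy pairs through factor maps -- is a legitimate alternative to the paper, which instead cites Blanchard--Host--Ruette on asymptotic pairs for (1) and the lifting property of $\Ind_{ap}$ from \cite{HLY1} for (3); since the three entropy conclusions are the same statement, deriving them all from the containment $E(X,T)\subset\mathbf{R}(\AP^{[d]}(X))$ is logically fine.

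The genuine gap is exactly where you flagged it: the containment $E(X,T)\subset\Ind_{ap}(X,T)$. You leave it as a sketch resting on machinery you do not supply (a variational principle producing a witnessing measure for a topological entropy pair, the identification of measure entropy pairs with the support of the fiber product over the Pinsker factor, monotonicity of such supports under inclusion of factor $\sigma$-algebras, plus the ergodicity of the witnessing measure that Theorem \ref{fibre-ap} requires). Worse, your stated reason for resorting to this machinery is false: positive density of an independence set \emph{does} produce the progressions $\{n,2n,\dots,dn\}$, and this is the paper's one-line argument. By \cite{HY06}, every neighborhood $U_1\times U_2$ of an entropy pair admits an independence set $F\subset\N$ of positive density; by Szemer\'edi's theorem $F$ contains an arithmetic progression $\{a,\,a+r,\dots,a+dr\}$ of length $d+1$; any subset of an independence set is an independence set, and independence sets can be translated, since for $J+a\subset F$ one has $T^{-a}\bigl(\bigcap_{j\in J}T^{-j}U_{s(j)}\bigr)=\bigcap_{j\in J}T^{-(j+a)}U_{s(j)}\neq\emptyset$, which forces $\bigcap_{j\in J}T^{-j}U_{s(j)}\neq\emptyset$. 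Hence $\{r,2r,\dots,dr\}$ is an independence set for $(U_1,U_2)$, which is precisely the $\Ind_{ap}$ condition with $n=r$. The observation you missed is that an arithmetic progression of length $d+1$, translated by its first term, becomes homogeneous. With this observation your proof of (2) closes, and by your own architecture the whole proposition follows; without it, your proposal does not contain a proof of (2).
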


\begin{proof}(1) It is easy to see that $Asym (X,T) \subset \AP^{[d]}(X)$.
It follows that $\overline{Asym(X,T)}\subset \AP^{[d]}(X)$. By the
result of \cite{BHR} we know that $X/\mathbf{R}(\AP^{[d]}(X))$ has
zero topological entropy.

\medskip

(2)  It was shown in \cite{HY06} that if $(x_1,x_2)\in E(X,T)$ then each neighborhood $U_1\times U_2$ of $(x_1,x_2)$ has an
independence set of positive density. By the famous Szemer\'{e}di's
theorem, each positive density set contains arbitrarily long
arithmetic progressions, which implies that $E(X,T)\subset
\Ind_{ap}(X,T)\subset \AP^{[d]}(X,T)$. Thus, one gets that
$X/\mathbf{R}(\AP^{[d]}(X))$ has zero topological entropy.

\medskip
(3) The proof is similar to that of Theorem 6.4 of \cite{D-Y}. If $\Ind_{ap}(X)=\Delta_X$,
then by Theorem \ref{fibre-ap} we have that $\bigcap_{k\in \N} L^\mu_k\subset \Ind_{ap}(X)=\Delta_X$.
It is easy to verify that $\bigcap_{k\in \N} L^\mu_k=L^\mu_\infty$ where $L^\mu_\infty$ is the support
of $\lambda_\infty=\int_{{Z}_{\infty}} \mu_z\times \mu_z d\mu_\infty(z)$ with $(Z_\infty,\mu_\infty)$
the inverse limit of $(Z_k,\mu_k)$. Thus for each ergodic measure $\mu$, $(X,\mu,T)$ is measure
theoretically isomorphic to an $\infty$-step nilsystem.

If $\mu\in M^e(X,T)$ is not measure theoretically isomorphic to an $\infty$-step nilsystem,
then $\Ind_{ap}\not=\Delta_X$. Since $\Ind_{ap}(X,T)$ has
the lifting property and $\Ind_{ap}(X,T)\subset
\AP^{[d]}(X,T)$, we conclude that $X/\mathbf{R}(\AP^{[d]}(X))$ has
zero topological entropy.
\end{proof}

\section{For a $d$-step pro-nilsystem $\AP^{[i]}=\RP^{[i]}$, $1\le i\le d$}

In this section we show that for a $d$-step
pro-nilsystem, $$\AP^{[i]}=\RP^{[i]}, 1\le i\le d.$$
Hence at least in this case, $\AP^{[i]}$ is an equivalence relation.

\subsection{$\AP^{[i]}=\RP^{[i]}$ for $i\le d$}


Let $X$ be a compact metric space and let $\mathcal{M}(X)$ be the collection
of regular Borel probability measures on $X$ provided with the weak
star topology. Then $\mathcal{M}(X)$ is a compact metric space in which $X$ is
embedded by the mapping $x\mapsto \d_x$, where $\d_x$ is the dirac
measure at $x$. If $\phi: X\rightarrow Y$ is a continuous map
between compact metric spaces, then $\phi$ induces a continuous map
$\phi^*: \mathcal{M}(X)\rightarrow \mathcal{M}(Y)$ which extends $\phi$, where
$(\phi^*\mu)(A)=\mu(\phi^{-1}A)$ for all Borel sets $A\subseteq Y$.


\begin{de}
An extension $\pi: (X,T)\rightarrow (Y,T)$ of t.d.s. is said to have a {\em
relatively invariant measure} (RIM for short) if there exists a
continuous homomorphism $\lambda:Y\rightarrow \mathcal{M}(X)$ of t.d.s. such
that $\pi^*\circ \lambda : Y\rightarrow \mathcal{M}(Y)$ is just the (dirac)
embedding.
\end{de}
In other words: $\pi$ is a RIM extension if and only if
for every $y\in Y$ there is a $\lambda_y\in \mathcal{M}(X)$ with ${\rm supp}
\lambda_y\subseteq \pi^{-1}(y)$ and the map $y\mapsto \lambda_y:
Y\rightarrow \mathcal{M} (X)$ is a homomorphism of t.d.s; this map $\lambda$ is
called a {\em section} for $\pi$. Note that $\pi: X\rightarrow
\{\star\}$ has a RIM if and only if $X$ has an invariant measure if
and only if $\mathcal{M}(X)$ has a fixed point, where $\{\star\}$ stands for
the trivial system.




\begin{de}\label{Group-extension}
An extension $\phi: (Z,T)\rightarrow (Y,T)$ is called a {\em group extension} with group $G$ if the following conditions are fulfilled:
\begin{enumerate}
  \item $G$ is a compact Hausdorff topological group, acting continuously on $Z$ from the right as a
  group of automorphisms of the system $Z$; this means that there is a continuous mapping $(x,g)\mapsto xg: Z\times G\rightarrow Z$ such that
      \begin{enumerate}
        \item (right action): $\forall x\in Z, \forall g_1,g_2\in G$, $x(g_1g_2)=(xg_1)g_2, x e_G=x$;
        \item (Automorphisms): $\forall g\in G, \forall x\in Z$: $T(xg)=(Tx)g$;
      \end{enumerate}
  \item The fibers of $\phi$ are precisely the $G$-orbits in $Z$, that is, for all $x\in Z$, $\phi^{-1}(\phi(x))=xG$.
\end{enumerate}

\end{de}

A basic theorem about equicontinuous extension is the following result:

\begin{thm}\cite{EGS}\label{equi-extension}
Let $\pi: X\rightarrow Y$ be an extension of minimal systems. Then $\pi$ is equicontinuous if and only
if it is a factor of a group extension, that is, we have the following commutative diagram with $\phi$ a group extension:
$$
\xymatrix{
& X \ar[d]_{\pi}
& {Z}\ar[l]_{\psi} \ar[ld]^{\phi} \\
&Y}
$$

\end{thm}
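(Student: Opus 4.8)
The plan is to prove the two implications separately; the reverse implication is routine, while the forward implication carries all the content.

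For the ``if'' direction, suppose $\pi$ factors through a group extension $\phi:(Z,T)\to (Y,T)$ with compact group $G$ via a factor map $\psi:(Z,T)\to (X,T)$ satisfying $\pi\circ\psi=\phi$. I would first observe that every group extension is an isometric (hence equicontinuous) extension. Fix a bi-invariant metric $d_G$ on the compact group $G$; for $z,z'$ in a common fibre $\phi^{-1}(y)=zG$ write $z'=zg$ for the essentially unique $g\in G$ and set $D(z,z')=d_G(e_G,g)$. By the automorphism property in Definition \ref{Group-extension}, $T(zg)=(Tz)g$, so the group element relating $Tz$ and $Tz'$ is again $g$; hence $D(Tz,Tz')=D(z,z')$ and each $T^n$ acts isometrically on the fibres of $\phi$, i.e. $\phi$ is equicontinuous. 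Pushing $D$ forward along $\psi$, which maps $R_\phi$ onto $R_\pi$ and commutes with $T$, yields a $T$-invariant continuous fibre metric on $R_\pi$, so $\pi$ is equicontinuous as well. This direction needs only the continuity of the assignment $(z,z')\mapsto g$ on $R_\phi$, which is standard for a free fibrewise action.

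For the ``only if'' direction I would realise the group extension as a bundle of fibre isometries. Equicontinuity of $\pi$ first provides a $T$-invariant continuous fibre metric: for $(x,x')\in R_\pi$ set $D(x,x')=\sup_{n\in\Z}\rho(T^nx,T^nx')$, which is finite and continuous by equicontinuity and $T$-invariant by construction, so every $T^n$ restricts to a $D$-isometry $X_y\to X_{T^ny}$. Fix a base fibre $F=X_{y_0}$ and put
\[
Z_0=\{\sigma:F\to X_y \mid y\in Y,\ \sigma\ \text{an onto } D\text{-isometry}\},
\]
a compact space (as a closed set of graphs in $F\times X$) with projection $\phi(\sigma)=y$. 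Minimality of $Y$ makes any $y$ a limit of $T^{n_k}y_0$, and the isometries $T^{n_k}:F\to X_{T^{n_k}y_0}$ subconverge to an isometry $F\to X_y$, so $\phi$ is onto and all fibres are isometric to $F$. Define $T\sigma=T\circ\sigma$ and let the compact group $G=\mathrm{Iso}(F,D)$ act on the right by $\sigma\cdot g=\sigma\circ g$; then $T(\sigma g)=(T\sigma)g$, and since any two isometries $F\to X_y$ differ by precomposition with an element of $G$, the fibres of $\phi$ are exactly the $G$-orbits, so $\phi:(Z_0,T)\to (Y,T)$ is a group extension. Finally I pass to a minimal subsystem $Z\subseteq Z_0$, set $G'=\{g\in G:Zg=Z\}$ (a closed subgroup), and use the evaluation map $\psi(\sigma)=\sigma(x_0)$ for a fixed $x_0\in F$: it satisfies $\pi\circ\psi=\phi$ and $\psi\circ T=T\circ\psi$, and since $X$ is minimal $\psi(Z)$ is closed, invariant and nonempty, hence all of $X$, so $\psi|_Z$ is a factor map over $Y$.

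The main obstacle is to verify that $\phi|_Z:Z\to Y$ is still a group extension, i.e. that $G'$ acts transitively on each fibre $Z\cap\phi^{-1}(y)$; equivalently, for $z_1,z_2\in Z$ with $\phi(z_1)=\phi(z_2)$ one must show the isometry $z_1^{-1}z_2\in G$ preserves $Z$ and so lies in $G'$. This is precisely the step that forces us to exploit minimality of $(Z,T)$, and I would carry it out through the enveloping semigroup $E(Z,T)$: for an isometric extension every element of $E(Z,T)$ acts on the fibres as an isometry commuting with the $G$-action, and, choosing a minimal idempotent $u$ with $uy=y$, minimality yields fibre-preserving elements moving $z_1$ to $z_2$, which translates into the relating isometries preserving $Z$. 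Once this transitivity is in place, $\phi|_Z$ is a group extension with structure group $G'$, and together with $\psi|_Z$ it gives the required commutative diagram. I expect this enveloping-semigroup transitivity argument to be the delicate part; the invariant-metric construction and the isometry-bundle formalism are comparatively mechanical.
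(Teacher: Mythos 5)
A preliminary remark: the paper itself gives no proof of this statement — it is quoted from Ellis--Glasner--Shapiro, where it is proved with the algebraic (Ellis semigroup/Ellis group) theory of minimal flows. Your outline instead follows Furstenberg's isometric-extension construction, which is a legitimate alternative route; but as written it has a genuine gap at its very first step. You set $D(x,x')=\sup_{n\in\Z}\rho(T^nx,T^nx')$ on $R_\pi$ and assert it is ``continuous by equicontinuity''. Equicontinuity of an \emph{extension} only constrains pairs lying in a single fibre of $\pi$; to prove continuity of $D$ at $(x,x')$ you must compare $D(x_k,x_k')$ with $D(x,x')$ for pairs $(x_k,x_k')\to(x,x')$ lying in \emph{different} fibres, and the iterates $T^n$ are not equicontinuous on all of $X$ (unless the whole system is equicontinuous), so $\rho(T^nx_k,T^nx)$ is not controlled by $\rho(x_k,x)$. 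A priori $D$ is only lower semicontinuous. The missing assertion — that an equicontinuous extension of minimal systems admits a \emph{continuous} invariant fibre metric, i.e. is isometric — is a real theorem, not an observation, and its proof needs exactly the classical machinery you skip: equicontinuous implies distal extension; distal extensions of minimal systems are open, so $y\mapsto\pi^{-1}(y)$ is continuous in the Hausdorff metric; every point of $R_\pi$ is almost periodic, whence $D$ is invariant under the enveloping semigroup; and then one approximates $(x_k,x_k')$ by points of the minimal set $\overline{\mathcal{O}}\bigl((x,x'),T\times T\bigr)$ lying in the fibre over $\pi(x_k)$. The omission of openness also undercuts your later steps: without lower semicontinuity of fibres you cannot conclude that the uniform limits of $T^{n_k}|_F$ are \emph{onto} $X_y$, that all fibres are isometric to $F$, or that $Z_0$ is compact (a limit of onto isometries could fail to be onto, so your set of onto isometries need not be closed).

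By contrast, the step you single out as the delicate one — transitivity of $G'=\{g\in G: Zg=Z\}$ on the fibres of $\phi|_Z$ — is the easy one and needs no enveloping semigroup: if $z_1,z_2\in Z$ with $\phi(z_1)=\phi(z_2)$, write $z_2=z_1g$; since the right action of $g$ commutes with $T$, the set $Zg$ is again minimal, and it meets $Z$ (at $z_2$), so $Zg=Z$ and $g\in G'$. Similarly, in the ``if'' direction, ``pushing $D$ forward along $\psi$'' is not justified by freeness (which is neither assumed in the paper's Definition \ref{Group-extension} nor sufficient); what makes the pushforward well defined, symmetric and triangle-inequality-compatible is again a minimality argument: for each $g\in G$ the set $\{z\in Z:\psi(zg)=\psi(z)\}$ is closed and $T$-invariant, hence empty or all of $Z$, so $\psi$ is precisely the quotient of $Z$ by a closed subgroup $H\le G$ and the fibre metric descends to $H\backslash G$. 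In summary, your architecture is the correct classical one (and genuinely different from the cited Ellis-theoretic proof), but the places where you write ``by equicontinuity'' and ``standard'' are exactly where the content of the theorem is concentrated, and as written they are gaps.
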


Glasner showed that every distal extension has a RIM \cite[Propsition 3.8.]{G75}, for our purpose we need a little more.

\begin{prop}\label{Glasner-prop}
Let $(X,T)$ be a minimal system and let $\pi: (X,T)\rightarrow (Y,T)$ be
a distal extension. Then $\pi$ has a RIM with a section $\lambda$ such that ${\rm Supp} \lambda _y=\pi^{-1}(y)$ for all $y\in Y$.
\end{prop}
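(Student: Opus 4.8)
The plan is to build on Glasner's result that every distal extension $\pi \colon (X,T) \to (Y,T)$ of minimal systems has a RIM, and to upgrade the existence of \emph{some} section to the existence of a section $\lambda$ whose fibre measures have \emph{full} support on the fibres $\pi^{-1}(y)$. First I would invoke \cite[Proposition 3.8]{G75} to obtain a RIM with section $\mu \colon Y \to \mathcal{M}(X)$, so that $\mathrm{Supp}\,\mu_y \subseteq \pi^{-1}(y)$ and $y \mapsto \mu_y$ is a homomorphism of t.d.s. The only thing that may fail is that $\mathrm{Supp}\,\mu_y$ could be a proper closed subset of the fibre $\pi^{-1}(y)$; everything else we need is already guaranteed.

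The key structural input is minimality together with the homomorphism property of $\lambda$. I would argue that the assignment $y \mapsto \mathrm{Supp}\,\mu_y$ is an upper-semicontinuous, $T$-equivariant set-valued map, and that by minimality of $(Y,T)$ the ``size'' of the support is constant along orbits and hence essentially uniform. Concretely, the set $Y_0 = \{ y \in Y : \mathrm{Supp}\,\mu_y = \pi^{-1}(y)\}$ (or its natural analogue) is $T$-invariant; if it is nonempty it must be dense, and one then promotes this to full support everywhere. The cleanest route is to use the distal (hence equicontinuous-by-group-extension via Theorem~\ref{equi-extension} in the relevant reduction, or the PI-tower structure) setup to symmetrize: given the raw section $\mu$, one averages $\mu_y$ over the structure group acting on the fibre, or over the proximal/distal cells, to produce a new section $\lambda_y$ that is invariant under the fibre symmetries and therefore charges the whole fibre. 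Since the fibres of a distal extension carry a transitive enough group action in the group-extension model, the averaged measure has support equal to the full orbit, i.e. the full fibre.

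The main obstacle I anticipate is precisely this last point: arranging full support without destroying the homomorphism (continuity and equivariance) property of the section. Averaging over a compact group acting on fibres preserves continuity and commutes with $T$ because the action is by automorphisms of the system (Definition~\ref{Group-extension}), so $T(xg) = (Tx)g$; this is what lets the averaged section remain a homomorphism. The delicate part is that a general distal extension is only an \emph{inverse limit / tower} of group extensions rather than a single group extension, so I would carry out the averaging argument level by level along the tower and pass to the inverse limit, checking at each stage that full support is preserved under the bonding maps and that upper semicontinuity of the support does not collapse in the limit. Once full support is secured on a dense invariant set and the support map is shown to be continuous (using that $\lambda$ is a homomorphism into the compact metric space $\mathcal{M}(X)$ and that $y \mapsto \mathrm{Supp}\,\lambda_y$ is then lower semicontinuous as well), minimality forces $\mathrm{Supp}\,\lambda_y = \pi^{-1}(y)$ for \emph{all} $y \in Y$, completing the proof.
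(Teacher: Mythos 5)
Your ``cleanest route'' is, in substance, the paper's own proof: by Furstenberg's structure theorem for distal extensions, $\pi$ is a (possibly transfinite) tower of equicontinuous extensions, with inverse limits at limit ordinals; each equicontinuous extension is, by Theorem \ref{equi-extension}, a factor $\psi$ of a group extension $\phi\colon Z\to Y$; one forms the Haar average $\hat\lambda_{\phi(x)}(f)=\int_G f(xg)\,d\mu(g)$, whose support is the whole orbit $xG=\phi^{-1}(\phi(x))$, and pushes it down, $\lambda=\psi_*\circ\hat\lambda$, so that ${\rm Supp}\,\lambda_y=\psi(\phi^{-1}(y))=\pi^{-1}(y)$; then full support is propagated up the tower through compositions and inverse limits. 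Two details you gloss over: the structure group acts on $Z$, not on the fibres of $\pi$ (an equicontinuous extension is only a \emph{factor} of a group extension), so the averaging cannot be performed on $X$ itself but must be done upstairs and pushed down via $\psi_*$; and in the composition step one needs the factor maps to be open so that $\eta_z(U)=\int_Y\lambda^1_y(U)\,d\lambda^2_z(y)>0$ --- the paper uses this explicitly (openness is in fact automatic once a continuous section with full support exists, since then $\pi_1(U)=\{y:\lambda^1_y(U)>0\}$ is open).

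The genuine gap is your closing mechanism: securing full support on a dense invariant set and invoking lower semicontinuity of $y\mapsto{\rm Supp}\,\lambda_y$ plus minimality to force full support everywhere. This argument points in the wrong direction. For a weak$^*$-continuous section, the portmanteau theorem gives $\liminf_n\lambda_{y_n}(U)\ge\lambda_y(U)$ for open $U$; equivalently, supports can only \emph{shrink} in the limit (that is what lower semicontinuity of the support map says), whereas your promotion needs them not to shrink, i.e.\ upper semicontinuity, which fails in general: $\lambda_{y_n}=(1-\tfrac1n)\delta_{a_n}+\tfrac1n\delta_{b_n}$ has full support on a two-point fibre $\{a_n,b_n\}$ yet converges to $\delta_a$, whose support omits $b=\lim b_n$. (Moreover, nonemptiness of your set $Y_0$ for an \emph{arbitrary} RIM section is never established, so the first half of your sketch --- upgrading Glasner's raw section by invariance and density --- has no starting point.) The paper needs no such promotion: the Haar construction at each equicontinuous level and the composition/inverse-limit steps produce a section with ${\rm Supp}\,\lambda_y=\pi^{-1}(y)$ for \emph{every} $y$ at every stage. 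If you delete the density/semicontinuity step and execute the tower-plus-Haar-averaging argument level by level, your proof coincides with the paper's.
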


\begin{proof}
One can find the proof of the first part of the statements in \cite{G75} or \cite[Chapter V. (6.5)]{Vr}. Since we need
to show the second part of the statements,  we give the whole proof of the results for completeness.

\medskip
Let $\pi: (X,T)\rightarrow (Y,T)$ be a factor between two minimal systems. Then by Furstenberg structure theorem for distal extensions $\pi$ is distal if and only if there exists a countable ordinal $\zeta$
and a directed family of factors $(X_\theta, T), \theta \le \zeta$ such that
\begin{enumerate}
  \item $X_0=Y$ and $X_\zeta=X$.
  \item For $\theta<\zeta$ the extension $\pi_\theta : X_{\theta+1}\rightarrow X_\theta$ is equicontinuous.
  \item For a limit ordinal $\xi \le \zeta$, $X_\xi={\lim\limits_{\longleftarrow}}_{\theta<\xi} X_\theta$.
\end{enumerate}

For convenience if a section satisfies ${\rm Supp} \lambda _y=\pi^{-1}(y)$ for all $y\in Y$ then we say it is a {\em section with full support}.
Hence to prove the result, we need to show (I) each equicontinuous extension has a section with full support; (II) a (transfinite) composition of RIM with full support section has a RIM with full support section.

\medskip

\noindent {\em (I). Each equicontinuous extension has a section $\lambda$ and ${\rm Supp} \lambda _y=\pi^{-1}(y)$ for all $y\in Y$.}

\medskip

Let $\pi: X\rightarrow Y$ be an equicontinuous extension of minimal systems. Now by Theorem \ref{equi-extension} we have the following diagram with $\phi$ a group extension:
\medskip

$$
\xymatrix{
& X \ar[d]_{\pi}
& {Z}\ar[l]_{\psi} \ar[ld]^{\phi} \\
&Y}
$$

\medskip
Now assume that $\phi$ satisfies all conditions in Definition \ref{Group-extension}. For $x\in Z$ and $f\in C(Z)$, let
$$\hat\lambda_{\phi(x)}(f)=\int_K f(xg) d\mu (g),$$
where $\mu$ is the Haar measure on the group $G$.
Then $Y\rightarrow \mathcal{M}(Z), y\mapsto \hat\lambda_y$ is a section for $\phi$.
Since for all $x\in Z$, $\phi^{-1}(\phi(x))=xG$ and the definition of $\hat\lambda$, we have $${\rm Supp} \hat\lambda _{\phi(x)}={\rm Supp} \mu =x G=\phi^{-1}(\phi(x))$$ for all $x\in Z$.

Now let $\lambda=\psi_*\circ \hat\lambda: Y\rightarrow \mathcal{M}(X)$, where $\psi_*:\mathcal{M}(Z)\rightarrow \mathcal{M}(X)$ is the map induced by $\psi: Z\rightarrow X$. Then $\lambda$ is a section for $\pi$, and
$${\rm Supp} \lambda_y=\psi({\rm Supp} \hat\lambda _y)=\psi(\phi^{-1}(y))=\pi^{-1}(y)$$ for all $y\in Y$. This ends the proof for equicontinuous extensions.

\medskip

\noindent {\em (II) A (transfinite) composition of RIM with full support section has a RIM with full support section.}

\medskip
To prove the statement, it suffices to show two cases. One is the composition of two extensions with the properties, the other is the inverse limit of extensions.

First let $\pi_1:X\lra Y$ and $\pi_2:Y\lra Z$ be open factor extensions between minimal systems and let $\pi_1,\pi_2$ have RIM with full support sections.
Let $\lambda^1:Y\lra {\mathcal{M}}(X)$ and $\lambda^2:Z\lra {\mathcal{M}}(Y)$ be two sections.
Define $\eta:Z\lra {\mathcal{M}}(X)$ such that for each $z\in Z$
$$\eta_z(f)=\int_Y(\int_Xf\ \text{d}\lambda^1_y(x))\ \text{d}\lambda^2_z(y),$$
for each $f\in C(X)$. To check that $\eta$ is a section we need to show that $\eta$ is continuous and
$(\pi_2\circ \pi_1)^*(\eta_z)=\delta_z$. The continuity of $\eta$ follows from that of $\lambda^i$, $i=1,2$.
Now we check that $(\pi_2\circ \pi_1)^*(\eta_z)=\delta_z$. In fact
\begin{eqnarray*}
(\pi_2\circ \pi_1)^*(\eta_z)(B)&=&\eta_z(\pi_1^{-1}\circ \pi_2^{-1}(B))\\
&=&\int_Y(\int_X 1_{\pi_2^{-1}(B)} d \delta_y)d\lambda^2_z(y)=\int_Y 1_Bd\delta_z =\delta_z(B),
\end{eqnarray*}
for each $B\in {\mathcal{B}}(Z)$, since $\lambda^i$, $i=1,2$ is a section.

Finally we show $\text{Supp}(\eta_z)=(\pi_2\circ \pi_1)^{-1}(z)$ for each $z\in Z$.
Fix $z\in Z$ and assume that $x\in (\pi_2\circ \pi_1)^{-1}(z)$ and $U$ is an open neighborhood of $x$.
Then
$$\eta_z(U)=\int_Y(\int_X 1_U d\lambda^1_y(x))d\lambda^2_z(y)=\int_Y\lambda^1_y(U)d\lambda^2_z(y) >0,$$
since (1) $\pi_1(U)$ is open in $Y$, and $\pi_2\circ \pi_1(U)$ is open in $Z$, (2) for $y\in \pi_1(U)$,
$\lambda^1_y(U)>0$ and $\lambda^2_z(\pi_1(U))>0$.

\medskip

Next we discuss the inverse limit. Assume that $X$ is an inverse limit of $X_n$.
Let $\pi_n:X\lra X_n$ and $\pi_{n,m}:X_n\lra X_m$ if $n\ge m$ (we set $\pi_{n,n}=id$). For any $x\in X_1$ and $f\in C(X)$ define
$$\eta_x(f)=\lim_{n\lra \infty} (\eta_n)_x(f_n),$$ if $f$ is a limit of $f_n\circ\pi_n$ with $f_n\in C(X_n)$.
Here $(\eta_n)_x\in {\mathcal{M}}(X_n)$ is defined by induction using previous argument.

It is easy to check that $\eta_x$ is well defined. Moreover, if $f=f_n\pi_n$ for some $n\in\N$ then
$\eta_x(f)=(\eta_{n+i})_x(f_n\pi_{n+i,n})=(\eta_n)_x(f_n)$ for $i\ge 0$.

Then we check that $\eta:X_1\lra {\mathcal{M}(X)}$
is a section. To show the continuity of $\eta$, assume that $y_n\lra y$. We will show $\eta_{y_n}\lra \eta_y$, i.e.
for each $f\in C(X), \eta_{y_n}(f)\lra \eta_y(f)$. This follows from the facts that when $f$ is close to $f_k\pi_k$
in $C(X)$, $\eta_{z}(f)$ is close to $\eta_z(f_k\pi_k)=(\eta_k)_z(f_k)$ for each $z\in X_1$ uniformly;
and $\eta_k:X_1\lra {\mathcal{M}}(X_k)$ is continuous.

We are left to show that
$(\pi_1)^*\eta_{x_1}(B)=\delta_{x_1}(B)$ for each $B\in {\mathcal{B}}(X_1)$. In fact
$$(\pi_1)^*\eta_{x_1}(B)=\eta_{x_1}(\pi^{-1}_1(B))=\lim_{n\lra \infty} (\eta_n)_{x_1}(\pi_{n,1}^{-1}(B))=\delta_{x_1}(B).$$

To show $\eta$ is full, we note that
$\{\pi_n^{-1}(U_n): U_n\ \text{is open in}\ X_n,\ n\in\N\}$ is a base for the topology of $X$. Fix $x_1\in X_1$
and let $x\in \pi_1^{-1}(x_1)$ and $U$ be an open neighborhood of $x$. Then there is $n\in\N$ such that $U\supset \pi_n^{-1}(U_n)$
and $x_1\in \pi_{n,1}(U_n)$,
where $U_n$ is an open set in $X_n$. Then $\eta_{x_1}(U)\ge (\eta_n)_{x_1}(U_n)>0$. The proof is completed.
\end{proof}

Now we have the following result:

\begin{prop}\label{shaosong}
Let $(X,T)$ be a strictly ergodic system with unique invariant measure $\mu$ and let $\pi: (X,T)\rightarrow (Y,T)$ be a
distal extension. Let $\displaystyle \mu=\int_Y\mu_y\ d\ \nu(y)$ be the disintegration of $\mu$ over $\nu$, where $v=\pi_*(\mu)$.
Then there is $Y_0\subset Y$ with full measure such that
for each $y\in Y_0$, ${\rm Supp}(\mu_y)=\pi^{-1}(y)$.
\end{prop}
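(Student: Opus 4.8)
The plan is to leverage Proposition \ref{Glasner-prop}, which is the real engine of the argument, and then to use unique ergodicity to show that the full-support section it produces coincides $\nu$-almost everywhere with the disintegration $(\mu_y)$. First I would apply Proposition \ref{Glasner-prop} to the distal extension $\pi:(X,T)\to (Y,T)$ to obtain a continuous section $\lambda:Y\to \mathcal{M}(X)$ with ${\rm Supp}\,\lambda_y=\pi^{-1}(y)$ for every $y\in Y$, and with $\pi^*\circ\lambda$ equal to the embedding $y\mapsto \delta_y$. The whole problem then reduces to identifying $\lambda_y$ with $\mu_y$ for $\nu$-a.e. $y$, after which the full support is immediate.

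Next I would form the barycenter measure $m=\int_Y \lambda_y\,d\nu(y)\in\mathcal{M}(X)$ and check that it is $T$-invariant. Since a section is by definition a homomorphism of t.d.s., one has $\lambda_{Ty}=T_*\lambda_y$, where $T_*$ is the push-forward action on $\mathcal{M}(X)$ given by $(T_*m)(A)=m(T^{-1}A)$. Combining this with the $T$-invariance of $\nu=\pi_*\mu$, a routine change of variables gives $\int_X f\circ T\,dm=\int_X f\,dm$ for all $f\in C(X)$, so $m\in\mathcal{M}(X,T)$. Moreover $\pi_* m=\int_Y \pi_*\lambda_y\,d\nu(y)=\int_Y \delta_y\,d\nu(y)=\nu$, so $m$ projects onto $\nu$ under $\pi$.

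Now I would invoke the unique ergodicity of $(X,T)$: since $m$ is a $T$-invariant probability measure and $\mathcal{M}(X,T)=\{\mu\}$, necessarily $m=\mu$. Thus $(\lambda_y)_{y\in Y}$ is a measurable (indeed continuous) family of probability measures with $\lambda_y$ concentrated on $\pi^{-1}(y)$ and with $\int_Y \lambda_y\,d\nu(y)=\mu$; that is, $(\lambda_y)$ is a disintegration of $\mu$ over $\nu$. By the essential uniqueness of the disintegration over a factor of compact metric systems, $\lambda_y=\mu_y$ for $\nu$-a.e. $y$. Taking $Y_0$ to be the corresponding full-measure set, we conclude ${\rm Supp}(\mu_y)={\rm Supp}(\lambda_y)=\pi^{-1}(y)$ for every $y\in Y_0$, as required.

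The genuine content lies entirely in Proposition \ref{Glasner-prop}; once the full-support section is in hand, the remaining steps are essentially formal. Accordingly, I expect the only delicate points to be the bookkeeping in the invariance computation (keeping the push-forward conventions straight) and the appeal to uniqueness of disintegration. The conceptual crux worth emphasizing is that unique ergodicity forces $\mu$ to be exactly the barycenter of the full-support section, so that the fiberwise full support is inherited by $\mu_y$ for almost every $y$.
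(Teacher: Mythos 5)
Your proposal is correct and follows essentially the same route as the paper: apply Proposition \ref{Glasner-prop} to get a full-support section $\lambda$, observe that its barycenter $\int_Y \lambda_y\,d\nu(y)$ is $T$-invariant and hence equals $\mu$ by unique ergodicity, and then identify $\lambda_y$ with $\mu_y$ for $\nu$-a.e.\ $y$ by uniqueness of the disintegration. The only difference is that you spell out the routine invariance and projection computations that the paper leaves implicit.
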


\begin{proof}
Since $\pi$ is distal, it has a RIM by Proposition \ref{Glasner-prop}. Let $\lambda: Y\rightarrow M(X)$ be a section for $\pi$.
Then $\widetilde{\mu}=\int\lambda_y\ d\ \nu(y)$ is an invariant measure of $(X,T)$. By unique ergodicity,
$\widetilde{\mu}=\mu$. Since the disintegration is unique, there is $Y_0\subset Y$ with full measure such that
for each $y\in Y_0$, $\mu_y=\lambda_y$. Thus the result follows from Proposition \ref{Glasner-prop}.
\end{proof}

The following lemmas come from \cite{D-Y}.

\begin{lem}\label{mes-top} Let $(X,T)$ be a minimal system of order $n$; then the maximal measurable
and topological factors of order $d$ coincide, where $d\le n$.
\end{lem}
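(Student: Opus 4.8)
The plan is to prove that for a minimal system $(X,T)$ of order $n$, the maximal measurable factor of order $d$ and the maximal topological factor of order $d$ coincide, for any $d \le n$. Since $(X,T)$ is a system of order $n$, by Theorem \ref{HKM} it is an inverse limit of minimal $(n-1)$-step nilsystems, and in particular it is minimal distal. The maximal topological factor of order $d$ is $X/\RP^{[d]}(X,T)$ by Theorem \ref{thm0}, a $d$-step pro-nilsystem. I would first reduce to the case where $(X,T)$ is itself a single minimal nilsystem (rather than an inverse limit), since factors and maximal factors of order $d$ are compatible with inverse limits; the general statement then follows by passing to the limit.

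The key observation is that for a minimal nilsystem, there is a \emph{unique} invariant measure: minimal nilsystems are uniquely ergodic (indeed strictly ergodic, by the classical theory of Parry and Auslander--Green--Hahn). Thus $(X,T)$ is strictly ergodic, and the relevant measure $\mu$ is the Haar measure. First I would identify the maximal measurable factor of order $d$ with the factor $(Z_d,\mathcal{Z}_d,\mu_d,T)$ coming from Theorem \ref{HK}, which is a measure-theoretic inverse limit of $d$-step nilsystems. The heart of the argument is to show that this measurable factor is realized \emph{topologically} by the map onto $X/\RP^{[d]}$. Because $(X,T)$ is distal and strictly ergodic, the factor map $\pi\colon X \to X/\RP^{[d]}$ is a distal extension between uniquely ergodic systems, so by Proposition \ref{shaosong} the disintegration of $\mu$ over $\nu = \pi_*\mu$ has full-support conditional measures on almost every fiber. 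This rigidity is exactly what forces the measurable and topological characteristic factors to agree: the topological factor $X/\RP^{[d]}$ is already a $d$-step pro-nilsystem, hence a measurable factor of order $\le d$, while the universality of $Z_d$ shows $X/\RP^{[d]}$ is a factor of $Z_d$; conversely the full-support property prevents $Z_d$ from being strictly larger, since any further measurable collapsing would have to respect the topology on the fibers.

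More precisely, I would argue as follows. On one hand, $X/\RP^{[d]}$ is a topological system of order $d$, and being a topological factor of the strictly ergodic $(X,T)$ it is itself a measurable factor; by Theorem \ref{HK} it must therefore be a measurable factor of $Z_d$, giving a map $Z_d \to X/\RP^{[d]}$ (mod $\mu$). On the other hand, I must show $Z_d \to X/\RP^{[d]}$ is an isomorphism mod $\mu$, equivalently that the measure-theoretic factor $Z_d$ carries no information beyond what $X/\RP^{[d]}$ sees. Here I would use the full-support disintegration from Proposition \ref{shaosong}: the conditional measures $\mu_y$ sit on the whole fibers $\pi^{-1}(y)$, and the pro-nilsystem structure of $X$ (as an inverse limit of nilsystems, each a tower of equicontinuous group extensions) lets me identify the $\sigma$-algebra $\mathcal{Z}_d$ concretely with the pullback of the Borel $\sigma$-algebra of $X/\RP^{[d]}$ up to $\mu$-null sets.

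The main obstacle, I expect, is precisely this last identification: showing that the measure-theoretic characteristic factor $\mathcal{Z}_d$ coincides $\mu$-almost everywhere with the $\sigma$-algebra generated by the topological factor $X/\RP^{[d]}$, rather than being strictly finer. For a general ergodic system these can differ, and the whole point of the hypothesis ``$(X,T)$ is a system of order $n$'' is to rule this out. The cleanest route is to reduce to a single nilsystem $G/\Gamma$ and exploit that its maximal $d$-step nilfactor, computed measurably via the Host--Kra seminorms, equals $G/(\Gamma G_{d+1})$, which is exactly the maximal topological $d$-step nilfactor $X/\RP^{[d]}$; the full-support conditional measures guarantee that the measurable identification descends to the topological one without loss. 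I would then conclude the general case by taking inverse limits, using that both the $\RP^{[d]}$ construction and the measurable characteristic factor commute with inverse limits of the tower defining $(X,T)$.
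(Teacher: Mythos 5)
First, a point of comparison: the paper does not prove this lemma at all; it is quoted from \cite{D-Y} (``The following lemmas come from \cite{D-Y}''), so your proposal can only be measured against the argument in that reference, which is essentially the concrete route you settle on in your final paragraph. That route is correct: a minimal nilsystem $X=G/\Gamma$ is strictly ergodic with Haar measure, its maximal topological factor of order $d$ is the quotient nilsystem $G/(G_{d+1}\Gamma)$ (the description of $\RP^{[d]}$ for nilsystems, see \cite{HKM}), and its maximal measurable factor of order $d$ is the same quotient (the computation of the Host--Kra factors of an ergodic nilsystem in \cite{HK05}); the general case then follows by inverse limits, where on the topological side Theorem \ref{thm0} identifies $X/\RP^{[d]}(X)$ with the inverse limit of the $X_i/\RP^{[d]}(X_i)$, and on the measurable side one uses that the Host--Kra factors commute with inverse limits. (Minor slip: by the convention of Theorem \ref{HKM}, a system of order $n$ is an inverse limit of $n$-step, not $(n-1)$-step, minimal nilsystems.)

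However, the middle of your proposal --- the claim that Proposition \ref{shaosong} supplies the ``rigidity'' forcing $Z_d$ to coincide with $X/\RP^{[d]}$ --- is not a valid argument and cannot be repaired as stated. Full support of the conditional measures $\mu_y$ on the fibers $\pi^{-1}(y)$ in no way prevents a measurable factor from being strictly finer than a topological one: the example constructed in the last section of this very paper is a strictly ergodic distal system (so Proposition \ref{shaosong} applies to the projection $\pi:\T^2\to\T$, with full-support disintegration) whose measurable Kronecker factor $Z_1$ is all of $\T^2$, while its maximal equicontinuous factor $X_1$ is only $\T$. So ``strictly ergodic plus distal plus full-support fibers'' is demonstrably insufficient; the hypothesis that $X$ is a system of order $n$ must enter, and it enters precisely through the nilpotent computation $Z_d(G/\Gamma)=G/(G_{d+1}\Gamma)=X/\RP^{[d]}$ that you defer to at the end. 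Your write-up becomes a correct proof if the two middle paragraphs are deleted and the last paragraph, suitably expanded (unique ergodicity of minimal nilsystems, the two quotient computations, and the two inverse-limit compatibilities), is made the whole argument.
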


\begin{lem}\label{5p-1} Let $(X,T)$ be a minimal $\infty$-step pro-nilsystem. Then $(X,T)$ is an inverse limit of minimal $d_i$-step
nilsystems. 
\end{lem}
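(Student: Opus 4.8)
The plan is to realize $X$ as a sequential inverse limit of its nilsystem factors, using the hypothesis $\RP^{[\infty]}(X,T)=\bigcap_{d\ge 1}\RP^{[d]}(X,T)=\Delta$ to guarantee that these factors separate points. First, for each $d\in\N$ let $\pi_d\colon X\to X_d:=X/\RP^{[d]}(X,T)$ be the maximal $d$-step pro-nilfactor supplied by Theorem \ref{thm0}; by Theorem \ref{HKM} each $X_d$ is an inverse limit of minimal $d$-step nilsystems. The maps are compatible (since $\RP^{[d+1]}\subseteq\RP^{[d]}$ induces a factor map $X_{d+1}\to X_d$), and because $\bigcap_d\RP^{[d]}=\Delta$ a routine compactness argument identifies $X\cong\lim\limits_{\longleftarrow}(X_d,T)$.

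Next I would show that the family $\mathcal N$ of all factors $\pi_N\colon X\to N$ with $N$ a minimal nilsystem separates points: given $x\ne y$, choose $d$ with $(x,y)\notin\RP^{[d]}$, so that $\pi_d(x)\ne\pi_d(y)$ in $X_d$; since $X_d$ is an inverse limit of $d$-step nilsystems, one of the coordinate nilsystems already separates $\pi_d(x)$ from $\pi_d(y)$, and composing with $\pi_d$ yields a nilsystem factor of $X$ separating $x$ and $y$.

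The crucial step is that $\mathcal N$ is directed under refinement. Given $\pi_1\colon X\to N_1$ and $\pi_2\colon X\to N_2$ with $N_i$ a $d_i$-step minimal nilsystem, let $W:=(\pi_1\times\pi_2)(X)\subseteq N_1\times N_2$. Then $W$ is a factor of $X$ (hence minimal) and a minimal subsystem of the product $N_1\times N_2$, which is a $\max\{d_1,d_2\}$-step nilsystem. Invoking the structure theory of nilsystems — a minimal subsystem of a nilsystem is the orbit closure of any of its points and hence a sub-nilmanifold, so again a finite-step minimal nilsystem (see \cite{HK18}) — I obtain $W\in\mathcal N$ with both $N_1$ and $N_2$ factors of $W$. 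I expect this to be the main obstacle, since it is the only point requiring genuine input from nilpotent geometry rather than the abstract factor formalism.

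Finally I would extract a countable cofinal sequence. As $X$ is metrizable, $C(X)$ is separable; fix a dense sequence $(f_k)$. Because $\mathcal N$ separates points and $\bigcup_{N\in\mathcal N}\pi_N^*C(N)$ is a subalgebra containing the constants, the Stone--Weierstrass theorem shows it is dense in $C(X)$, so each $f_k$ lies within $1/k$ of some element of $\pi_{M_k}^*C(M_k)$ with $M_k\in\mathcal N$. Using directedness I then build an increasing chain $N_{(1)}\leftarrow N_{(2)}\leftarrow\cdots$ of nilsystem factors with $N_{(i)}$ refining $M_1,\dots,M_i$ and $N_{(i-1)}$; consequently $\bigcup_i\pi_{(i)}^*C(N_{(i)})$ is dense in $C(X)$, which is precisely the condition for $X=\lim\limits_{\longleftarrow}(N_{(i)},T)$ to be an inverse limit of the minimal $d_i$-step nilsystems $N_{(i)}$, completing the proof.
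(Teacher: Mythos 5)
There is nothing in the paper to compare against here: the lemma is stated without proof and simply attributed to \cite{D-Y}. Your argument is correct and complete, and it is in substance the standard proof of this fact: the quotients $X_d=X/\RP^{[d]}$ are $d$-step pro-nilsystems (Theorems \ref{thm0} and \ref{thm-1}), hence inverse limits of minimal $d$-step nilsystems (Theorem \ref{HKM}); since $\RP^{[\infty]}=\bigcap_d\RP^{[d]}=\Delta$, the resulting countable family of nilsystem factors of $X$ separates points; and a directed, point-separating, countable family of factors of a compact metric system yields a sequential inverse-limit presentation. You also correctly isolate the only step that needs genuine input from nilpotent geometry, namely directedness. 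Note that the paper's abstract machinery alone (Theorem \ref{thm0} applied to the minimal system $W=(\pi_1\times\pi_2)(X)\subseteq N_1\times N_2$) would only show that $W$ is a pro-nilsystem of step $\max\{d_1,d_2\}$, which would leave you with an inverse limit of inverse limits and no progress; the sub-nilmanifold theorem (orbit closures in nilsystems are nilmanifolds and the induced translations are nilsystems, see \cite{HK18}) is exactly what upgrades $W$ to an honest nilsystem, and your appeal to it is legitimate since $W$, being minimal, is the orbit closure of any of its points in the nilsystem $N_1\times N_2$. The closing Stone--Weierstrass argument is also sound: density of $\bigcup_i\pi_{(i)}^*C(N_{(i)})$ in $C(X)$ gives injectivity of the induced map $X\to\lim\limits_{\longleftarrow}N_{(i)}$, while compactness and surjectivity of each $\pi_{(i)}$ give surjectivity, so $X$ is conjugate to the inverse limit. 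I see no gaps.
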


Recall that for $d\in \N$, $X_d=X/\RP^{[d]}$.

\begin{lem}\label{equality}  Let $(X,T)$ be a minimal system. If $X_n=X_{n+1}$ then $X_k=X_n$ for any $k\ge n$.
\end{lem}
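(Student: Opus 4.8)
The plan is to fix an arbitrary $k\ge n$ and prove directly that $X_k=X_n$, rather than to argue inductively on the index. Write $\pi_k\colon X\to X_k$ for the canonical factor map, so that $R_{\pi_k}=\RP^{[k]}(X)$ since $X_k=X/\RP^{[k]}(X)$. Since the hypothesis is $X_n=X_{n+1}$, i.e. $\RP^{[n]}(X)=\RP^{[n+1]}(X)$, it suffices to show that $X_k$ is a system of order $n$; for then $\RP^{[n]}(X_k)=\Delta$, and Theorem \ref{thm0}(1) applied to $\pi_k$ gives $\pi_k\times\pi_k(\RP^{[n]}(X))=\RP^{[n]}(X_k)=\Delta$, hence $\RP^{[n]}(X)\subseteq R_{\pi_k}=\RP^{[k]}(X)$. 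Combined with the trivial inclusion $\RP^{[k]}(X)\subseteq\RP^{[n]}(X)$ (valid because $k\ge n$) this yields $\RP^{[k]}(X)=\RP^{[n]}(X)$, that is $X_k=X_n$. The whole point is therefore to propagate the collapse $\RP^{[n]}=\RP^{[n+1]}$ from $X$ down to the nilpotent building blocks of $X_k$.

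First I would push the hypothesis forward. For any factor map $\pi\colon X\to Y$ of minimal systems, Theorem \ref{thm0}(1) gives $\pi\times\pi(\RP^{[d]}(X))=\RP^{[d]}(Y)$ for every $d$; applying this with $d=n$ and $d=n+1$ and using $\RP^{[n]}(X)=\RP^{[n+1]}(X)$ shows that $\RP^{[n]}(Y)=\RP^{[n+1]}(Y)$ for \emph{every} minimal factor $Y$ of $X$. Now $X_k$ is a system of order $k$, hence by Theorem \ref{HKM} an inverse limit of minimal $k$-step nilsystems $Z_i=G^{(i)}/\Gamma^{(i)}$, each of which is a factor of $X$. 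Thus every $Z_i$ satisfies $\RP^{[n]}(Z_i)=\RP^{[n+1]}(Z_i)$, and the problem is reduced to the following assertion about a single minimal nilsystem.

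The heart of the argument is then the claim: if a minimal nilsystem $Z=G/\Gamma$ satisfies $\RP^{[n]}(Z)=\RP^{[n+1]}(Z)$, then in fact $\RP^{[n]}(Z)=\Delta$. After the standard reduction for minimal nilsystems one may assume $G$ is generated by its identity component together with the translation element, so that the commutator subgroups $G_j$ are connected for $j\ge 2$, each product $G_j\Gamma$ is a closed subgroup whose identity component is $G_j$, the maximal factor of order $d$ is $Z_d=G/G_{d+1}\Gamma$, and $\RP^{[d]}(Z)=\{(g\Gamma,g'\Gamma):g^{-1}g'\in G_{d+1}\Gamma\}$ (see \cite{HKM,HK18}). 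The hypothesis now reads $G_{n+1}\Gamma=G_{n+2}\Gamma$, i.e. $G_{n+1}\subseteq G_{n+2}\Gamma$. Since $G_{n+1}$ is connected and the identity component of $G_{n+2}\Gamma$ is $G_{n+2}$, this forces $G_{n+1}\subseteq G_{n+2}$, whence $G_{n+1}=G_{n+2}=[G_{n+1},G]$. Iterating gives $G_{n+1}=[G_{n+1},G]=\cdots=G_{n+1+m}$ for all $m$, so nilpotency of $G$ yields $G_{n+1}=\{e\}$; therefore $G_{n+1}\Gamma=\Gamma$, $Z_n=Z$, and $\RP^{[n]}(Z)=\Delta$, proving the claim.

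Finally I would reassemble. Each $Z_i$ is a system of order $n$, so $\RP^{[n]}(X_k)=\Delta$: any pair in $\RP^{[n]}(X_k)$ maps into $\RP^{[n]}(Z_i)=\Delta$ for every $i$ by Theorem \ref{thm0}(1), and the projections $X_k\to Z_i$ jointly separate points of the inverse limit. Hence $X_k$ is a system of order $n$, and by the reduction of the first paragraph $X_k=X_n$, which proves the lemma for every $k\ge n$. I expect the only genuine obstacle to lie in the nilsystem step: one must carry out the standard reduction to a group $G$ for which $G_{n+1}$ is connected and $G_{n+2}\Gamma$ is closed with identity component $G_{n+2}$ (so that ``a connected subgroup contained in $G_{n+2}\Gamma$ already lies in $G_{n+2}$'' is legitimate), and one must invoke the precise lower–central–series description of $\RP^{[d]}$ for nilsystems. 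Once these standard facts are in place the commutator computation is immediate, and the abstract functorial input from Theorem \ref{thm0} does all the remaining bookkeeping.
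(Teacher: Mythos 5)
The paper offers no proof of this lemma at all---it is quoted from \cite{D-Y}---so there is nothing internal to compare against; judged on its own, your proof is correct. The route you take (transfer the collapse $\RP^{[n]}(X)=\RP^{[n+1]}(X)$ to every minimal factor of $X$ via Theorem~\ref{thm0}(1); decompose the $k$-step pro-nilsystem $X_k$ into minimal nilsystems via Theorem~\ref{HKM}; and, on a single minimal nilsystem $G/\Gamma$, use the identification of $\RP^{[d]}$ with the $G_{d+1}$-orbit relation together with connectedness of the commutator subgroups $G_j$, $j\ge 2$, to get $G_{n+1}\subseteq G_{n+2}$, hence $G_{n+1}=[G_{n+1},G]$, hence $G_{n+1}=\{e\}$ by nilpotency) is exactly the standard argument behind the citation, and the nilsystem facts you import (reduction to $G$ spanned by $G^0$ and the translation, closedness of $G_j\Gamma$ with identity component $G_j$, and $\RP^{[d]}(G/\Gamma)=\{(g\Gamma,g'\Gamma):g^{-1}g'\in G_{d+1}\Gamma\}$) are indeed available in \cite{HKM,HK18}. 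Two minor streamlinings: the conclusion of your first paragraph is Theorem~\ref{thm0}(2) verbatim ($X_k$ is a system of order $n$ if and only if $\RP^{[n]}(X)\subset R_{\pi_k}$), so you can cite it rather than re-derive it from part (1); and it is worth saying explicitly that the connectedness step uses $n+1\ge 2$, which holds because $n\in\N$ means $n\ge 1$ in this paper's conventions.
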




\medskip

Now it is time to give the main result of this section.

\begin{thm} Let $(X,T)$ be a unique ergodic minimal distal system such that for each $d\ge 1$,
$Z_d$ is isomorphic to $X_d$. Then for $d\ge 1$, $\AP^{[d]}=\RP^{[d]}$.

Consequently, for a
minimal $\infty$-nilsystem, we have for $d\ge 1$, $\AP^{[d]}=\RP^{[d]}$.
\end{thm}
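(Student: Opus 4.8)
The plan is to prove the two containments $\AP^{[d]} \subseteq \RP^{[d]}$ and $\RP^{[d]} \subseteq \AP^{[d]}$ separately; since the first is already recorded in the Remark following Definition \ref{arithm}, the whole content lies in establishing $\RP^{[d]} \subseteq \AP^{[d]}$. The strategy is to combine the measurable description of $\AP^{[d]}$ furnished by Theorem \ref{fibre-ap} with the hypothesis $Z_d \cong X_d$, using distality plus unique ergodicity to upgrade a full-measure statement into the everywhere statement that a topological relation demands.

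First I would pass to the factor $\pi \colon (X,T) \to (X_d, T) = (X/\RP^{[d]}, T)$, which by Theorem \ref{thm0} is the maximal $d$-step pro-nilfactor, and which by the hypothesis is isomorphic to the measurable factor $Z_d$. Since $(X,T)$ is distal, $\pi$ is a distal extension, so Proposition \ref{shaosong} applies: writing the disintegration $\mu = \int_{X_d} \mu_y \, d\nu(y)$ of the unique invariant measure over $\nu = \pi_*\mu$, there is a full-measure set $Y_0 \subseteq X_d$ with ${\rm Supp}(\mu_y) = \pi^{-1}(y)$ for every $y \in Y_0$. The point of this is that the measure $\lambda_d = \int_{X_d} \mu_y \times \mu_y \, d\nu(y)$, whose support is $L_d^\mu$ by definition, then has support equal to the closure of $\bigcup_{y \in Y_0} \pi^{-1}(y) \times \pi^{-1}(y)$, i.e. to $R_\pi = \RP^{[d]}$ (using that $Z_d \cong X_d$ identifies the measurable nilfactor $Z_d$ with the topological one $X_d$, so that the conditional measures defining $L_d^\mu$ are exactly the fibre-squared measures over $\pi$). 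Thus $L_d^\mu = \RP^{[d]}$.

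Combining this identification with Theorem \ref{fibre-ap}, which asserts $L_d^\mu \subseteq \AP^{[d]}(X)$, yields $\RP^{[d]} = L_d^\mu \subseteq \AP^{[d]}(X)$, and together with the reverse inclusion this gives $\AP^{[d]} = \RP^{[d]}$ for each $d \ge 1$. For the consequence about a minimal $\infty$-step pro-nilsystem, I would invoke Lemma \ref{5p-1} to write $(X,T)$ as an inverse limit of minimal $d_i$-step nilsystems; on each such nilfactor the measurable and topological $d$-step nilfactors coincide by Lemma \ref{mes-top}, the system is strictly ergodic and distal, so the main theorem applies there, and one then transfers the conclusion $\AP^{[d]} = \RP^{[d]}$ to the inverse limit using the inverse-limit compatibility of $\AP^{[d]}$ established in the Proposition on inverse limits in Section 3 (and the analogous, standard behaviour of $\RP^{[d]}$).

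The main obstacle I expect is the precise identification ${\rm Supp}(\lambda_d) = R_\pi$, i.e. verifying that the support of the fibre-product measure is \emph{all} of $\RP^{[d]}$ rather than merely a full-measure subset of it. The inclusion ${\rm Supp}(\lambda_d) \subseteq R_\pi$ is automatic, but the reverse requires knowing that the fibres $\pi^{-1}(y)$ are fully supported \emph{for a $\nu$-conull set of $y$} and then taking closures carefully: because $\RP^{[d]}$ is closed and the conull set $Y_0$ is dense in $X_d$ (minimality forces $\nu$ to have full topological support), the closure of the fibre-squares over $Y_0$ should recover every fibre-square, hence all of $R_\pi$. Making this density-and-continuity argument rigorous — in particular controlling how the conditional measures $\mu_y$ vary as $y$ ranges over $X_d$ and why a limit of fully-supported fibre-squares stays fully supported over the limiting fibre — is where the real work sits, and it is exactly the place where distality (via Proposition \ref{shaosong}'s full-support section) and unique ergodicity are both genuinely used.
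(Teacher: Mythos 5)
Your proposal is correct and follows essentially the same route as the paper's own proof: identify $X_d$ with $Z_d$ via the hypothesis, use Proposition \ref{shaosong} (distal RIM with full-support section) together with minimality and the openness of the distal extension $\pi$ to show ${\rm Supp}(\lambda_d)=R_\pi=\RP^{[d]}$, and then conclude $\RP^{[d]}\subseteq\AP^{[d]}$ from Theorem \ref{fibre-ap}, handling the $\infty$-step case by Lemmas \ref{mes-top}--\ref{equality} and the inverse-limit proposition for $\AP^{[d]}$. The only cosmetic difference is that the paper dresses the argument as an induction on $d$, which is inessential, and your explicit density-and-continuity justification of ${\rm Supp}(\lambda_d)=R_\pi$ fills in precisely the step the paper compresses into ``by the distality of $\pi$.''
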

\begin{proof} We use induction. It is clear that $d=1$, $\AP^{[1]}=\RP^{[1]}$. Assume that
$\AP^{[d]}=\RP^{[d]}$ for $1\le d\le n$. Let $\mu$ be the unique ergodic measure on $(X,T)$.
Let $\pi: X\lra X_{n+1}=X/\RP^{[n+1]} $ be the factor map and $\nu=\pi(\mu)$. By the assumption, 
$\pi$ can be viewed as the factor map from $X$ to $Z_{n+1}$.

Let $\mu=\int_{X_{n+1}} \mu_z d\nu(z)$ be the disintegration of $\mu$ over $\nu$ and
$$\lambda=\int_{{X}_{n+1}} \mu_z\times \mu_z d\nu(z).$$
By Theorem \ref{fibre-ap}, ${\rm Supp}(\lambda)\subset \AP^{[n+1]}.$

We are going to show that ${\rm Supp}(\lambda)=R_\pi$. First we note that $\lambda(R_\pi)=1$, so
${\rm Supp}(\lambda)\subset R_\pi$. By Proposition \ref{shaosong} there is a measurable set
$Y_0\subset X_{n+1}$ with full measure such that for any $y\in Y_0$, ${\rm Supp}(\mu_y)=\pi^{-1}(y)$. Let $W= {\rm Supp}(\lambda)$. Since
$$\lambda(W)=\int_{Y_0}\mu_y\times \mu_y(W) d\nu(y)=1,$$
we have that for a.e. $y\in Y$, $\mu_y\times \mu_y(W)=1$. This implies that
${\rm Supp}(\mu_y)\times {\rm Supp}(\mu_y)\subset W$, a.e. $y\in Y$.

Thus by the distality of $\pi$, we have ${\rm Supp}(\lambda)=R_\pi$. Thus, 
$$R_\pi ={\rm Supp}(\lambda)\subset \AP^{[n+1]}.$$ Since $\AP^{[n+1]}\subset \RP^{[n+1]}$,
we conclude that $\AP^{[n+1]}= \RP^{[n+1]}$. This ends the proof of the first statement of the theorem.

\medskip

When $(X,T)$ is a minimal $\infty$-step pro-nilsystem, $(X,T)$ is uniquely ergodic, see \cite{D-Y}.
The result follows from what we just proved, Lemmas \ref{mes-top}-\ref{equality}
and an inverse limit argument.
\end{proof}






\section{An example}

In general it is not difficult to find a system whose maximal measurable
and topological factors of order $d$ do not coincide, where $d\le n$.
In fact Lehrer \cite{Lehrer} showed the following result: every ergodic system has a uniquely ergodic and topologically mixing model. Pick any non-periodic ergodic system  with discrete spectrum, and by  Lehrer's result let $(X,T)$ be its uniquely ergodic and topologically mixing model. Since $(X,T)$ is topologically mixing, its maximal equicontinuous factor $Z_1$ is trivial.

By Lemma \ref{mes-top}, for a minimal system of order $n$; the maximal measurable
and topological factors of order $d$ coincide, where $d\le n$. It is natural to ask that for a distal minimal system, if the maximal measurable
and topological factors of order $d$ coincide?

In this section we will construct a strictly ergodic distal system such that $Z_1$ is not
isomorphic to $X_1$. That is, we want to give

\begin{exam}
There is a uniquely ergodic minimal distal system $(X,T)$ with discrete spectrum whose maximal equicontinuous factor
is not equal to $(X,T)$.
\end{exam}

\begin{proof} Let us state the general idea. Let $T_\alpha:\T\lra \T$, $x\mapsto x+\alpha, x\in \T$,
and $m_\T$ be the unique measure of the irrational rotation $T_\alpha$
on $\T$. Then $m_\T$ is the Lebesgue measure on $\T$. We construct $T:\T^2\lra \T^2$ having the form of
$T(x,y)=(x+\alpha,y+u(x))$ such that $(\T^2,T)$ is minimal distal and uniquely ergodic
with the unique measure $\mu=m_{\T^2}=m_\T\times m_\T$, where 
$u:\T\lra \T$ is continuous. At the same time $(\T, T_\alpha)$
is the maximal equicontinuous factor of $(\T^2,T)$.

\medskip
\noindent {\bf Step 1}: The construction of $u$.

\medskip
Let us construct $u$ using some results of \cite{F61}. Choose an irrational $\alpha$ and a subsequence $\{n_k\}$ of integers
with $n_k\not=0, n_{-k}=-n_k$ such that
$$h(\theta)=\sum_{k\not=0}\frac{1}{|k|}(e^{2\pi in_k\alpha}-1)e^{2\pi in_k\theta}$$
and $g(e^{2\pi i\theta})=e^{2\pi i\lambda h(\theta)},$ (where $\lambda\in\R$ will be determined later) are $C^\infty$-functions
of $[0,1)$ and $\T$ respectively. It is clear that
$$h(\theta)=H(\theta+\alpha)-H(\theta),\ \text{where} \ H(\theta)=\sum_{k\not=0}\frac{1}{|k|}e^{2\pi in_k\theta}.$$
Thus, $H(\cdot)\in L^2(0,1)$ is a measurable function. However, $H(\cdot)$ can not correspond to a continuous function since
$\sum_{k\not=0}\frac{1}{|k|}=\infty$ and here the series is not Cesero summable at $\theta=0$, see \cite{Zy}. Therefore, for
some $\lambda$, $e^{2\pi i\lambda H(\theta)}$ can not be a continuous function either.

Considering $R(e^{2\pi i\theta})=e^{2\pi i\lambda H(\theta)}$, we get $R(e^{2\pi i\alpha}s)/R(s)=g(s)$ with $R:\T\lra \T$ measurable
but not continuous.

Put $u(x)=\lambda h(x)+\beta$, where $\alpha,\beta$ are irrational such that $T_{\alpha,\beta}:\T^2\lra \T^2$,
$(x,y)\mapsto (x+\alpha,y+\beta)$ is minimal on $\T^2$ and thus uniquely ergodic.

\medskip
\noindent {\bf Step 2}: The system $(\T^2,T)$ with $T(x,y)=(x+\alpha,y+u(x))$ is strictly ergodic, and $(\T^2,T,\mu)$ is isomorphic to $(\T^2,T_{\alpha,\beta},m_{\T^2})$.

\medskip

It is clear that $m_{\T^2}$ is an invariant measure for $T$.
Define $\phi:\T^2\lra \T^2$, $(x,y)\mapsto (x, y-\lambda H(x))$. It is clear that $\phi$ is measurable and
$m_{\T^2}$ is an invariant measure for $\phi$. Moreover we have the following commuting diagram:
$$
\xymatrix{
  (\T^2,m_{\T^2}) \ar[d]_{\phi} \ar[r]^{T}
                & (\T^2,m_{\T^2}) \ar[d]^{\phi}  \\
  (\T^2,m_{\T^2}) \ar[r]_{T_{\alpha,\beta} }
                & (\T^2,m_{\T^2})             }
$$
since $\phi\circ T(x,y)=(x+\alpha, y+h(x)+\beta-\lambda H(x))=T_{\alpha,\beta}\circ \phi$.
By the fact that $\phi$ is a measurable isomorphism it follows that
$(\T^2, m_{\T^2},T)$ is ergodic (as $(\T^2,m_{\T^2}, T_{\alpha,\beta})$ is ergodic). This implies that
$(\T^2, m_{\T^2},T)$ is uniquely ergodic, \cite[Proposition 3.10]{F}. Since $Supp(m_{\T^2})=\T^2$, it follows that $(\T^2,T)$ is minimal.


\medskip
\noindent {\bf Step 3}: $(\T, T_\alpha)$ is the maximal equicontinuous factor of $(\T^2,T)$.

\medskip
To show this fact we need some preparation.
Let $\pi:(\T^2,T)\lra (\T,T_\alpha)$ be the projection to the first coordinate and $\rho=\int_0^1 u(x)dx$.
We will show that $u$ is an unbounded motion, that is, there exists $x'\in\T$ such that
$$\sup_{n\ge 1}|u(x')+u(x'+\alpha)+\ldots+ u(x'+(n-1)\alpha)-n\rho|=+\infty.$$
This is equivalent to say that
\begin{lem}\label{motion-1}
There exists $x'\in \T$ such that
\begin{equation}\label{equ-51}
\sup_{n\ge 1}|\lambda h(x')+\lambda h(x'+\alpha)+\ldots+ \lambda h(x'+(n-1)\alpha)-n\rho^*|=+\infty,
\end{equation}
where $\rho^*=\int_0^1\lambda h(x)dx=0$.
\end{lem}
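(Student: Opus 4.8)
The plan is to recognize the sum in \eqref{equ-51} as a Birkhoff sum of the continuous real-valued function $\lambda h$ over the irrational rotation $(\T,T_\alpha)$, and to show it fails to be bounded by invoking the Gottschalk--Hedlund theorem. Writing $S_n f(x')=\sum_{j=0}^{n-1}f(x'+j\alpha)$, note first that $\rho^*=\lambda\int_0^1 h=0$, because $h=H\circ T_\alpha-H$ integrates to zero; hence the expression inside the absolute value in \eqref{equ-51} is exactly $S_n(\lambda h)(x')$. The coboundary relation $h(\theta)=H(\theta+\alpha)-H(\theta)$ moreover makes this sum telescope, giving
\begin{equation*}
S_n(\lambda h)(x') = \lambda\big(H(x'+n\alpha)-H(x')\big)
\end{equation*}
for a.e. $x'$, which already indicates that the sole obstruction to boundedness is the failure of $H$ to be continuous. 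Here $\lambda\neq 0$ by the choice made in the construction of $u$.

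The first main step is to show that $\lambda h$ is \emph{not} a continuous coboundary on $(\T,T_\alpha)$, i.e. there is no $g\in C(\T)$ with $\lambda h=g\circ T_\alpha-g$. I would argue by contradiction: if such a continuous $g$ existed, then comparing it with $\lambda h=(\lambda H)\circ T_\alpha-\lambda H$ shows that $g-\lambda H$ is $T_\alpha$-invariant almost everywhere. Since $\alpha$ is irrational, $T_\alpha$ is ergodic, so $g-\lambda H$ is constant a.e., whence $H$ would agree a.e. with the continuous function $\lambda^{-1}(g-c)$. This contradicts the fact recorded in the construction that $H$ does not correspond to a continuous function: its Fourier series is not Ces\`aro summable at $\theta=0$, whereas a function a.e. equal to a continuous one would, by Fej\'er's theorem, have an everywhere Ces\`aro-convergent Fourier series. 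Hence $\lambda h$ is not a continuous coboundary.

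The second step is to apply the Gottschalk--Hedlund theorem: for a continuous function $f$ on a minimal system, $f$ is a continuous coboundary if and only if the Birkhoff sums $\{S_n f(x_0)\}_{n\geq 1}$ are bounded at some (equivalently, every) point $x_0$. Since $(\T,T_\alpha)$ is minimal and $\lambda h$ is continuous but not a continuous coboundary, the theorem forces $\sup_{n\geq 1}|S_n(\lambda h)(x')|=+\infty$ for every $x'$, in particular for some $x'$, which is precisely \eqref{equ-51}.

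I expect the delicate point to be the rigorous justification that $H$ is not almost-everywhere equal to a continuous function; the telescoping identity and the ergodicity argument are routine, and Gottschalk--Hedlund may be used as a black box, but one must be sure that the divergence of $\sum_{k\neq 0}\frac{1}{|k|}$ together with the chosen lacunary-type frequencies $\{n_k\}$ genuinely rules out \emph{any} continuous representative of $H$, rather than merely showing that the specific series diverges at $\theta=0$. This is exactly the work done by the reference to Zygmund and by the careful selection of $\{n_k\}$.
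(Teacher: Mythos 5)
Your proof is correct, and it takes a genuinely different route from the paper's at the decisive step. Both arguments reduce the lemma to showing that $\lambda h$ is not the coboundary of a \emph{continuous} function over $(\T,T_\alpha)$, but they handle the two halves of this reduction differently. The paper does not invoke Gottschalk--Hedlund; it reproves that implication by hand: assuming the sums are bounded at every point, a Baire-category argument upgrades this to a uniform bound, and then the orbit closure of $(0,0)$ under the skew product $(x,y)\mapsto (x+\alpha,\,y+\lambda h(x))$ on $\T\times\R$ is compact, contains a minimal set, and that minimal set is shown to be the graph of a continuous $\tilde g$ with $\lambda h=\tilde g\circ T_\alpha-\tilde g$ --- which is exactly the standard proof of the theorem you cite, so using it as a black box is legitimate. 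The real divergence is in ruling out the continuous coboundary. The paper applies Furstenberg's theorem (Lemma \ref{ff-1}) to the circle extension $U(w_1,w_2)=(w_1e^{2\pi i\alpha},\,g(w_1)w_2)$: since the multiplicative cocycle equation has the measurable, not a.e.\ continuous solution $R=e^{2\pi i\lambda H}$, some Birkhoff average must fail to converge, whereas the continuous coboundary representation forces every such average to converge by unique ergodicity of the base rotation --- a contradiction. You rule it out more directly: the continuous and the measurable coboundary representations of $\lambda h$ differ by an a.e.\ $T_\alpha$-invariant function, which by ergodicity is a.e.\ constant, so $H$ would be a.e.\ equal to a continuous function, contradicting the Ces\`aro divergence of its Fourier series at $\theta=0$ via Fej\'er's theorem. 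Your route buys elementarity and economy: it bypasses Lemma \ref{ff-1} entirely and needs only $\lambda\neq 0$ together with the fact, already recorded in Step 1 of the construction (with the reference to Zygmund), that $H$ has no continuous representative --- so the point you flag as delicate is precisely what the construction supplies, and your Fej\'er argument is a valid justification of it. The paper's route, in exchange, requires the special choice of $\lambda$ for which $e^{2\pi i\lambda H}$ itself fails to be a.e.\ continuous, but it is self-contained in that it does not lean on Gottschalk--Hedlund as an external theorem.
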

The proof of Lemma \ref{motion-1} will be given at the end of the proof.
By Lemma \ref{motion-1} there exists $x'\in\T$ such that $\sup_{n\ge 1}|\sum_{j=0}^{n-1}u(x'+j\alpha)-n\rho|=+\infty.$
WLOG we assume that $\sup_{n\ge 1}\{\sum_{j=0}^{n-1}u(x'+j\alpha)-n\rho\}=+\infty.$

We need  another well known lemma, see for example \cite[Lemma 4.1]{Qiao}. Note that the degree of $u$ is zero.

\begin{lem}\label{qiao-1} There exist $x_1,x_2\in\T$ such that
$$\sup_{n\ge 1}\{u(x_1)+u(x_1+\alpha)+\ldots+ u(x_1+(n-1)\alpha)-n\rho\} \le 2$$ and
$$ \inf_{n\ge 1}\{u(x_2)+u(x_2+\alpha)+\ldots+ u(x_2+(n-1)\alpha)-n\rho\} \ge -2.$$
\end{lem}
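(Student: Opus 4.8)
The plan is to reduce Lemma~\ref{qiao-1} to a statement about Birkhoff sums of a continuous zero-mean cocycle over the rotation $(\T,T_\alpha)$, and then to locate the two points by a single compactness-and-minimization argument applied once to $u$ and once to $-u$. Since the degree of $u$ is zero, $u$ lifts to a continuous $1$-periodic function $\tilde u:\R\lra\R$; put $\psi=\tilde u-\rho$, so that $\psi$ is continuous, $1$-periodic and $\int_0^1\psi=0$, and write $\Psi_n(x)=\sum_{j=0}^{n-1}\psi(x+j\alpha)$ for the normalized Birkhoff sums (with $\Psi_0\equiv 0$). The assertion about $x_1$ then amounts to finding a point where the lower-semicontinuous function
$$\phi(x)=\sup_{n\ge 0}\Psi_n(x)\in[0,+\infty]$$
is small, and the assertion about $x_2$ is the same statement applied to $-\psi$.

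First I would record the functional equation $\phi(x)=\max\{0,\psi(x)+\phi(x+\alpha)\}$, valid in $[0,+\infty]$, which follows by splitting the supremum according to whether $n=0$ or $n\ge 1$. The crux is to show $\phi\not\equiv+\infty$. Suppose it were. Writing $g_n(x)=\max_{0\le k\le n}\Psi_k(x)$, these are continuous, increase pointwise to $\phi$, and hence, by a Dini-type compactness argument on $\T$, satisfy $\min_x g_n(x)\to+\infty$. Choosing $N_0$ with $\min_x g_{N_0}>\|\psi\|_\infty$, the recursion $g_n(x)=\max\{0,\psi(x)+g_{n-1}(x+\alpha)\}$ then forces $g_n(x)=\psi(x)+g_{n-1}(x+\alpha)$ for every $x$ and every $n>N_0$; integrating against the $T_\alpha$-invariant Lebesgue measure and using $\int\psi=0$ gives $\int g_n=\int g_{N_0}$ for all $n\ge N_0$, which is incompatible with $\int g_n\ge\min_x g_n\to+\infty$. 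Thus $\phi\not\equiv+\infty$; consequently $\min_x g_n$ stays bounded, so $c:=\inf_x\phi(x)<+\infty$, and a standard diagonal argument along the minimizers of the $g_n$ shows that this infimum is attained at some $x_1$.

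Next I would pin down the value of $c$. Clearly $c\ge 0$, since $\Psi_0\equiv 0$. Suppose $c>0$. From the functional equation, whenever $\phi(x)>0$ the maximum is realized by the second term, so $\phi(x+\alpha)=\phi(x)-\psi(x)$; iterating this relation from $x_1$, and using that $\phi\ge c>0$ everywhere, yields $\phi(x_1+n\alpha)=c-\Psi_n(x_1)$ for all $n\ge 0$. Combined with $\phi\ge c$ everywhere this gives $\Psi_n(x_1)\le 0$ for every $n$, whence $\phi(x_1)=\sup_{n}\Psi_n(x_1)\le 0<c$, a contradiction. Hence $c=0$, and the minimizer $x_1$ satisfies $\Psi_n(x_1)\le 0$ for all $n\ge 0$, so in particular $\sup_{n\ge 1}\Psi_n(x_1)\le 0\le 2$. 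Applying the identical argument to the cocycle $-\psi$ produces $x_2$ with $\Psi_n(x_2)\ge 0$ for all $n$, hence $\inf_{n\ge 1}\Psi_n(x_2)\ge 0\ge -2$; translating back through $\psi=\tilde u-\rho$ gives exactly the two displayed inequalities (the constant $2$ being comfortably loose, the argument in fact giving $0$).

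I expect the main obstacle to be the finiteness step, that is, excluding $\phi\equiv+\infty$: this is where the zero-mean hypothesis and the unique ergodicity of the rotation genuinely enter, through the integration identity that contradicts $\min_x g_n\to+\infty$. Once finiteness is secured, the evaluation $c=0$ is a short self-improving argument using only the functional equation, and the passage from $\psi$ back to $u$ and from one point to the other is routine.
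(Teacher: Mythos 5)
Your proof is correct, but it cannot be compared step-by-step with the paper's, because the paper offers no proof of this statement at all: Lemma \ref{qiao-1} is quoted as ``well known'' with a pointer to \cite[Lemma 4.1]{Qiao} (only Lemma \ref{motion-1} is actually proved in the text, by a quite different skew-product/minimal-set argument). What you have supplied is a self-contained replacement for that citation, and every step checks out: the recursion $g_n(x)=\max\{0,\psi(x)+g_{n-1}(x+\alpha)\}$ is the right one for $g_n=\max_{0\le k\le n}\Psi_k$; if $\phi=\sup_{n\ge 0}\Psi_n$ were identically $+\infty$, compactness does force $\min_x g_n\to+\infty$, and once $\min_x g_{N_0}>\|\psi\|_\infty$ the recursion becomes exact, so integrating against Lebesgue measure and using $\int_0^1\psi=0$ yields the contradiction; lower semicontinuity of $\phi$ on the compact circle makes $c=\inf\phi$ attained; and the calibration step (iterating $\phi(x+\alpha)=\phi(x)-\psi(x)$ along the orbit of the minimizer, legitimate since $\phi\ge c>0$ everywhere and $\phi(x_1)<\infty$) correctly rules out $c>0$. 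Two comparative remarks. First, your argument is both sharper and more general than the cited statement: it produces $x_1,x_2$ with $\sup_{n\ge1}\Psi_n(x_1)\le 0$ and $\inf_{n\ge1}\Psi_n(x_2)\ge 0$, so the constant $2$ is superfluous, and nothing in the proof uses the rotation structure---only that $\psi$ is continuous with zero mean with respect to \emph{some} invariant measure of the homeomorphism---so it proves the result for arbitrary compact systems carrying such a measure. (This is essentially the Ma\~n\'e--Conze--Guivarc'h sub-action lemma from ergodic optimization, with $\phi$ playing the role of a calibrated sub-action; it is also a relative of the Gottschalk--Hedlund mechanism.) Second, a small slip in your closing commentary: unique ergodicity of $(\T,T_\alpha)$ is never actually used; the integration identity needs only the $T_\alpha$-invariance of Lebesgue measure together with $\int_0^1\psi=0$, whereas unique ergodicity would instead give the uniform convergence $\Psi_n/n\to 0$, which your argument does not require.
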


Now we are ready to show that $(\T,T_\alpha)$ is the maximal equicontinuous factor of $(\T^2,T)$.
Since $\RP^{[1]}(\T^2,T)$ is $T\times T$-invariant and closed it remains to prove that
$$\RP^{[1]}(\T^2,T)\supset \{(x_1,y_1),(x_1,y_2):y_1,y_2\in\T\}.$$

To do this consider
$$\T_{\infty,+}=\{x\in \T: \sup_{n\ge 1}\{u(x)+u(x+\alpha)+\ldots+ u(x+(n-1)\alpha)-n\rho\}=+\infty\}.$$
It is $G_\delta$ and $T_\alpha$-invariant. As $x'\in \T_{\infty,+}$ we know that $x'+i\alpha\in \T_{\infty,+}, i\in\N$.

Fix $y_1,y_2\in\T$ for $\ep>0$ let
$$U_1=(x_1-\ep,x_1+\ep)\times (y_1-\ep,y_1+\ep),\ \text{and}\ U_2=(x_1-\ep,x_1+\ep)\times (y_2-\ep,y_2+\ep).$$

Choose $i_*\in\N$ such  that $x_1^*=x'+i_*\alpha(\text{mod}\ \Z)\in (x_1-\ep,x_1+\ep).$ Since $x_1^*\in \T_{\infty,+}$,
there exists $m\in\N$ such that
$$ u(x_1^*)+u(x_1^*+\alpha)+\ldots+ u(x_1^*+(m-1)\alpha)-m\rho\ge 3.$$

Now consider $Q: (x_1-\ep,x_1+\ep)\lra \R$, $x\mapsto \sum_{j=0}^{m-1}u(x+j\alpha)-m\rho+y_1$.
Then by Lemma \ref{qiao-1}
\begin{eqnarray*}
Q(x_1-\ep,x_1+\ep)&\supset& [y_1+ \sum_{j=0}^{m-1}u(x_1+j\alpha)-m\rho, y_1+  \sum_{j=0}^{m-1}u(x_1^*+j\alpha)-m\rho]\\
&\supset& [y_1+2,y_1+3].
\end{eqnarray*}

Thus, there is $x^*\in (x_1-\ep,x_1+\ep)$ such that $Q(x^*)=y_2+\sum_{j=0}^{m-1}u(x_1+j\alpha)-m\rho$ $(\text{mod}\ \Z)$.
Now we have $(x^*,y_1)\in U_1$ and $(x_1,y_2)\in U_2$. Moreover,
\begin{eqnarray*}
T^m(x^*,y_1)&=&\left(x^*+m\alpha, y_1+\sum_{j=0}^{m-1}u(x^*+j\alpha)\right)=(x^*+m\alpha,y_1+Q(x^*)+m\rho)\\
&=&\left(x^*+m\alpha,y_2+\sum_{j=0}^{m-1}u(x_1+j\alpha)\right)\ \ \text{and}\\
T^m(x_1,y_2)&=&\left(x_1+m\alpha,y_2+\sum_{j=0}^{m-1}u(x_1+j\alpha)\right).
\end{eqnarray*}

This implies that $||T^m(x^*,y_1)-T^m(x_1,y_2)||\le ||x^*-x_1||<\ep.$ That is, we have proved that
$((x_1,y_1),(x_1,y_2))\in \RP^{[1]}(\T^2,T)$. It follows that $\RP^{[1]}(\T^2,T)=\{((x,y_1),(x,y_2)):x, y_1,y_2\in\T\}=R_\pi$,
since $\pi$ is distal.
\end{proof}

\medskip
To show Lemma \ref{motion-1}  we need
\begin{lem} \cite[Theorem 3.1]{F61} \label{ff-1} Let $(\Omega_0,T_0)$ be a strictly ergodic system and $\mu_0$ its unique ergodic measure.
Let $\Omega=\Omega_0\times \T$ and let $T:\Omega\lra \Omega$ be defined by $T(w_0,s)=(T_0(w_0),g(w_0)s)$, where $g:\Omega_0\lra \T$
is a continuous function. Then if the equation
$$g^k(w_0)=R(T_0(w_0))/R(w_0)$$
has a solution $R:\Omega_0\lra \T$ which is measurable but not equal almost everywhere to a continuous function, then
$\lim_{N\lra\infty}\frac{1}{N}\sum_{n=0}^{N-1}f\circ T^n(w)$ can not exist
for all continuous functions $f$ and all $w\in \Omega$.
\end{lem}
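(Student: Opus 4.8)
The plan is to argue by contradiction: assume that $\frac1N\sum_{n=0}^{N-1}f(T^nw)$ converges for \emph{every} continuous $f$ on $\Omega=\Omega_0\times\T$ and \emph{every} $w=(w_0,s)$, and deduce that $R$ must then agree $\mu_0$-a.e.\ with a continuous function, contradicting the hypothesis. By Stone--Weierstrass the functions $f(w_0,s)=\psi(w_0)s^k$ with $\psi\in C(\Omega_0)$ and $k\in\Z$ span a dense subspace of $C(\Omega)$, so it suffices to treat these. Writing the cocycle $g^{(n)}(w_0)=\prod_{j=0}^{n-1}g(T_0^jw_0)$ one has $T^n(w_0,s)=(T_0^nw_0,\,g^{(n)}(w_0)\,s)$, hence
\[
\frac1N\sum_{n=0}^{N-1}f(T^n(w_0,s))=s^k\,\frac1N\sum_{n=0}^{N-1}\psi(T_0^nw_0)\big(g^{(n)}(w_0)\big)^k=:s^kC_N(w_0),
\]
where each $C_N$ is \emph{continuous} on $\Omega_0$. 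Taking $s=1$ (and first $\psi\equiv1$), the assumption forces $C_N(w_0)$ to converge for every $w_0$, to some everywhere-defined, Baire class one limit $\gamma$.

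Next I would feed in the coboundary relation. Telescoping $g^k=R\circ T_0/R$ gives $\big(g^{(n)}(w_0)\big)^k=R(T_0^nw_0)/R(w_0)$ for a.e.\ $w_0$, so $R(w_0)C_N(w_0)=\frac1N\sum_{n=0}^{N-1}R(T_0^nw_0)$ a.e. Since $(\Omega_0,T_0)$ is strictly ergodic, Birkhoff's pointwise theorem identifies the right-hand side with $c:=\int R\,d\mu_0$ for a.e.\ $w_0$; hence $\gamma=c/R$ a.e. Using $g^{(n)}(T_0w_0)=g^{(n+1)}(w_0)/g(w_0)$ one checks that the everywhere-defined limit obeys $\gamma\circ T_0=g^{-k}\gamma$, so $|\gamma|$ is $T_0$-invariant. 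As $|\gamma|$ is of Baire class one on the \emph{minimal} system $(\Omega_0,T_0)$, evaluating it at a continuity point (these are dense for Baire-one functions) and transporting that value along a dense orbit via invariance forces $|\gamma|$ to be the constant $|c|$. (If $c=0$ I would first replace $\psi\equiv1$ by a continuous $\psi$ with $\int\psi R\,d\mu_0\neq0$, which exists because $R$ is unimodular and continuous functions are $L^2$-dense, and rerun the argument.)

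It then remains to turn this regularity into continuity of $R$. With $|\gamma|\equiv|c|>0$ the function $S:=c/\gamma$ is everywhere defined, unimodular, satisfies $S\circ T_0=g^kS$, and equals $R$ a.e.; the target is to show $S$ is \emph{continuous}, for then $R$ agrees a.e.\ with a continuous solution of $g^k=S\circ T_0/S$, and since any two solutions differ by a $T_0$-invariant (hence, by ergodicity, constant) factor, $R$ would be a.e.\ continuous — the desired contradiction. I expect \textbf{this last upgrade to be the main obstacle}: $\gamma$ is produced only as a pointwise-everywhere limit of the continuous averages $C_N$, so a priori it is merely Baire class one, whereas genuine continuity is needed. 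The route I would pursue is a Gottschalk--Hedlund type dichotomy, relating everywhere convergence of the cocycle averages $C_N$ to boundedness of the Birkhoff sums of the continuous additive lift of $g^k$, the bounded case yielding a \emph{continuous} transfer function and the unbounded case being exactly what the non-continuity of $R$ enforces. Conceptually the whole proof reduces to a single clean statement: the measurable solution $R$ makes $s^k/R(w_0)$ a nonconstant $T$-invariant function, so $\mu_0\times\mathrm{Haar}$ is not ergodic and $(\Omega,T)$ is not uniquely ergodic; the contradiction hypothesis of everywhere convergence, together with minimality of the base (which propagates the value of an invariant Baire-one limit from a continuity point), is incompatible with this, and the delicate point is precisely the passage from ``not uniquely ergodic'' to ``the Birkhoff limit fails at an honest point $w$.''
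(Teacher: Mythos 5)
The paper itself offers no proof of this lemma --- it is quoted directly from Furstenberg \cite[Theorem 3.1]{F61} --- so the comparison can only be with a correct completion of the argument. Your proposal is sound and efficient exactly as far as you yourself flag: the reduction to $f(w_0,s)=\psi(w_0)s^k$, the identity $\frac1N\sum_{n<N}f(T^n(w_0,s))=s^kC_N(w_0)$, the Birkhoff/coboundary identification $\gamma=c/R$ a.e.\ with $c=\int\psi R\,d\mu_0\neq0$ for suitable $\psi$, the relation $\gamma\circ T_0=g^{-k}\gamma$, and the constancy of $|\gamma|$ (an invariant Baire-one function on the minimal base) are all correct. But the proof is genuinely incomplete at the decisive step, and the route you propose for it would fail. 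Upgrading the everywhere-defined, unimodular, Baire-one solution $S=c/\gamma$ of $S\circ T_0=g^kS$ to a \emph{continuous} one cannot be done by a Gottschalk--Hedlund dichotomy: that theorem concerns real-valued coboundaries and needs bounded Birkhoff sums of a continuous additive lift of $g^k$; here $\Omega_0$ is an arbitrary strictly ergodic space and $g$ is circle-valued, so no global continuous lift need exist, and nothing in the hypotheses bounds such sums. Indeed the whole point of the lemma (and of the example the paper builds with it) is that a circle-valued cocycle can be a measurable coboundary without being a continuous one, so no dichotomy of that shape can close the gap. Your closing diagnosis is also right that ``not uniquely ergodic'' alone does not yield divergence at an actual point (the identity map shows this), and $(\Omega,T)$ need not be minimal, so that route is blocked too.

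The standard way to finish from exactly where you stand never proves continuity of $S$; it exploits minimal subsets of the skew product. The function $F(w_0,s)=\gamma(w_0)s^k$ is a pointwise limit of the continuous functions $C_N(w_0)s^k$, hence Baire one on $\Omega$, and it is $T$-invariant. Restrict it to a minimal subset $M\subset\Omega$: an invariant Baire-one function on a minimal system is constant, so $\gamma(w_0)s^k=c_M$ on $M$. Since $|\gamma|\equiv|c|\neq0$ and $k\neq0$ (note $k=0$ is excluded, as then $R$ would be invariant, hence a.e.\ constant), the fibers of $M$ are finite, so $M\neq\Omega$. The fiber rotations $\tau_t(w_0,s)=(w_0,ts)$ commute with $T$, so two points of $M$ in one fiber differ by an element of $H=\{t:\tau_tM=M\}$, which is then a finite group $\Z_q$; each fiber of $M$ is a single $\Z_q$-coset, giving $M=\{(w_0,s):s^q=\theta(w_0)\}$ with $\theta$ continuous and, by invariance of $M$, $g^q=\theta\circ T_0/\theta$. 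Feeding the coset structure into $s^k=c_M/\gamma(w_0)$ forces $\zeta^k=1$ for every $q$-th root of unity $\zeta$, i.e.\ $q\mid k$; then $\theta^{k/q}$ is a continuous solution of the equation for $g^k$, so $R/\theta^{k/q}$ is $T_0$-invariant, hence $\mu_0$-a.e.\ constant, and $R$ is a.e.\ continuous --- the desired contradiction. So your steps form the right skeleton, but the argument needs this minimal-subset/stabilizer mechanism (or an equivalent), which is the missing idea rather than a technicality.
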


\noindent {\bf Proof of Lemma \ref{motion-1}}: Let
\begin{equation}\label{equ-52}
\T_\infty=\{x\in\T: \sup_{n\ge 1}|\lambda h(x)+\lambda h(x+\alpha)+\ldots+ \lambda h(x+(n-1)\alpha)-n\rho^*|=+\infty\}.
\end{equation}

It is clear that $\T_\infty$ is a $G_\delta$ and $T_\alpha$-invariant subset, and thus if it is not empty then it is a dense $G_\delta$ subset of $\T$.

Assume the contrary that $\T_\infty=\emptyset$. We {\bf claim} that there exists $M\in\N$ such that
$$|\lambda h(x)+\lambda h(x+\alpha)+\ldots+ \lambda h(x+(n-1)\alpha)-n\rho^*|\le M$$
for any $n\ge 1$ and $x\in\T$. If the claim does not hold, then there exist $x_k\in \T$ and $n_k\lra +\infty$ such that
\begin{equation}\label{equ-53}
|\lambda h(x_k)+\lambda h(x_k+\alpha)+\ldots+ \lambda h(x_k+(n_k-1)\alpha)-n_k\rho^*|>k.
\end{equation}

Consider $$\T_l=\{x\in\T:\exists n\in\N\ \text{s.t.}\ |\lambda h(x)+\lambda h(x+\alpha)+\ldots+ \lambda h(x+(n-1)\alpha)-n\rho^*|>l\},$$
$l\in\N$. It is clear that $\T_l$ is open and $\T_\infty=\cap_{l\in\N}\T_l$.
Now we show that $\T_l$ is a dense open subset for any $l\in\N$.

Fix $l\in\N$. For any non-empty open subset $V$ of $\T$, there exists $r=r(V)\in\N$
such that $\cup_{i=0}^r T_\alpha^{-i}V=\T$. Choose $k>l+r|\rho^*|+r\max_{x\in\T}|\lambda h(x)|$. By (\ref{equ-53}) we have
$$|\lambda h(x_k+i\alpha)+ \lambda h(x_k+i\alpha+\alpha)+\ldots+\lambda h(x_k+i\alpha+(n_k-i-1)\alpha)-(n_k-i)\rho^*|>l$$
for $i=0,1,\ldots,r.$ That is, $x_k+i\alpha\in\T_l$. Since $\cup_{i=0}^r T_\alpha^{-i}V=\T$,
there exists $0\le i\le r$ with $x_k +i\alpha\in V\cap \T$. This implies that $\T_l$ is dense, and hence
$\T_\infty$ is dense, a contradiction. This proves the {\bf claim}.

\medskip
Consider now $S:\T\times \R\lra \T\times \R$, $(x,y)\mapsto (x+\alpha,y+\lambda h(x)-\rho^*)$. Since
$$S^n(x,y)=(x+n\alpha, y+\lambda h(x)+\ldots+\lambda h(x+(n-1)\alpha)-n\rho^*)$$
for any $n\ge 0$, we have $\{S^n(0,0):n\ge 0\}\in \T\times [-M,M]$. Thus,
$E=\overline{\{S^n(0,0):n\ge 0\}} \subset \T\times [-M,M]$
is an $S$-invariant compact subset. This deduces that there is a minimal subset $F\subset E$.

As $S$ is distal, $(F,S)$ is a minimal distal system and $p:\T\times \R\lra \T$, $(x,y)\mapsto x$ is a factor map
from $(F,S)$ to $(\T,T_\alpha)$.

Let $I(x)=\{y\in \R:(x,y)\in F\}$ for any $x\in\T$. Fix $x\in \T$ we {\bf claim} that $|I(x)|=1$. In fact let $y_1^*=\max I(x)$ and $y_2^*=\min I(x)$,
then $y_2^*\le y_1^*$. As $(F,S)$ is minimal, there are $\{n_k\}$ such that $S^{n_k}(x,y_2^*)\lra (x,y_1^*)$.
This implies
$$y_2^*+\lambda h(x)+\ldots+\lambda h(x+(n_k-1)\alpha)-n_k\rho^*\lra y_1^*$$ and
we assume that
$$y_1^*+\lambda h(x)+\ldots+\lambda h(x+(n_k-1)\alpha)-n_k\rho^*\lra y_3^*\in I(x).$$
Thus, $y_1^*\le y_3^*$ and hence $y_1^*= y_3^*$. This implies $y_1^*= y_2^*$, i.e. $|I(x)|=1$. This ends the proof of the {\bf claim}.

\medskip
By what we just proved we know that there exists $\tilde{g}:\T\lra \R$ continuous such that $\{(x,g(x)):x\in \T\}=F$.
Note that the continuity of $g$ follows from the fact that the projection $p:E\lra \T$ is one to one.

Since $SF=F$ we get that $$(x+\alpha,\tilde{g}(x+\alpha))=(x+\alpha,\tilde{g}(x)+\lambda h(x)-\rho^*)$$ for any $x\in \T$.
As $\rho^*=0$ we know that
$$\lambda h(x)=\tilde{g}(x+\alpha)-\tilde{g}(x),\ \  \forall x\in\T.$$

Now consider $U:\T^2\lra \T^2$, $(w_1,w_2)\mapsto (w_1e^{2\pi i \alpha}, g(w_1)w_2)$, where
$g(e^{2\pi i\theta})=e^{2\pi i\lambda h(\theta)}.$ Since $R(e^{2\pi i\alpha}s)/R(s)=g(s)$ with $R:\T\lra \T$ measurable
but not continuous. By Lemma  \ref{ff-1} there exists $f\in C(\T^2)$ and $(w_1,w_2)\in\T^2$ such that
$$\lim_{N\lra\infty}\frac{1}{N}\sum_{n=0}^{N-1}f\circ U^n(w_1,w_2)\ \text{does not exist}.$$

At the other hand since $\lambda h(x)=\tilde{g}(x+\alpha)-\tilde{g}(x), \forall x\in\T$, by writing $w_1=e^{2\pi i x_1}$ and $w_2=e^{2\pi i y_1}$ we have
\begin{eqnarray*}
\frac{1}{N}\sum_{n=0}^{N-1}f\circ U^n(w_1,w_2)&=&\frac{1}{N}\sum_{n=0}^{N-1}f(e^{2\pi i (x_1+n\alpha)},e^{2\pi i (y_1+\tilde{g}(x_1+n\alpha)-\tilde{g}(x_1))})\\
&=&\frac{1}{N}\sum_{n=0}^{N-1}\tilde{H}(n\alpha)=\frac{1}{N}\sum_{n=0}^{N-1}\tilde{H} (T_\alpha^n(0))\lra \int_0^1 \tilde{H}(t)dt,
\end{eqnarray*}
by the unique ergodicity of $(\T,T_\alpha)$, where $\tilde{H}(t)=f(e^{2\pi i (x_1+t)},e^{2\pi i (y_1+\tilde{g}(x_1+t)-\tilde{g}(x_1))})$
is a periodic continuous function of period $1$ for $t$.
It is a contradiction. Thus, we have proved that $\T_\infty\not=\emptyset$, and this ends the proof.  \hfill $\square$



\begin{thebibliography}{SSS}




\bibitem{Au88} J. Auslander, \textit{Minimal flows and their
extensions}, North-Holland Mathematics Studies {\bf 153} (1988),
North-Holland, Amsterdam.

\bibitem{BL} V. Bergelson and A. Leibman, \textit{Polynomial extensions of van der Waerden's and Szemeredi's theorems}, Journal of AMS, {\bf 9} (1996), no.3, 725-753



\bibitem{B1} F. Blanchard, \textit{Fully positive topological
entropy and topological mixing}, Symbolic dynamics and its applications, AMS
Contemporary Mathematics, \textbf{135}(1992), 95-105.

\bibitem{B2} F. Blanchard, \textit{A disjointness theorem
involving topological entropy}, Bull. de la Soc. Math. de France,
\textbf{121}(1993), 465-478.


\bibitem{BHR} F. Blanchard, B. Host and S. Ruette, \textit{Asymptotic
pairs in positive-entropy systems}, Ergod. Th. and Dynam. Sys., {\bf
22} (2002), No. 3, 671--686.

\bibitem{Br} I.U. Bronstein, \textit{Extensions of minimal transformation
groups}, Martinus Nijhoff Publications, The Hague, 1979.

\bibitem{CS}  F. Cai and S. Shao, \textit{Topological characteristic factors along cubes of minimal systems},  Discrete Contin. Dyn. Syst. 39 (2019), no. 9, 5301--5317.


\bibitem{D-Y} P. Dong, S. Donoso, A. Maass, S. Shao, X. Ye, \textit{Infinite-step nilsystems, independence andcomplexity},  Ergod. Th. and Dynam. Sys., {\bf 33}(2013), 118--143.

\bibitem{Ellis} R. Ellis, \textit{Lectures on topological dynamics}, W. A.
Benjamin, Inc., New York 1969

\bibitem{EG} R. Ellis and W. Gottschalk, \textit{ Homomorphisms of transformation
groups}, Trans. Amer. Math. Soc., {\bf 94} (1960), 258-271.

\bibitem{EGS} R. Ellis, S. Glasner and L. Shapiro, \textit{Proximal-Isometric Flows}, Adv. Math. {\bfseries 17}, (1975), 213--260.

\bibitem{EK} R. Ellis and H. Keynes, \textit{A characterization of the equicontinuous
structure relation}, Trans. Amer. Math. Soc., {\bf 161} (1971),
171--181.

\bibitem{F61} H. Furstenberg, \textit{Strict ergodicity and transformation of the torus},
Amer. J. Math., {\bf 83} (1961), 573--601.

\bibitem{F63} H. Furstenberg,\textit{The structure of distal flows}, Amer. J.
Math. 85, 1963, 477--515.

\bibitem{F77} H. Furstenberg, \textit{Ergodic behavior of diagonal measures
and  a theorem of Szem\'{e}redi on arithmetric progressions}, J. Analyse Math. {\bfseries 31}, (1977), 204--256.

\bibitem{F} H. Furstenberg, \textit{Recurrence in ergodic theory and
combinatorial number theory}, M. B. Porter Lectures. Princeton
University Press, Princeton, N.J., 1981.

\bibitem{G75} S. Glasner, \textit{Relatively invariant measures}, Pacific J. Math. 58 (1975), no.2, 393--410.

\bibitem{G93} E. Glasner, \textit{Minimal nil-transformations of class two}, Israel
J. Math., {\bf 81}(1993), 31--51.

\bibitem{G94} E. Glasner, \textit{Topological ergodic decompositions and
applications to products of powers of a minimal transformation}, J.
Anal. Math., {\bf 64} (1994), 241-262.

\bibitem{GGY-16} E. Glasner, Y. Gutman and X. Ye, \textit{Higher order regionally proximal equivalence
relations for general minimal group actions}, Adv. Math. {\bf 333} (2018), 1004-1041.


\bibitem{GW} E. Glasner and B. Weiss, \textit{Quasi-factors of zero-entropy systems},
J. Amer. Math. Soc., {\bfseries 8}, (1995), 665--686.

\bibitem{HK05} B. Host and B. Kra, \textit{Nonconventional averages and
nilmanifolds}, Ann. of Math., {\bfseries 161} (2005) 398--488.

\bibitem{HK18} B. Host and B. Kra, \textit{Nilpotent Structures in Ergodic Theory},
Mathematical Surveys and Monographs {\bf 236}, AMS, 2018.

\bibitem{HKM} B. Host, B. Kra and A. Maass, \textit{Nilsequences and a structure
theory for topological dynamical systems}, Adv. Math.,
{\bf 224} (2010) 103--129.

\bibitem{HKM-1} B. Host, B. Kra and A. Maass, \textit{Variations on topological recurrence},
Monatsh. Math. {\bf 179} (2016),  no. 1, 57-89.


\bibitem{HLY} W. Huang, H. Li and X. Ye, \textit{Family-independence for topological and measurable
dynamics}, Trans. Amer. Math. Soc., {\bf 364}(2012), 5209-5242.

\bibitem{HLY1} W. Huang, H. Li and X. Ye, \textit{Localization and dynamical Ramsey
property}, Preprint.

\bibitem{HSY1} W. Huang, S. Shao and X. Ye, \textit{Nil Bohr-sets and almost automorphy of higher
order}, Mem. Amer. Math. Soc. 241 (2016), no. 1143, v+83 pp.

\bibitem{HSY-16} W. Huang, S. Shao and X. Ye, \textit{ Topological correspondence of multiple
ergodic averages of nilpotent group actions}, . J. Anal. Math. 138 (2019), no. 2, 687--715.

\bibitem {HY02} W. Huang  and X. Ye, \textit{An explicit scattering, non-weakly mixing example and weak disjointness}, Nonlinearity, {\bf 15}(2002), 1--14.

\bibitem{HY06} W. Huang and X. Ye, \textit{A local variational relation and applications}, Israel J. Math., {\bfseries 151}, (2006), 237--280.



\bibitem{KL} D. Kerr and H. Li, \textit{Independence in topological and
$C^*$-dynamics}, Math. Ann., {\bfseries 338}, (2007), 869--926.


\bibitem {KN} H. Keynes and D. Newton, \textit{Real prime flows}, Trans. Amer. Math.
Soc., {\bf 217}(1976), 237--255.

\bibitem{KO12} D.~Kwietniak and P.~Oprocha, \textit{On weak mixing, minimality and weak disjointness of all iterates}, Ergod. Th. and Dynam. Sys., {\bf 32}(2012), 1661--1672.

\bibitem{Lehrer} E. Lehrer, \textit{Topological mixing and uniquely ergodic
systems}, Israel J. Math. 57 (1987), no. 2, 239--255.


\bibitem{Mc} D. C. McMahon, \textit{Relativized weak disjointness and relatively
invariant measures}, Trans. Amer. Math. Soc., {\bf 236} (1978),
225--237.


\bibitem{OW} D. Ornstein and B. Weiss, \textit{Mean distality and
tightness}, Tr. Mat. Inst. Steklova 244 (2004), Din. Sist. i Smezhnye Vopr. Geom., 312--319; translation in Proc. Steklov Inst. Math. 2004, no. 1(244), 295--302

\bibitem{Qiao} Y. Qiao, \textit{Topological complexity, minimality and the system of order two on torus}, Sci. China Math. {\bf 59}(2016), 503--514.

\bibitem{SY} S. Shao and X. Ye, \textit{Regionally proximal relation of order $d$ is an equivalence one for minimal systems and a combinatorial consequence},  Adv. Math., {\bf 231} (2012), 1786--1817.

\bibitem{Vr} J. de Vries, \textit{Elements of Topological Dynamics}, Kluwer Academic Publishers ({\bf 993}), Dordrecht.


\bibitem{Weiss} B. Weiss, \textit{Multiple recurrence and doubly minimal systems}, Contemp. Math., 215,
189-196,  Amer. Math. Soc., Providence, RI, 1998.

\bibitem{V68} W. A. Veech, \textit{The equicontinuous structure relation for
minimal Abelian transformation groups}, Amer. J. Math., {\bf
90}(1968), 723--732.

\bibitem{V77} W. A. Veech, \textit{Topological systems}, Bull. Amer. Math. Soc.,
83(1977), 775-830.

\bibitem{Z} T. Ziegler, \textit{Universal characteristic factors and Furstenberg averages}. J. Amer. Math. Soc., {\bf 20} (2007), 53--97.

\bibitem{Zy} A. Zygmund, \textit{Trigonormetric series}, 2nd edition, University press, Cambridge, 1959.

\end{thebibliography}
\end{document}